

%

\documentclass[aop,preprint,usenames,dvipsnames]{imsart} 
\usepackage{amsthm,amsmath,amsfonts,amssymb}
\usepackage{mathdots}
\usepackage{graphicx}
\usepackage{bm}
\usepackage{mathtools}
\usepackage{type1cm}
\usepackage{latexsym}
\usepackage{mathrsfs}
\usepackage{dsfont}
\usepackage{bbm}
\usepackage{soul}
\usepackage{epic}
\usepackage{tikz}
\usetikzlibrary{automata, positioning}
\usepackage{enumitem}
\usepackage[normalem]{ulem}
\usepackage{xcolor}
\usepackage[top=3.cm, bottom=3cm, outer=3.5cm, inner=3cm]{geometry}

\RequirePackage[colorlinks,citecolor=blue,urlcolor=blue]{hyperref}


\startlocaldefs

\newtheorem{Theorem}{Theorem}[section]
\newtheorem{Lemma}[Theorem]{Lemma}

\newtheorem{Prop}[Theorem]{Proposition}
\newtheorem{Rem}[Theorem]{Remark}




\def\ge{\geqslant}
\def\le{\leqslant}



\def\cC{\mathscr{C}}

\def\cE{\mathscr{E}}
\def\cF{\mathscr{F}}

\def\cL{\mathscr{L}}

\def\cS{\mathscr{S}}

\def\cX{\mathscr{X}}


\def\fX{\mathfrak{X}}


\def\C{\mathbb{C}}
\def\bbD{\mathbb{D}}
\def\Erw{\mathbb{E}}
\def\F{\mathbb{F}}

\def\N{\mathbb{N}}

\def\Prob{\mathbb{P}}

\def\R{\mathbb{R}}

\def\Z{\mathbb{Z}}


\def\sfp{\mathsf{p}}  \def\sfP{\mathsf{P}}

\def\sfm{\mathsf{m}}


\def\eps{\varepsilon}
\def\vph{\varphi}
\def\vth{\vartheta}

\def\ups{\upsilon}


\def\1{\mathbf{1}}
\def\3{{\ss}}

\def\wh{\widehat}

\def\ovl{\overline}

\def\sign{\textsl{sign}}

\def\diag{\textsl{diag}}
\def\bfi{\mathbf{i}}


\def\IRg{\R_{\scriptscriptstyle >}}
\def\IRge{\R_{\scriptscriptstyle\geqslant}}

\def\IRle{\R_{\scriptscriptstyle\leqslant}}

\def\dualxi{\wh{\xi}}

\def\dualups{\wh{\ups}}
\def\dualzeta{\wh{\zeta}}
\def\dualPsi{\wh{\Psi}}
\def\dualP{\wh{P}}
\def\dualsfP{\wh{\sfP}}
\def\dualp{\wh{p}}
\def\dualsfp{\wh{\sfp}}
\def\dualu{\wh{u}}
\def\dualv{\wh{v}}
\def\dualA{\wh{A}}
\def\dualB{\wh{B}}

\def\dualD{\wh{D}}

\def\dualF{\wh{F}}
\def\dualPi{\wh{\itPi}}
\def\dProb{\wh{\Prob}}
\def\dErw{\wh{\Erw}}

\def\dualR{\wh{R}}
\def\dualS{\wh{S}}

\def\dualXi{\wh{\Xi}}
\def\dualcF{\wh{\cF}}

\newcommand{\itPi}{\mathit{\Pi}}

\newcommand{\e}{{\mathrm{e}}}

\def\ALIFS{\textsf{ALIFS}}

\def\GOU{\textsf{GOU}}

\def\MAP{\textsf{MAP}}
\def\MMLIFS{\textsf{MMLIFS}}
\def\MMOU{\textsf{MMOU}}
\def\MMGOU{\textsf{MMGOU}}
\def\MRW{\textsf{MRW}}

\def\Cplus{C^{\scriptscriptstyle +}}
\def\Ciplus{C_{i}^{\scriptscriptstyle +}}
\def\Cminus{C^{\scriptscriptstyle -}}
\def\Ciminus{C_{i}^{\scriptscriptstyle -}}


\def\cblue{\color{blue}}

\def\cblack{\color{black}}




\newcommand{\ZZ}{\mathbb{Z}}
\newcommand{\CC}{\mathbb{C}}
\newcommand{\FF}{\mathbb{F}}
\newcommand{\diff}{\operatorname{d}\hspace{-1pt}}

\allowdisplaybreaks[4]

\begin{document}

\begin{frontmatter}

\title{Tail behavior of Markov-modulated generalized Ornstein-Uhlenbeck processes}
\runtitle{Tails of Markov-modulated generalized Ornstein-Uhlenbeck processes}


\begin{aug}
 \author{\fnms{Gerold}~\snm{Alsmeyer}\thanksref{t1}\corref{}\ead[label=e1]{gerolda@math.uni-muenster.de}}
  \and
  \author{\fnms{Anita}~\snm{Behme}
  \corref{}\ead[label=e2]{anita.behme@tu-dresden.de}}
  \runauthor{G.~Alsmeyer and A.~Behme}
  \affiliation{University of M\"unster, Technical University of Dresden}
\thankstext{t1}{G.~Alsmeyer was partially funded by the Deutsche Forschungsgemeinschaft (DFG) under Germany's Excellence Strategy EXC 2044--390685587, Mathematics M\"unster: Dynamics--Geometry--Structure.}

\address{G.~Alsmeyer\\Inst.~Math.~Stochastics\\ Department
of Mathematics\\ and Computer Science\\ University of M\"unster\\ Orl\'eans-Ring 10\\ D-48149 M\"unster, Germany\\
          \printead{e1}\\
          }

\address{A.~Behme\\Inst.~Math.~Stochastics \\
\& \\
Center for Scalable Data Analytics \\and Artificial Intelligence (ScaDS.AI) \\
Technical University of Dresden\\
D-01062 Dresden, Germany\\
           \printead{e2}}
\end{aug}

\begin{abstract}
We study the tail behavior of Markov-modulated generalized Ornstein-Uhlenbeck processes -- that is, solutions to Langevin-type stochastic differential equations driven by a background continuous-time Markov chain. To this end, we consider a sequence of Markov modulated 
random affine functions $ \Psi_{n} : \mathbb{R} \to \mathbb{R} $, $ n \in \mathbb{N} $, and the associated iterated function system defined recursively by $ X_0^x := x $ and $ X_{n}^x := \Psi_{n-1}(X_{n-1}^x) $ for $ x \in \mathbb{R} $, $n \in \mathbb{N}$. We analyze the tail behavior of the stationary distribution of such a Markov chain using tools from Markov renewal theory. Our approach extends Goldie's implicit renewal theory~\cite{Goldie:91} and can be seen as an adaptation of Kesten's work on products of random matrices~\cite{Kesten:73} to the one-dimensional setting of random affine function systems. These results have applications in diverse areas of applied probability, including queueing theory, econometrics, mathematical finance, and population dynamics.
\end{abstract}


\begin{keyword}[class=MSC2020]
\kwd[Primary ]{60H25}
\kwd[; secondary ]{60F15, 60K15}
\end{keyword}

\begin{keyword}
\kwd{iterated function system}
\kwd{asymptotically linear}
\kwd{stationary distribution}
\kwd{tail behavior}
\kwd{Markov renewal theory}
\kwd{generalized Ornstein-Uhlenbeck process}
\kwd{exponential functional}
\end{keyword}

\end{frontmatter}


\section{Introduction}

Langevin-type stochastic differential equations (SDEs) of the form
\begin{align}\label{MMGOUSDE}
\diff V_{t}\ =\ V_{t-}\,\diff U_{t}\,+\,\diff L_{t},\qquad t\ge 0, 
\end{align}
are widely studied in the applied stochastics literature because they cover a large range of common models in various areas. Assuming that $(U,L)$ is a bivariate L\'evy process on $\R^{2}$ such that $\Delta U> -1$, i.e. $U$ a.s. has no jumps of size less or equal to $-1$, the solution to \eqref{MMGOUSDE} is known as a \emph{generalized Ornstein-Uhlenbeck (\GOU) process}. This type of process is a generalization of the classical Ornstein-Uhlenbeck process which, historically, has been introduced by the eponymous authors as a model in physics in the article \cite{OrnsteinUhlenbeck:1930} from 1930. In financial mathematics, special instances of \GOU\ processes include the classical Vasicek interest rate model \cite{Vasicek:77} and several prominent volatility models \cite{BarndorffShephard:01, CKluLM:04, MallerMuellerSzimayer}. In the actuarial sciences, \GOU\ processes are used as risk models with investment structure  \cite{Paulsen:93, GjessingPaulsen:97}. Recently, Langevin-type SDEs have also been proposed in machine learning as a continuous-time approximation of stochastic gradient descent (SGD) algorithms \cite{Mandtetal:16, Umutetal:19}.

\vspace{.1cm}
In all these applications it is of paramount importance to be able to quantify the tail behavior of the resulting process and/or its stationary distribution. Therefore, this problem has been investigated in various special cases \cite{CKluLM:06, GjessingPaulsen:97} as well as in the general L\'evy-driven setting \cite{BehmeAAP:11, AristaRivero:23}, where stationary solutions to \eqref{MMGOUSDE} are also known as \emph{exponential functionals}.

\vspace{.1cm}
A key tool to tackle this question used in these works is that solutions to \eqref{MMGOUSDE} appear as so-called \emph{perpetuities} after discretization, that is, solutions of random affine recursions of the form
\begin{equation} \label{eq-basicrre} 
R_{n+1}\,=\,A_{n} R_{n}\,+\,B_{n}, \quad n\in \N,
\end{equation} 
for an i.i.d. sequence $(A_{n},B_{n})_{n\in\N}$. Tails of stationary solutions to \eqref{eq-basicrre} in the i.i.d.~case have been considered in the seminal works by Kesten \cite{Kesten:73} and Goldie \cite{Goldie:91}, who proved that - under suitable conditions - perpetuities in the stationary regime exhibit Pareto-like tails, which is also referred to as power-law behavior. As shown in \cite{LindnerMaller05} and \cite{BehmeAAP:11}, this behavior carries over to the continuous-time setting and - as stated above - it is of significant interest in applications. In the afore-mentioned actuarial context of a ruin model with investment, for example, the polynomial decay rates of certain exponential functionals imply slow decay of the ruin probability \cite[Prop. 3.1]{GjessingPaulsen:97} as function of the initial capital.

\vspace{.1cm}
In the present work, the case when $(U_{t},L_{t})_{t\ge 0}$ is modulated by a continuous-time Markov process $(J_{t})_{t\ge 0}$ will be addressed regarding the question how the modulation affects the tail behavior of a stationary solution to \eqref{MMGOUSDE}.
This means that $(J_{t},(U_{t},L_{t}))_{t\ge 0}$ forms a \emph{Markov-additive process (\MAP)} with additive component $(U_{t},L_{t})_{t\ge 0}$ in $\R^2$ such that again $\Delta U> -1$, 
and driving process $(J_{t})_{t\ge 0}$, which is here supposed to have finite state-space~$\cS$. 

Generally speaking, Markov modulation is a standard tool in many application areas to achieve greater flexibility in modeling. In the financial context Markov switching models have been used for about 40 years \cite{Hamilton:89,Hamiltonbook} by now, with Markov switching interest rates as well as Markov switching volatility models still being active topics of research  \cite{BenSaida:15, FinkCzado:17, BehmeMMCOGARCH:25}. Likewise, Markov-modulated risk models first appeared in the 1980s \cite{Janssen:80, Reinhard:84, AsmussenCKlu:94}, and Markov-modulated risk models with investments that solve an SDE of type \eqref{MMGOUSDE} have now been studied for about 15 years \cite{BaeuerleKoetter:07, SalahMorales, Ramsden:17, BSidMMGOU}. In particular, and in analogy to the non-modulated case \cite[Thm. 3.2]{Paulsen:93}, it is shown in \cite[Thm. 4.2]{BSidMMGOU} that the ruin probability of a Markov-modulated risk model with investment solving \eqref{MMGOUSDE} can be expressed in terms of a Markov-modulated exponential functional. Thus again the decay rates of exponential functionals determine the decay rates of ruin probabilities in the considered models. 
In the rather young and extremely active area of machine learning, first studies using Markov switching versions of SGD have appeared in \cite{SunSunYin:18, Evan:23}, and \eqref{MMGOUSDE} with $|\cS|>1$ may serve as an approximation for such SGD algorithms in the future.

\vspace{.1cm}
In the Markov-modulated setting, diffusion-type solutions to \eqref{MMGOUSDE} and those with jumps are called \emph{Markov-modulated Ornstein-Uhlenbeck (\MMOU) processes} and \emph{Markov-modula\-ted generalized Ornstein-Uhlenbeck (\MMGOU) processes}, respectively. For \MMOU\ processes, $U$ is supposed to be a Markov-modulated drift process and $L$ a Markov-modulated Brownian motion with drift. This class was studied e.g. in \cite{Bardetetal:10, LindskogMajumder:20, Huangetal:16}, the first two references regarding stationarity, ergodicity and also tail behavior, the last one regarding moments. There is also recent work on \MMGOU\ processes, see \cite{BSidMMGOU, BDTS}, but none of these consider tail behavior. On the other hand, the occurrence of jumps is clearly a necessary feature in applications, and the present work therefore aims to fill this gap.

\vspace{.1cm}
In principle, the results by Goldie and Kesten are still applicable in the Markov-modulated setting: Considering the return times $\tau_{n}(j), n\in\N$ of the Markov chain $J$ to an arbitrary state $j\in\cS$, one can easily check that any solution to \eqref{MMGOUSDE} fulfills the random recursive equation 
\begin{equation}\label{eq:MMGOUdiscretecoarse} V_{\tau_{n}(j)}\,=\, A_{n} V_{\tau_{n-1}(j)}\,+\, B_{n}, \end{equation}
with i.i.d.~coefficients $(A_{n},B_{n})$, $n\in\N$. This fact has been applied in \cite{LindskogMajumder:20} for the diffusion case. However, subsampling at return times does not provide a deeper insight into the contributions of different regimes to the tail behavior: The resulting conditions for Pareto-tails depend on integrals of the arising \MAP s over a cycle marked by two consecutive return epochs, which typically can hardly be re-expressed in terms of the characteristics of the given process. Therefore, a better understanding of the tail behavior of a stationary solution to \eqref{MMGOUSDE} calls for an extension of the Kesten-Goldie implicit renewal theorem \cite{Kesten:73,Goldie:91} to random affine recursions in a Markovian environment. Such extensions have so far only been considered and applied in some special cases. We mention \cite{Roitershtein:07, Saporta:05,Bardetetal:10} and especially the recent article \cite{AlsBroBur:23}, where implicit renewal theory for a two-state Markovian environment is developed and then used to determine the stationary tail behavior of so-called asymptotically linear iterated function systems (\ALIFS) on the line. The latter are characterized by being asymptotically equivalent to possibly distinct random affine functions at the two ends of the real line, thus at $\pm\infty$. Motivated by the fact that an environment of only two states is too restrictive for our purposes, this work provides an implicit renewal theorem for random affine recursions in a general finite Markovian environment, stated as Theorem \ref{thm:main_{1}}. We apply Theorem~\ref{thm:main_{1}} to a finer discretization of \eqref{MMGOUSDE}, as compared to the coarser scheme in \eqref{eq:MMGOUdiscretecoarse}, which yields precise conditions for the emergence of Pareto-like tails in Markov-modulated exponential functionals. These results will be presented in Theorem~\ref{thm:main_2}. The proof of Theorem~\ref{thm:main_{1}}, in turn, builds on and extends techniques developed in~\cite{AlsBroBur:23}.

\section{Main results}\label{sec:maincont}

In this section, we state our main result, Theorem~\ref{thm:main_2}, on the tail behavior of stationary solutions to the SDE \eqref{MMGOUSDE} for a \MAP\ $(J, (U, L))$. As indicated earlier, the proof is based on a discretization argument combined with Theorem~\ref{thm:main_{1}}. The proof of Theorem~\ref{thm:main_2} is therefore deferred to the final Section~\ref{sec:proof thm main_2}.

\vspace{.1cm}
\emph{Setup.} Let $\cS$ be a finite set and assume we are given a filtered probability space $(\Omega,\cF,\F,\Prob)$, where $\F = (\cF_t)_{t \ge 0}$ is a filtration of sub-$\sigma$-fields of $\cF$ satisfying the usual conditions of completeness and right-continuity. Let $(J,X) = (J_t, X_t)_{t \ge 0}$ be an $\F$-adapted Markov process taking values in $\cS \times \R^d$, with $d \ge 1$. We define $\Prob_j(\cdot) := \Prob(\cdot\,|\,J_0 = j)$, as usual, and let $\Erw_j[\cdot]$ denote the corresponding expectation operator. Then $(J,X)$ is called a \emph{($d$-dimensional) Markov-additive process (\MAP) with respect to $\F$} if
\begin{align}\label{MAPdefinition}
\Erw\left[f(X_{s+t}-X_s)g(J_{s+t})|\cF_s\right] \ =\ \Erw_{J_s}\left[f(X_t - X_0)g(J_t)\right]
\end{align}
for all $s, t \ge 0$, and all bounded measurable functions $f : \R^d \to \R$, $g : \cS\to\R$. Since $\cS$ is finite, it follows in this case that a family of independent $\R^d$-valued L\'evy processes $\{X^{(j)} : j \in \cS\}$ can be defined such that, whenever $J_t = j$ on an arbitrary interval $(t_{1}, t_2)$, the increments $(X_t - X_{t_{1}})_{t_{1} < t < t_2}$ of the additive component of $(J,X)$ are distributed as $(X_t^{(j)} - X_{t_{1}}^{(j)})_{t_{1} < t < t_2}$; see \cite[Chapter~XI.2]{Asmussen:03}.\\
Whenever the continuous-time background process $J$ jumps at some time $T_{n}$ from state $i$ to state $j$, it induces an additional jump $\Delta X_{T_{n}} = Z_{X,n}^{ij}$ in $X$, where the distribution $G_{\!X}^{ij}$ depends only on the pair $(i,j)$ -- not on the jump time or index -- and is independent of all other randomness.\\ 
Given any c\`adl\`ag process $Y$, we denote by $Y_{t-}$ the left-hand limit of $Y$ at time $t > 0$ (with the convention $Y_{0-} := Y_0$), and define the jump size at time $t$ by $\Delta Y_t := Y_t - Y_{t-}$. We assume throughout that $X_0 = 0$ almost surely for any \MAP. For further background on \MAP s, we refer to \cite{Asmussen:03}, \cite[Appendix]{DDK2017}, or \cite{BDTS}.

\vspace{.3cm}
Given a bivariate \MAP\ $(J,(\zeta,\eta)) = (J_{t}, (\zeta_{t}, \eta_{t}))_{t \ge 0}$, the \emph{Markov-modulated generalized Ornstein-Uhlenbeck (\MMGOU) process driven by $(J,(\zeta,\eta))$} has been introduced in~\cite{BSidMMGOU} as the process
\begin{align}\label{MMGOUexplicit}
V_{t}\ =\ \e^{-\zeta_{t}}\left( V_{0} + \int_{(0,t]} \e^{\zeta_{s-}}\,\diff \eta_s \right), \quad t \ge 0,
\end{align}
where the initial value $V_0$ and the process $(J_t, (\zeta_t, \eta_t))_{t \ge 0}$ are conditionally independent given $J_0$. It has further been shown there that $(V_t)_{t \ge 0}$ forms the unique solution to the SDE~\eqref{MMGOUSDE} for a bivariate \MAP\ $(J, (U, L)) = (J_t, (U_t, L_t))_{t \ge 0}$ that is uniquely determined by $(J, (\zeta, \eta))$ via 
\begin{equation}\label{eq-ULviaxieta}
	\begin{pmatrix}
		U_t \\ L_t
	\end{pmatrix}\ =\  
	\begin{pmatrix}
		-\zeta_t + \frac12 \int_{(0,t]} \sigma_\zeta^2(J_s)\,\diff s + \sum_{0<s\leq t} \left( \Delta \zeta_s + \e^{-\Delta \zeta_s} - 1 \right) \\[.3em]
		\eta_t -\int_{(0,t]} \sigma_{\zeta,\eta}(J_s)\,\diff s +\sum_{0<s\leq t} \left(\e^{-\Delta\zeta_s}-1\right) \Delta\eta_s
	\end{pmatrix}, \quad t\ge 0,
\end{equation}
see~\cite[Eq.\,(2.24)]{BSidMMGOU} or \cite[Eq.~(4.1)]{BDTS}. In particular, the resulting process satisfies $\Delta U > -1$.

\vspace{.1cm}
Under the assumption that the background driving Markov chain $J$ is ergodic with stationary distribution $\pi$, necessary and sufficient conditions for the existence of a stationary distribution of the \MMGOU\ process \eqref{MMGOUexplicit} are given in \cite[Thm. 3.3]{BSidMMGOU}. To state these, note that we write $\Prob_\pi[\cdot] := \sum_{i \in \cS} \pi_{i} \Prob_{i}[\cdot]$, as is standard.
Further, $(\wh{J}, (\wh{\zeta}, \wh{L}))$ denotes the dual of the Markov-additive process $(J, (\zeta, L))$, and we define $\wh{\Prob}_\pi [\cdot] := \sum_{i \in \cS} \pi_{i} \wh{\Prob}_{i} [\cdot]$ via $\wh{\Prob}_{i} [\cdot] := \Prob[\cdot\,|\,\wh{J}_0 = i]$, such that in particular
\begin{align*}
&\text{the law of }\big(J_{(t-s)-},\, \zeta_{(t-s)-} - \zeta_t,\, \eta_{(t-s)-} - \eta_t \big)_{0 \leq s \leq t} \text{ under } \Prob_\pi \\
&\hspace{3cm}\text{ is equal to the law of } \big(\wh{J}_s,\, \wh{\zeta}_s,\, \wh{\eta}_s \big)_{0 \leq s \leq t} \text{ under } \wh{\Prob}_\pi,
\end{align*}
see e.g.\ \cite[Appendix A.2]{DDK2017}. Then by \cite[Thm. 3.3]{BSidMMGOU}, the \MMGOU\ process admits a nontrivial stationary distribution if and only if the integral $\int_{(0,t]} \e^{\wh{\zeta}_{s-}}\,\diff \wh{L}_s$ converges in $\wh{\Prob}_\pi$-probability as $t \rightarrow \infty$ to some finite-valued random variable $V_{\infty}$. Moreover, in this case the stationary distribution of the \MMGOU\ process under $\Prob_\pi$ is uniquely determined as the distribution of the \emph{exponential functional}
\begin{equation} \label{eq_defVinfty}
V_{\infty}\ =\ - \int_{(0,\infty)} \e^{\wh{\zeta}_{s-}}\,\diff \wh{L}_s\ =\ \wh{\Prob}_\pi\text{\,-}\lim_{t \to \infty} \left( - \int_{(0,t]} \e^{\wh{\zeta}_{s-}}\,\diff \wh{L}_s \right).
\end{equation}
Necessary and sufficient conditions for convergence of integrals of the form \eqref{eq_defVinfty} in terms of the characteristics of the appearing processes have been derived in \cite{BS_ExpFuncMAP}.
\\
In the case the convergence in \eqref{eq_defVinfty} fails, either no stationary distribution of the \MMGOU\ pro\-cess exists, or it is degenerate in the sense that there exists a sequence of constants $\{c_j,\, j \in \cS\}$ such that $V_t = c_{J_t}$ $\Prob_\pi$-a.s.\ for all $t \geq 0$, and $\pi_j = \Prob_\pi(V_t = c_j)$. As outlined in the proof of \cite[Thm. 3.3]{BSidMMGOU}, the latter behaviour appears if and only if $\wh{\Prob}_\pi$-almost surely 
$$ \int_{(0,t]} \e^{\wh{\zeta}_{s-}}\,\diff \wh{L}_s\ =\ c_{\wh{J}_0} + c_{\wh{J}_t}\e^{\wh{\zeta}_t}, $$
which in turn is equivalent to an explicit dependency of the driving processes of the form
\begin{equation} \label{eq-MMGOUdegenerate} L_t = -\int_{(0,t]} c_{J_s-} \diff U_s - \int_{(0,t]}\diff c_{J_s}, \quad \Prob_\pi\text{-a.s.},\end{equation}
see \cite[Prop. 4.7]{BS_ExpFuncMAP}.  We will exclude this case from our studies.

\vspace{.3cm}

In order to formulate our result on the tail behavior of the stationary distribution of the process $(V_t)_{t \geq 0}$, we denote the intensity matrix of $J$ by $Q = (q_{ij})_{i,j \in \cS}$. Further, we define
\begin{equation} \label{eq:def.psi.lap.tran}
\Psi_\zeta(w)\ =\ \diag\left[\psi_j(w): j \in \cS\right] + Q^\top \circ\left(\Erw\left[\e^{w Z^{jk}_{\zeta,1}}\right]\right)_{j,k \in \cS}^\top\ \in\ \CC^{|\cS| \times |\cS|},  
\end{equation}
for all $w \in \CC$ such that the right-hand side exists. This quantity is known as the \emph{matrix exponent} of the \MAP\ $(J, \zeta)$; see, e.g., \cite[Prop. XI.2.2]{Asmussen:03} or \cite{DDK2017}.
Here and in the following, $\diag[a_j : j \in \cS]$ denotes the diagonal matrix with entries $a_j$ for $j = 1, \dots, |\cS|$; $\top$ denotes matrix transposition; and ``$\circ$'' denotes the elementwise (Hadamard) product of matrices. Moreover, $\psi_j(w) := \log \Erw[\e^{w \zeta^{(j)}_{1}}]$ is the Laplace exponent of the L\'evy process $\zeta^{(j)}$. As described in \cite[Sec. A.1]{DDK2017}, the matrix exponent determines the joint Laplace transform of the additive component $\zeta$ and the modulating chain $J$. Specifically,
\begin{equation} \label{eq:lap.tran.map}
\Erw_{i}\Big[\e^{w \zeta_t} \mathds{1}_{\{J_t = j\}}\Big]\ =\ e_j^\top \e^{t \Psi_\zeta(w)} e_{i}, \quad \text{for all } i, j \in \cS,
\end{equation}
where $e_j$ denotes the $j$-th unit vector.
A closely related quantity that will also be important is the matrix $\Upsilon_\zeta(\theta)$, defined for $\theta \ge 0$ by
\begin{equation} \label{eq_upsilon}
\Upsilon_\zeta^{ij}(\theta)\ :=\ \Erw_{i}\Big[\e^{-\theta \zeta_{T_{1}}} \mathds{1}_{\{J_{T_{1}} = j\}}\Big] 
\ =\ q_{ij} \int_0^\infty e_j^\top \e^{t \Psi_\zeta(-\theta)} e_{i} \, \e^{-q_{ij} t} \,\,\diff t, \quad i, j \in \cS,
\end{equation}
where $T_{1}$ denotes the time of the first regime switch, i.e.,
$$
T_{1}\,:=\,\inf\{t > 0 : J_t \neq J_{t-}\}.
$$

Our main result on the tail behavior of the stationary distribution is the following. Its proof is provided in Section~\ref{sec:proof thm main_2}, and it builds upon an extension of Goldie's implicit renewal theorem to the Markov-modulated setting, which will be stated as Theorem~\ref{thm:main_{1}} in the next section.

\begin{Theorem}\label{thm:main_2}
Let $(J,(\zeta,\eta))$ be a bivariate \MAP\ such that the modulating process $J$ has finite state space $\cS$ and is ergodic with stationary distribution $\pi$. Assume that $\eta\not\equiv 0$  and there exists no sequence $\{c_j,\,j\in\cS\}$ such that \eqref{eq-MMGOUdegenerate} holds. 
If all Levy processes $\zeta^{(j)}$ are of finite variation with zero drift, additionally assume that either there exists $j\in\cS$ such that the jump distribution of $\zeta^{(j)}$ is nonlattice or that $Z_\zeta^{ij}$ is nonlattice for at least one pair $(i,j)\in\cS^2$ with $q_{ij}>0$. Suppose further that the following conditions hold:
\begin{itemize}[leftmargin=1.2cm]\itemsep3pt
\item[\textsf{(A1) }]
There exists $\kappa > 0$ such that $\Upsilon_\zeta(\kappa)$ has dominant eigenvalue $1$ and is irreducible;

\item[\textsf{(A2) }] 
For all $i, j \in \cS$, the derivative
$$
\frac{\partial}{\partial \theta} \Upsilon_\zeta^{ij}(\theta)\Big|_{\theta=\kappa}\ <\ \infty;
$$

\item[\textsf{(A3) }] 
$\Erw_\pi[|\eta_{1}|^{\kappa \vee 1}]\ <\ \infty$;

\item[\textsf{(A4) }] 
There exists $\varepsilon > 0$ such that for all $i, j \in \cS$ with $q_{ij} > 0$,
$$
\Erw\left[\e^{-((\kappa \vee 1) + \varepsilon) Z^{ij}_{\zeta,1}}\right]\ <\ \infty,
\quad \text{and} \quad 
\int_0^\infty \Erw\left[\sup_{s \le t} \e^{((\kappa \vee 1) + \varepsilon) |\zeta^{(j)}_s|} \right] \e^{q_{jj} t}\,\diff t\ <\ \infty.
$$
\end{itemize}
Then there exists a nondegenerate random variable $V_0$, which is conditionally independent of $(J, (\zeta, \eta))$ given $J_0$, such that the \MMGOU\ process defined in \eqref{MMGOUexplicit}, driven by $(J, (\zeta, \eta))$ and initialized at $V_0$, is strictly stationary. Furthermore, there exist finite constants $C_{i}^+, C_{i}^-$ such that
\begin{gather*}
\lim_{x \to \infty} x^\kappa \, \Prob_{i}[V_0 > x]\,=\,C_{i}^+,
\quad\text{and}\quad
\lim_{x \to \infty} x^\kappa \, \Prob_{i}[V_0 < -x]\,=\,C_{i}^-,
\end{gather*}
 with $C_i^+ + C_i^->0$  for all $i \in \cS$.
\end{Theorem}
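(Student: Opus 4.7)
The plan is to sample the \MMGOU\ process at the jump epochs $T_0:=0<T_1<T_2<\cdots$ of the background chain $J$ and then apply Theorem~\ref{thm:main_{1}} to the resulting random affine recursion. From the explicit representation \eqref{MMGOUexplicit} one obtains
$$
V_{T_{n+1}}\,=\,A_{n}\, V_{T_n}\,+\,B_{n}, \qquad n\ge 0,
$$
with $A_{n}:=\e^{-(\zeta_{T_{n+1}}-\zeta_{T_n})}$ and $B_{n}:=\int_{(T_n,T_{n+1}]}\e^{-(\zeta_{T_{n+1}}-\zeta_{s-})}\,\diff\eta_{s}$. By the Markov-additive structure, $((J_{T_n}, A_{n}, B_{n}))_{n\ge 0}$ is a Markov-modulated sequence of random affine coefficients, driven by the ergodic embedded chain $(J_{T_n})_{n\ge 0}$ inherited from $J$. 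This is precisely the type of recursion covered by Theorem~\ref{thm:main_{1}}.

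Next, I would translate conditions (A1)--(A4) into the hypotheses of Theorem~\ref{thm:main_{1}}. Formula \eqref{eq_upsilon} shows $\Upsilon_\zeta^{ij}(\kappa)=\Erw_{i}[A_1^\kappa \mathds{1}_{\{J_{T_1}=j\}}]$, so (A1) is exactly the Cram\'er--Markov condition for $\log A_1$, and (A2) provides the finiteness of $\Erw_{i}[A_1^\kappa \log A_1\mathds{1}_{\{J_{T_1}=j\}}]$ needed to identify the sign and finiteness of the derivative at $\kappa$. Conditions (A3) and (A4), combined, yield the required $\kappa$-moment and extended $(\kappa\vee 1+\varepsilon)$-moment bounds on $|B_{1}|$. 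Indeed, conditioning on $J_0=j$ makes $T_1\sim\text{Exp}(-q_{jj})$ independent of the L\'evy piece $\zeta^{(j)}$ driving $\zeta$ on $(0,T_1]$, and then a maximal inequality for the stochastic integral against $\eta^{(j)}$ together with the regime-switch jump bound gives
$$
\Erw_{j}\big[|B_{1}|^{\kappa\vee 1+\varepsilon}\big]\,\lesssim\,\Erw_\pi[|\eta_1|^{\kappa\vee 1}]\cdot\!\int_0^\infty\Erw\!\left[\sup_{s\le t}\e^{(\kappa\vee 1+\varepsilon)|\zeta^{(j)}_s|}\right]\e^{q_{jj}t}\,\diff t\,+\,(\text{regime-switch terms}),
$$
all of which are controlled by (A3)--(A4). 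The non-lattice hypothesis stated in the theorem translates directly into the non-lattice requirement on the law of $\log A_{1}$ along the modulated walk, and the exclusion of \eqref{eq-MMGOUdegenerate} corresponds to the non-degeneracy of the induced affine fixed point for the embedded chain.

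Theorem~\ref{thm:main_{1}} then delivers a unique stationary distribution for the discrete-time chain $(V_{T_n}, J_{T_n})_{n\ge 0}$ together with the power-law asymptotics
$$
\lim_{x\to\infty}x^\kappa\,\Prob_{i}[V_{T_n}>x]\,=\,C_{i}^+,\qquad \lim_{x\to\infty}x^\kappa\,\Prob_{i}[V_{T_n}<-x]\,=\,C_{i}^-,
$$
under stationarity, with $C_{i}^+ + C_{i}^- > 0$ for at least one $i$ and, by irreducibility of $\Upsilon_\zeta(\kappa)$ assumed in (A1), for every $i\in\cS$. To pass from the embedded chain back to the continuous-time process, I would invoke \cite[Thm.~3.3]{BSidMMGOU}: the Cram\'er condition (A1), together with the moment controls (A3)--(A4), implies that the integral in \eqref{eq_defVinfty} converges in $\wh\Prob_\pi$-probability, so the \MMGOU\ process admits a (unique, non-degenerate) stationary distribution. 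Initializing $V_0$ according to this distribution makes $(V_t)_{t\ge 0}$ strictly stationary, hence $V_0\stackrel{d}{=}V_{T_n}$ under $\Prob_i$ for every $n$, and the announced tail asymptotics for $V_0$ follow.

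The principal technical hurdle will be the moment verification in the second step: controlling $|B_{1}|$ uniformly over the random exponential horizon $(0,T_1]$ in a way that reproduces exactly the thresholds $\kappa\vee 1$ and $\kappa\vee 1+\varepsilon$ in (A3)--(A4), and doing so without losing integrability when the L\'evy component is of finite variation with zero drift (which is precisely when the additional non-lattice assumption is invoked). A secondary subtlety is to verify that the non-degeneracy hypothesis of Theorem~\ref{thm:main_{1}}, which rules out state-dependent constants $(d_{j})_{j\in\cS}$ satisfying $B_{n}=d_{J_{T_n}}-A_{n}\,d_{J_{T_{n+1}}}$, is equivalent, via the identification \eqref{eq-ULviaxieta} and the computation in \cite[Prop.~4.7]{BS_ExpFuncMAP}, to the excluded degenerate coupling \eqref{eq-MMGOUdegenerate}.
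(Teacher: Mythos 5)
Your discretization at the jump epochs $T_n$ of $J$, the identification of the MMLIFS coefficients $(A_n,B_n)$ from \eqref{MMGOUexplicit}, and the translation of \textsf{(A1)}, \textsf{(A2)} into \textsf{(B1)}, \textsf{(B2)} are exactly the paper's approach, as is the final appeal to Theorem~\ref{thm:main_{1}} combined with \cite[Thm.~3.3]{BSidMMGOU}.

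The step that would break, as you have written it, is the moment verification. You aim to show $\Erw_j[\,|B_{1}|^{\kappa\vee 1+\varepsilon}\,]<\infty$, but Theorem~\ref{thm:main_{1}}\textsf{(B2)} only requires $\Erw|B_{1}|^{\kappa}<\infty$, and \textsf{(A3)} only provides $\Erw_\pi|\eta_1|^{\kappa\vee 1}<\infty$ with no $\varepsilon$-slack; the displayed bound pairing a factor $\Erw_\pi[|\eta_1|^{\kappa\vee 1}]$ with a $(\kappa\vee 1+\varepsilon)$-moment on the left does not balance exponents, and no $+\varepsilon$-moment of $B_{1}$ can be extracted from \textsf{(A3)}. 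The remark after Theorem~\ref{thm:main_2} flags this as precisely the delicate point: the paper invokes the moment inequality \cite[Thm.~3.5]{BDTS} to obtain $\Erw_\pi|B_{1}|^{\kappa}<\infty$ from $\Erw_\pi|\eta_1|^{\kappa\vee 1}<\infty$ together with the $(\kappa\vee 1+\varepsilon)$-sup-exponential control on $\zeta$ over $(0,T_1]$, placing all the $\varepsilon$-slack on the $\zeta$-side. In addition, your lift back to continuous time elides real work: the paper derives $\Erw\zeta_{T_1}>0$ from \textsf{(B1)}, passes to $\Erw\zeta_1>0$ via a generalized Wald identity, and then verifies the explicit log-moment criterion \eqref{eq:intcriterion} of \cite{BS_ExpFuncMAP} to get a.s.\ convergence of the exponential functional — merely citing \cite[Thm.~3.3]{BSidMMGOU} does not close this. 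Finally, Theorem~\ref{thm:main_{1}} requires a doubly infinite stationary sequence $(\xi_n,A_n,B_n)_{n\in\Z}$; the paper constructs it by adjoining the dual MAP $(J^{*},(\zeta^{*},\eta^{*}))$ to produce the negative-index coefficients, a step your sketch omits.
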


\begin{Rem}\rm
The limiting constants $C_{i}^\pm$ in Theorem~\ref{thm:main_2} can be expressed explicitly in terms of the derivative of the leading eigenvalue of $\Upsilon_\zeta(\kappa)$, its associated right eigenvectors, and certain moments of the stationary distribution. See \eqref{eq:def Ciminus} and \eqref{eq:def Ciplus} for the precise formulas.
\end{Rem}

\begin{Rem}\label{rem:DeltaU}\rm
As shown in \cite{BSidMMGOU}, the SDE \eqref{MMGOUSDE} can also be solved in the more general case $\Delta U \neq -1$, with solution given by
\begin{equation} \label{MMGOUgeneralized} 
V_t\ =\ \cE(U)_t \left( V_0\,+\, \int_{(0,t]} \cE(U)_{s-}^{-1}\,\diff \eta_s \right), \quad t \geq 0,
\end{equation}
where $\cE(U)$ denotes the stochastic exponential of $U$, defined as the solution to the SDE $\diff\cE(U)_t = \cE(U)_{t-} \, \diff U_t$ with initial condition $\cE(U)_0 = 1$. The stochastic exponential of a semimartingale -- such as the additive component of a \MAP\ -- can be expressed explicitly; see, e.g., \cite[Thm. II.37]{Protterbook}. From this representation, it is immediate that any jump of $U$ of size less than $-1$ causes a sign change in $\cE(U)$. In terms of perpetuities (i.e., time discretizations of solutions to \eqref{MMGOUSDE}), this corresponds to random affine recursions of the form $R_{n} = A_{n} R_{n-1} + B_{n}$, $n\in\N$, with $\Prob(A_{n} < 0) > 0$. This setting is also covered by Theorem~\ref{thm:main_{1}} in the discrete-time case. The application to the process given by \eqref{MMGOUgeneralized} can be carried out in direct analogy with the proof of Theorem~\ref{thm:main_2}; however, we omit the details here in order to keep the presentation accessible.
\end{Rem}

\begin{Rem}\rm
A related result on the tail behavior of the classical generalized Ornstein-Uhlenbeck process (corresponding to the special case $|\cS| = 1$) is given in \cite[Thm. 4.5]{LindnerMaller05}; see also \cite[Thm. 4.1]{BehmeAAP:11} for a generalization in the spirit of Remark~\ref{rem:DeltaU} and under refined assumptions. Applying Theorem~\ref{thm:main_2} in this setting recovers the known results, but under a slightly weaker condition on the moments of $\eta$: we only require assumption \textsf{(A3)}, whereas \cite{BehmeAAP:11} assumes $\Erw|\eta_{1}|^{(\kappa + \varepsilon) \vee 1} < \infty$. This relaxation is possible due to new moment bounds for stochastic integrals derived in \cite{BDTS}.
\end{Rem}

\section{An implicit renewal theorem in finite Markovian environment}\label{sec:maindiscrete}

The Kesten-Goldie implicit renewal theorem (\cite[Thm.\,5]{Kesten:73} and \cite[Thm. 2.3]{Goldie:91}) characterizes the stationary tail behavior of iterations of i.i.d.~random functions $\Psi_{n}(x)$ on the real line under suitable conditions, a key one being that $\Psi_{n}(x)$ is approximately linear as $|x| \to \infty$, in an appropriate sense. The purpose of this section is to extend the Kesten-Goldie result to the setting of real-valued random affine linear functions $A_{n} x + B_{n}$ evolving in a Markovian environment.

\vspace{.1cm}
\emph{Setup}. Let $\Xi=(\xi_{n})_{n\in\Z}$ be a doubly infinite stationary and irreducible Markov chain with finite state space $\cS$, transition matrix $P=(p_{ij})_{i,j\in\cS}$ and stationary law $\pi=(\pi_{i})_{i\in\cS}$. We consider a sequence of random affine linear maps 
$$ \Psi_{n}(x)\ =\  A_{n}x+B_{n},\quad n\in\Z,\ x\in\R, $$ 
such that $(A_{n}, B_{n})_{n\in\Z}$ is taking values in $\R^{2}$ and modulated by $\Xi$. This means that, conditioned upon $\xi_{k}=i_{k}$ for $k\in\Z$ and arbitrary $i_{0},i_{\pm 1},i_{\pm 2}\ldots\in\cS$,
\begin{itemize}[leftmargin=.6cm]\itemsep2pt
\item the $(A_{n},B_{n})$ are conditionally independent,
\item the conditional law of $(A_{n},B_{n})$ is homogeneous in $n$ and only dependent on $(i_{n-1},i_{n})$, i.e. 
$$ \Prob\big[(A_{n},B_{n})\in\cdot\,|\xi_{n-1}=i_{n-1},\,\xi_{n}=i_{n}\big]\ =\ F_{i_{n-1}i_{n}} $$ 
for all $n\in\Z$ and a stochastic kernel $F$ from $\cS^{2}$ to $\R^{2}$, where $F_{ij}$ is used as shorthand for $F((i,j),\cdot)$.
\end{itemize}

The \emph{Markov-modulated linear iterated function system (\MMLIFS)} associated with the  se\-quence $(\Psi_{n})_{n\ge 0}$ of random affine linear functions, and with driving chain $(\xi_{n})_{n\ge 0}$, is defined by $(\xi_{n},R_{n})_{n\ge 0}$, where
\begin{gather}\label{eq:MIFS}
R_{n}\ :=\ \Psi_{n}(R_{n-1})\ =\ \Psi_{n}\cdots\Psi_{1}(R_{0})\quad\text{for }n=1,2,\ldots
\end{gather}
and $\Psi_{n}\cdots\Psi_{1}$ is used as shorthand for the composition $\Psi_{n}\circ\ldots\circ\Psi_{1}$. It constitutes a temporally homogeneous Markov chain having transition kernel
$$ \Gamma((i,r),\{j\}\times\cdot)\ :=\ p_{ij}\,\Prob\big[A_{1}r+B_{1}\in\cdot\,|\xi_{0}=i,\xi_{1}=j\big]\quad\text{a.s.} $$
for all $i,j\in\cS$ and $r\in\R$, provided that $R_{0}$ is measurable with respect to $\cF_{0}$, where
$$ \cF_{n}\,:=\,\sigma((\xi_{k},A_{k},B_{k});k\le n)\quad\text{for }n\in\Z $$
is the natural filtration induced by $(\xi,A,B)$.
Notice that the conditional law of $(\xi_{n},A_{n},B_{n})_{n\ge 1}$ given $(\xi_{0},R_{0})$ does only depend on $\xi_{0}$ and not on $R_{0}$, thus
\begin{equation*}
\Prob\big[(\xi_{n},A_{n},B_{n})_{n\ge 1}\in\cdot\,|\xi_{0}=i,R_{0}=r\big]\ =\ \Prob_{i}\big[(\xi_{n},A_{n},B_{n})_{n\ge 1}\in\cdot\big],
\end{equation*}
where $\Prob_{i}$ is used as common for $\Prob[\,\cdot\,|\xi_{0}=i]$ for $i\in\cS$ and $\Prob_{\lambda}$ for $\sum_{i\in\cS}\lambda_{i}\Prob_{i}$ if $\lambda=(\lambda_{i})_{i\in\cS}$ is a probability distribution on $\cS$.

\vspace{.2cm}
The $R_{n}$ are also called \emph{forward iterations} when opposed to the corresponding sequence of \emph{backward iterations}
\begin{gather}\label{backward sequence}
\Psi_{1}\cdots\Psi_{n}(R_{0})\ =\ \itPi_{n}R_{0}\ +\ \sum_{k=1}^{n}\itPi_{k-1}B_k,\quad n=1,2,\ldots,
\shortintertext{where}
\itPi_{0}\ :=\ 1\quad\text{and}\quad\itPi_{n}\ :=\ \begin{cases}
\hfill A_{1} A_{2}\cdot\ldots\cdot A_{n}&\text{if }n=1,2,\ldots\\
A_{0} A_{-1}\cdot\ldots\cdot A_{n+1}&\text{if }n=-1,-2,\ldots
\end{cases}\label{eq:def Pi(n)}
\end{gather}
If $R_{0}=0$, these iterations take the form
\begin{equation}\label{sequence}
\Psi_{1}\cdots\Psi_{n}(0)\ =\ \sum_{k=1}^{n}\itPi_{k-1}B_{k},\quad n=1,2,\ldots
\end{equation}
with obvious limiting random variable as $n\to\infty$ (if it exists)
\begin{equation}\label{eq:perp}
\dualR_{0}\ :=\ \sum_{k\ge 1}\itPi_{k-1}B_{k}.
\end{equation} 
The latter is called perpetuity due to its interpretation in insurance and  finance as a dis\-counted sum of perpetual payments. In the case when the $(A_{n},B_{n})$ are i.i.d., Goldie and Maller \cite[Thm.\,2.1]{GolMal:00} established necessary and sufficient conditions for $\dualR_{0}$ to be a.s.~finite and its law to be the unique stationary law of the Markov chain $(R_{n})_{n\ge 0}$. In the present situation of Markov modulated coefficients, corresponding results have been obtained in \cite{AlsBuck:17a}, but are complicated by the fact that in general the stationary law of $(R_{n})_{n\ge 0}$ is no longer the law of $\dualR_{0}$ but of a dual counterpart. In fact, the following has been shown by Brandt \cite{Brandt:86}: The choice
\begin{gather}\label{eq:stationary Rzero}
R_{0}\ :=\ \sum_{k\ge 0}\itPi_{-k}B_{-k}\ =\ B_{0}+A_{0}B_{-1}+A_{0}A_{-1}B_{-2}+\ldots
\end{gather}
renders stationarity of $(\xi_{n},R_{n})_{n\ge 0}$ if the right-hand sum in \eqref{eq:stationary Rzero} is a.s.~finite. In this case, a doubly infinite $(\cF_{n})_{n\in\Z}$-adapted stationary extension of the latter sequence can be explicitly defined in terms of the given stationary sequence $(\xi_{n},A_{n},B_{n})_{n\in\Z}$, namely
\begin{equation}
R_{n}\ :=\ \sum_{k\ge 0}A_{n}\cdots A_{n-k+1}B_{n-k}\quad\text{for }n\in\Z.\label{eq:stationary Rn}
\end{equation}
It is this stationary representation we will work with hereafter when studying the tail behavior of the stationary law of $(R_{n})_{n\ge 0}$. Let us point out that $R_{0}$, when formally defined by \eqref{eq:stationary Rzero}, is also a backward iteration limit and thus a perpetuity, but for the time-reversal $(\xi_{-n},A_{-n},B_{-n})_{n\ge 0}$, namely
\begin{equation}
R_{0}\ =\ \Psi_{0} \cdots\Psi_{-n}(R_{-n})\quad\text{for all }n\in\N_{0}.\label{eq2:stationary Rn}
\end{equation}
As a consequence,  the results from \cite{AlsBuck:17a} can be used to directly infer the a.s.~finiteness of $R_{0}$ under the conditions stated in our main results below.

\vspace{.2cm}
\emph{Nondegeneracy}. In the case of i.i.d.~$(A_{n},B_{n})$, it is well-known that the law of $R_{0}$ is nondegenerate iff
\begin{equation}\label{nondegenerate iid}
\Prob[A_{1}c+B_{1}=c\,]\,<\,1\quad\text{for all }c\in\R,
\end{equation}
see \cite[Rem.\,2.3]{GolMal:00}. For the Markov-modulated case, the corresponding condition has been provided in \cite[Subsect.\,2.4]{AlsBuck:17a}, namely
\begin{equation}\label{nondegenerate MMLIFS}
\Prob_{\pi}[A_{1}c_{\xi_{1}}+B_{1}=c_{\xi_{0}}\,]\,<\,1\quad\text{for any choice }c_{i}\in\R,\,i\in\cS.
\end{equation}
In the following, the \MMLIFS\,$(\xi_{n},R_{n})_{n\ge 0}$ associated with $(A_{n},B_{n})$ is called nondegenerate if this condition holds.

\vspace{.2cm}
We note that the definitions of $\Prob_{i}:=\Prob[\cdot\,|\,\xi_{0}=i]$ and 
$\Prob_{\lambda}:=\sum_{i\in\cS}\lambda_{i}\Prob_{i}$ for any probability law 
$\lambda=(\lambda_{i})_{i\in\cS}$ on $\cS$ remain in force for the doubly infinite sequence 
$(\xi_{n},A_{n},B_{n})_{n\in\Z}$. Moreover, we emphasize that stationarity under $\Prob_{\pi}$ holds 
not only for $(\xi_{n})_{n\in\Z}$ but also for the extended sequences 
$(\xi_{n},A_{n},B_{n})_{n\in\Z}$ and $(\xi_{n},R_{n})_{n\in\Z}$.

\vspace{.2cm}
The following result is the announced extension of the Kesten--Goldie implicit renewal theorem 
to \MMLIFS. In the form stated, the i.i.d.\ case for $(A_{n},B_{n})$ appears as 
Theorems 2.4.4 and 2.4.7 in the monograph by Buraczewski et al.\ \cite{BurDamMik:16}.

\begin{Theorem}\label{thm:main_{1}}
Let $(\xi_{n},A_{n},B_{n})_{n\in\Z}$ be a doubly infinite stationary sequence as introduced above, 
and $(\xi_{n},R_{n})_{n\ge 0}$ the \MMLIFS\ associated with $\Psi_{n}(x)=A_{n}x+B_{n}$ for 
$n\in\N_{0}$. Further assume
\begin{itemize}[leftmargin=1.2cm]\itemsep3pt
\item[\textsf{(B1)}] There exists $\kappa>0$ such that 
$\sfP(\kappa):=\big(\Erw_{i}[\,|A_{1}|^{\kappa}\1_{\{\xi_{1}=j\}}]\big)_{i,j\in\cS}$ 
has dominant eigenvalue $\rho(\kappa)=1$.
\item[\textsf{(B2)}] $\Erw|A_{1}|^{\kappa}\log|A_{1}|<\infty$ and $\Erw|B_{1}|^{\kappa}<\infty$.
\item[\textsf{(B3)}] 
$\Prob\big[\log|A_{1}|-a_{\xi_{1}}+a_{\xi_{0}}\in d\Z\,\big|\,A_{1}\ne 0\big]<1$ 
for any $d>0$ and any choice of real numbers $\{a_{i}:i\in\cS\}$.
\item[\textsf{(B4)}] $\Prob[B_{1}=0]<1$, hence 
$\min_{i,j\in\cS}F_{ij}(\IRg\times\{0\})<1$.
\end{itemize}
Then $-\infty\le\Erw\log|A_{1}|<0$, and $(R_{n})_{n\ge 0}$ has a unique stationary law and a 
doubly infinite extension as defined in \eqref{eq:stationary Rn}. Moreover, there exist nonnegative 
constants $\Ciminus$ and $\Ciplus$ (explicitly defined in Section \ref{sec:proof thm main_{1}}) such 
that the following assertions hold:

\vspace{.2cm}\noindent
(a) If $\Prob[A_{1}\ge 0]=1$, then
\begin{equation}\label{eq1:left-right tail nu}
\lim_{t\to\infty}t^{\kappa}\,\Prob_{i}[R_{0}>t]\ =\ \Ciplus,
\qquad
\lim_{t\to\infty}t^{\kappa}\,\Prob_{i}[R_{0}<-t]\ =\ \Ciminus
\quad\text{for all }i\in\cS.
\end{equation}

\noindent
(b) If $\Prob[A_{1}<0]>0$, then
\begin{equation}\label{eq2:left-right tail nu}
\lim_{t\to\infty}t^{\kappa}\,\Prob_{i}[R_{0}>t]\ =\ \lim_{t\to\infty}t^{\kappa}\,\Prob_{i}[R_{0}<-t]\ 
=\ \frac{\Ciminus+\Ciplus}{2}\quad\text{for all }i\in\cS.
\end{equation}
In both cases, $\Ciplus+\Ciminus>0$ if $(\xi_{n},R_{n})_{n\ge 0}$ is nondegenerate.
\end{Theorem}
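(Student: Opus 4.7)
The plan is to adapt Goldie's proof of the classical implicit renewal theorem to the Markov-modulated setting, following the blueprint laid down in \cite{AlsBroBur:23} for the two-state case but now requiring the full matrix-valued Perron--Frobenius machinery. First I would verify that $\Erw_\pi\log|A_{1}|<0$: by (B1) the dominant eigenvalue $\rho(\theta)$ of $\sfP(\theta)$ is log-convex in $\theta$ with $\rho(0)=1=\rho(\kappa)$, hence $(\log\rho)'(0+)=\Erw_\pi\log|A_{1}|\le 0$, and the lattice condition (B3) rules out the degenerate case $\rho\equiv 1$ on $[0,\kappa]$, yielding strict inequality. Combined with (B2) and (B4), the results of \cite{AlsBuck:17a} then give existence and uniqueness of the stationary law of $(\xi_{n},R_{n})_{n\ge 0}$ together with the doubly infinite representation \eqref{eq:stationary Rn}.

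The core analytic step is a Markov-modulated Esscher transform. Let $r=(r_{i})_{i\in\cS}$ denote the right Perron eigenvector of $\sfP(\kappa)$, which is strictly positive by the irreducibility inherited from $P$. I would introduce the twisted probability measure defined, on the natural filtration, by
\begin{equation*}
\frac{d\tilde\Prob_{i}}{d\Prob_{i}}\bigg|_{\cF_{n}}\ =\ \frac{r_{\xi_{n}}}{r_{\xi_{0}}}\,|\itPi_{n}|^{\kappa},
\end{equation*}
which is a genuine probability by the eigenvector identity $\Erw_{i}[|A_{1}|^\kappa r_{\xi_{1}}]=r_{i}$. Under $\tilde\Prob$, $(\xi_{n})_{n\ge 0}$ remains an ergodic Markov chain on $\cS$ and $(\log|\itPi_{n}|)_{n\ge 0}$ becomes a Markov random walk with strictly positive mean increment equal to $(\log\rho)'(\kappa)>0$, finite by (B2). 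The lattice condition (B3) is precisely the non-arithmeticity hypothesis required by the Markov renewal theorem, as it forbids $\log|A_{1}|$ from being concentrated on a lattice modulo a state-dependent shift.

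Starting from the stationary identity $R_{0}=A_{0}R_{-1}+B_{0}$, in which $R_{-1}$ is conditionally independent of $(A_{0},B_{0},\xi_{0})$ given $\xi_{-1}$, I would apply Goldie's telescoping trick, writing $\Prob_{i}[R_{0}>t]-\Prob_{i}[A_{0}R_{-1}>t]=\Prob_{i}[A_{0}R_{-1}+B_{0}>t]-\Prob_{i}[A_{0}R_{-1}>t]$, then multiplying by $t^\kappa$ after the substitution $t\leadsto e^t$, and reorganizing into a Markov renewal equation of the form
\begin{equation*}
r_{i}\,e^{\kappa t}\,\Prob_{i}[R_{0}>e^t]\ =\ \sum_{j\in\cS}\int_\R r_{j}\,e^{\kappa(t-s)}\,\Prob_{j}[R_{0}>e^{t-s}]\,\tilde Q_{ij}(ds)\,+\,g_{i}^+(t),
\end{equation*}
where $\tilde Q_{ij}$ is the image under $a\mapsto\log|a|$ of the twisted kernel governing the transition $i\to j$, and $g_{i}^+$ is a directly Riemann integrable forcing term whose integrability follows from $\Erw|A_{1}|^\kappa\log|A_{1}|<\infty$ and $\Erw|B_{1}|^\kappa<\infty$ via the same elementary tail bounds as in Goldie's i.i.d.\ computation. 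An analogous equation governs $\Prob_{i}[R_{0}<-e^t]$. In case (a), where all $A_{n}\ge 0$, the equations for the two tails decouple and the Markov renewal theorem of Kesten--Athreya--Ney yields the separate limits $\Ciplus$ and $\Ciminus$. In case (b), where $\Prob[A_{1}<0]>0$, the sign of $\itPi_{n}$ fluctuates and the natural renewal equation intertwines the two tails; a symmetrization argument, replacing $\Prob_{i}[R_{0}>\cdot]$ by $(\Prob_{i}[R_{0}>\cdot]+\Prob_{i}[R_{0}<-\cdot])/2$, brings it back into the form above and forces the right and left tail asymptotics to coincide, giving $(\Ciminus+\Ciplus)/2$ on both sides.

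It remains to establish positivity of $\Ciminus+\Ciplus$ under the nondegeneracy condition \eqref{nondegenerate MMLIFS}. If $\Ciplus+\Ciminus=0$ for some (hence all, by irreducibility of the twisted chain) $i\in\cS$, the tail of $|R_{0}|$ would decay strictly faster than $t^{-\kappa}$; unraveling this through the stationary series \eqref{eq:stationary Rn} and applying a Choquet--Deny type argument to the twisted Markov chain then forces a deterministic fixed-point solution $A_{1}c_{\xi_{1}}+B_{1}=c_{\xi_{0}}$, directly contradicting \eqref{nondegenerate MMLIFS}. The principal obstacle in the whole scheme will be Step 3: rigorously producing the Markov renewal equation in a form to which the Kesten--Athreya--Ney theorem applies, and in particular verifying direct Riemann integrability of the forcing functions $g_{i}^\pm$ uniformly in $i\in\cS$ while managing the sign fluctuations of case (b). This is precisely the place where the two-state ad hoc arguments of \cite{AlsBroBur:23} must be upgraded to a genuinely matrix-valued theory on an arbitrary finite state space.
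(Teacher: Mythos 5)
Your overall architecture — Esscher change of measure via the right Perron eigenvector of $\sfP(\kappa)$, passage to a Markov renewal equation for the exponentially tilted tail, and an appeal to the Markov renewal theorem — matches the paper's Case~1 closely, so that part is on the right track. The proposal departs from the paper, however, in the two places that actually carry the weight of the argument, and both departures contain genuine gaps.

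For case (b) you propose to symmetrize, i.e.\ to run the renewal argument on $\frac12(\Prob_i[R_0>e^t]+\Prob_i[R_0<-e^t])$. This does produce a legitimate Markov renewal equation (the signed contributions of $K^{\pm}_{ij}$ add up to the full $\kappa$-tilted kernel), so you obtain the limit of the \emph{sum} of the two tails. But you then assert that this ``forces the right and left tail asymptotics to coincide,'' which does not follow automatically: the renewal equation for the \emph{difference} $f_i^+-f_i^-$ has the signed kernel $e^{\kappa s}(K_{ij}^+-K_{ij}^-)(ds)$, and you must show separately that this defective, sign-alternating Markov renewal equation forces the difference to be $o(1)$. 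In the i.i.d.\ setting Goldie does this explicitly; in the Markov-modulated setting it requires a spectral-radius-strictly-less-than-one argument for a signed matrix kernel, which is nontrivial and unaddressed. The paper sidesteps the issue entirely by introducing the sign chain on $\cS\times\{-1,+1\}$ and subsampling at the stopping time $\sigma=\inf\{n\ge 1:\itPi_{-n}\ge 0\}$, thereby reducing Case~2 to Case~1 with only positive slopes and computing the constant via the occupation measure formula (Lemmata~\ref{lem:signed chain stationary law}, \ref{lem:sampling at positive sign}, \ref{lem:Isigma finite}). Your route could plausibly be completed, but as written it has a real hole exactly where you flag the ``principal obstacle.''

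The positivity step is a more serious gap. A ``Choquet--Deny type argument'' does not straightforwardly convert $\Ciplus+\Ciminus=0$ into a deterministic fixed point $A_1 c_{\xi_1}+B_1=c_{\xi_0}$; the tail having lighter-than-$t^{-\kappa}$ decay is not the kind of harmonicity statement Choquet--Deny machinery applies to, and you would have to supply a great deal of structure to make this run. The paper instead subsamples at the return times $\tau_n(i)$ of the driving chain to $i$, which turns the coefficients into i.i.d.\ pairs $(A^{(i)}_n,B^{(i)}_n)$; one then checks that the i.i.d.\ Kesten--Goldie hypotheses hold (using the occupation-measure identity and the measure change to get $\Erw_i|A_1^{(i)}|^{\kappa}=1$ and the $L\log L$ bound), and the nondegeneracy equivalence from \cite[Lemma 4.1, Prop.\,4.6]{AlsBuck:17a} transfers nondegeneracy of the \MMLIFS{} to nondegeneracy of each i.i.d.\ embedded recursion, so positivity is inherited from the classical result. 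You should either adopt this reduction or substantially flesh out the degeneracy argument.

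Two smaller omissions worth flagging: the paper uses \emph{geometric sampling} to reduce to the case where all entries of $P(\theta)$ and $p_{ij}^{\pm}(\theta)$ are strictly positive (condition \textsf{(B+)}), which is needed for the sign chain argument in Case~2; and it uses Goldie's \emph{exponential smoothing} (Lemma~\ref{lem:properties smoothing}) to upgrade $\Delta^{(\kappa)}(i,\cdot)$ from merely Lebesgue integrable to directly Riemann integrable before invoking the Markov renewal theorem. You mention direct Riemann integrability as a concern but do not indicate how to obtain it; without the smoothing step, the forcing term in your renewal equation will generally not be d.R.i.\ and the renewal theorem cannot be applied directly.
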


\begin{Rem}\label{rem:Pkappa irreducible}\rm
The matrix $\sfP(\kappa)$ defined in \textsf{(B1)} is irreducible. This follows from the irreducibility of the Markov chain $\Xi$ with transition matrix $P$, as assumed at the beginning of this section. Since the state space $\cS$ is finite and assumption \textsf{(B1)} ensures that $A_{1}$ does not vanish almost surely, there exists $\varepsilon > 0$ such that $\sfP(\kappa) \ge \varepsilon P$. Consequently, for all $n\in\N$, we have $\sfP(\kappa)^n \ge \varepsilon^n P^n$, which in turn implies the irreducibility of $\sfP(\kappa)$.
\end{Rem}

\begin{Rem}\rm \label{rem:stationarylaw}
 For the stationary law of $(R_{n})_{n\ge 0}$, thus 
$$ \Prob_\pi[R_{0}\in\cdot]\ =\ \sum_{i\in\cS}\pi_{i}\,\Prob_{i}[R_{0}\in\cdot\hspace{1pt}], $$
it follows directly from \eqref{eq1:left-right tail nu} that
\begin{gather}
\lim_{t\to\infty}t^{\kappa}\,\Prob_\pi[R_{0}>t]\,=\,\Cplus
\quad\text{and}\quad
\lim_{t\to\infty}t^{\kappa}\,\Prob_\pi[R_{0}<-t]\,=\,\Cminus,
\label{eq3:left-right tail nu}
\shortintertext{where}
C^{\pm}\ :=\ \sum_{i\in\cS}\pi_{i}\,C^{\pm}.\nonumber
\end{gather}
Similarly, \eqref{eq2:left-right tail nu} implies
\begin{equation}
\lim_{t\to\infty}t^{\kappa}\,\Prob_\pi[R_{0}>t]\,=\,\lim_{t\to\infty}t^{\kappa}\,\Prob_\pi[R_{0}<-t]\,=\,\frac{\Cminus+\Cplus}{2}.
\end{equation}
\end{Rem}

\begin{Rem}\label{rem:Lipschitz extension}\rm
Let $\cC^{*}(\R)$ denote the space of bounded Lipschitz functions $\phi$ on $\R$ that vanish in a neighborhood of the origin, and let $\cC_{-}^{*}(\R)$ and $\cC_{+}^{*}(\R)$ be the subspaces consisting of functions in $\cC^{*}(\R)$ that additionally vanish on $\IRge$ and $\IRle$, respectively. 

Since every $\phi \in \cC^{*}(\R)$ is almost everywhere differentiable with bounded derivative $\phi'$ that also vanishes on a neighborhood of the origin, say $[-\eps, \eps]$, integration by parts yields:
$$
\Erw_{i}\phi(t^{-1}R_{0})\ =\ \int_{\eps}^{\infty} \left( \phi'(-x)\, \Prob_{i}[R_{0} < -tx] + \phi'(x)\, \Prob_{i}[R_{0} > tx] \right) \diff x,
$$
for any $i \in \cS$. Again, as a direct consequence of \eqref{eq1:left-right tail nu}, we obtain:
\begin{align}\label{eq1:Lipschitz extension}
\begin{split}
\lim_{t \to \infty} t^{\kappa}\,\Erw_{i}\phi(t^{-1}R_{0})\ 
&=\ \int_{0}^{\infty} \frac{\Ciminus \phi'(-x) + \Ciplus \phi'(x)}{x^{\kappa}} \, \diff x \\
&=\ \kappa \int_{0}^{\infty}\frac{\Ciminus \phi(-x) + \Ciplus \phi(x)}{x^{\kappa+1}} \, \diff x,
\end{split}
\end{align}
where integration by parts renders the second identity. In the stationary case, as in Remark \ref{rem:stationarylaw},
\begin{equation}\label{eq2:Lipschitz extension}
\lim_{t\to\infty}t^{\kappa}\,\Erw_\pi \phi(t^{-1}x)\ =\ \kappa\int_{0}^{\infty}\frac{\Cminus\phi(-x)+\Cplus\phi(x)}{x^{\kappa+1}}\,\diff x
\end{equation}
follows as well. The left-hand integral may be viewed as the multiplicative convolution, say $\hat\phi\star\nu(t)$, of $\hat\phi(t):=\phi(t^{-1})$ and $\nu=\Prob[R_{0}\in\cdot]$ at $t\in\R\backslash\{0\}$.
\end{Rem}

\section{Prerequisites}

This section collects the main prerequisites required for the proof of Theorem~\ref{thm:main_{1}}.

\subsection{Time-reversal and duality}

The time-reversal and dual of the trivariate sequence $(\xi_{n}, A_{n}, B_{n})_{n \ge 0}$ is defined by
$$
(\dualxi_{n}, \dualA_{n}, \dualB_{n})_{n \in \Z}\,:=\,(\xi_{-n}, A_{-n}, B_{-n})_{n \in \Z},
$$
which again forms a stationary process, with $(\dualA_{n}, \dualB_{n})_{n \in \Z}$ modulated by the Markov chain $\dualXi := (\dualxi_{n})_{n \ge 0}$. However, the transition mechanism generally differs. Specifically:

\begin{itemize}[leftmargin=.6cm]\itemsep2pt
\item $\dualXi := (\dualxi_{n})_{n \ge 0}$ is the time-reversal of $\Xi$, with transition probabilities $\dualp_{ij}:=\pi_{j}p_{ji}/\pi_{i}$ and transition matrix $\dualP = (\dualp_{ij})_{i,j \in \cS}$.
\item The conditional law of $(\dualA_{n}, \dualB_{n})$ given $\dualXi$ depends on $(\dualxi_{n}, \dualxi_{n+1})$ rather than $(\dualxi_{n-1}, \dualxi_{n})$, and satisfies
$$ \dualF_{ij}\,:=\,\Prob\big[(\dualA_{n}, \dualB_{n}) \in \cdot \,\big|\, \dualxi_{n} = i,\, \dualxi_{n+1} = j\big]\,=\, F_{ji}, $$
for all $n \ge 0$ and $i,j \in \cS$, where $F$ is the stochastic kernel introduced in the previous section.
\end{itemize}
This shows that, strictly speaking, duality holds between the sequences $(\xi_{n}, A_{n+1}, B_{n+1})_{n \ge 0}$ and $(\dualxi_{n}, \dualA_{n}, \dualB_{n})_{n \ge 0}$, and that they are equal in law if and only if the driving chain $\Xi$ is time-reversible and the modulation is symmetric, in the sense that $F_{ij} = F_{ji}$ for all $i,j \in \cS$.

\vspace{.1cm}

Define $\dualPsi_{n}(x) := \dualA_{n-1} x + \dualB_{n-1}$ for $n \in \Z$. Then the dual of the \MMLIFS\ $(\xi_{n}, R_{n})_{n \ge 0}$ is the sequence $(\dualxi_{n}, \dualR_{n})_{n \ge 0}$ with
\begin{equation} \label{ifsf}
\dualR_{n}\ :=\ \dualPsi_{n}(\dualR_{n-1})\ =\ \dualPsi_{n} \cdots \dualPsi_{1}(\dualR_0), \quad n = 1, 2, \ldots
\end{equation}
The corresponding backward iterations are given by
\begin{equation*}
\dualPsi_{1} \cdots \dualPsi_{n}(0)\ =\ \sum_{k=0}^{n-1} \dualPi_k \dualB_k, \quad n = 1, 2, \ldots,
\end{equation*}
where $\dualPi_0 := 1$ and $\dualPi_{n} := \prod_{k=0}^{n-1} \dualA_k = \prod_{k=0}^{n-1} A_{-k}$ for $n \ge 1$. If these iterations converge, then their limit clearly equals $R_0$ as defined in \eqref{eq:stationary Rzero}, and the law of $R_0$ coincides with the stationary distribution of the process $(\dualR_{n})_{n \ge 0}$. Moreover, by an application of Brandt's result, the doubly infinite sequence $(\dualxi_{n}, \dualR_{n})_{n \in \Z}$ is stationary when
\begin{equation*}
\dualR_{n}\ :=\ \sum_{k \ge 0} \dualA_{n} \cdots \dualA_{n-k+1}\dualB_{n-k}\ =\ \sum_{k\ge 0} A_{-n} \cdots A_{-n+ k-1} B_{-n+k}
\end{equation*}
for all $n \in \Z$. Note that the definition of $\dualR_0$ coincides with \eqref{eq:perp}. 

Summarizing, let $\ups$ and $\dualups$ denote the stationary distributions of the forward and dual sequences $(\xi_{n}, R_{n})_{n \ge 0}$ and $(\dualxi_{n}, \dualR_{n})_{n \ge 0}$, respectively. Then,
\begin{equation}
\ups\,=\,\cL\left(\xi_0, \sum_{n \ge 1} \dualPi_{n-1} \dualB_{n}\right)\quad\text{and}\quad \dualups\,=\,\cL\left(\xi_0, \sum_{n \ge 1} \itPi_{n-1} B_{n}\right),
\end{equation}
where $\cL(X)$ denotes the law of the random variable $X$.

\subsection{Induced Markov random walk and its dual}

Define $S_0 := 0$ and $S_{n} := \log|\itPi_{n}|$ for $n \ge 1$. Then the sequence $(\xi_{n}, S_{n})_{n \ge 0}$ forms a Markov random walk (\MRW), as the increments $\zeta_{n} := S_{n} - S_{n-1} = \log|A_{n}|$ for $n \ge 1$ are modulated by $\Xi$: they are conditionally independent given $\Xi$ and have temporally homogeneous conditional distributions,
\begin{align*}
\Prob(\zeta_{n} \in D\,|\,\Xi)\ =\ F_{\xi_{n-1}, \xi_{n}}(\pm \e^{D} \times \R),
\end{align*}
for measurable $D \subset \R$ and all $n\in\N$, where $\pm \e^D := \{ \pm \e^x : x \in D \}$.
Equivalently, the bivariate process $\Xi^\star := (\xi_{n}, \zeta_{n})_{n \ge 0}$, with $\zeta_0 := 0$ (for definiteness), defines a Markov chain on $\cS \times \R$ with transition kernel
\begin{equation*}
Q(i, \{j\} \times D)\,:=\, \Prob_{i}[\xi_{1} = j, \log|A_{1}| \in D]\,=\, p_{ij} F_{ij}(\pm \e^D \times \R).
\end{equation*}
Hence, the law of $(\xi_{n}, \zeta_{n})$ conditional on the past depends only on $\xi_{n-1}$.
Moreover, $\Xi^\star$ has stationary distribution $\pi^\star$ given by
\begin{equation*}
\pi^\star(\{j\} \times D)\ =\ \Prob_\pi [\xi_{1}=j,\log|A_{1}|\in D]\ =\ \sum_{i \in \cS} \pi_{i} p_{ij} F_{ij}(\pm \e^D \times \R).
\end{equation*}

The \MRW\ $(\xi_{n}, S_{n})_{n \ge 0}$ plays a central role in our analysis. Its renewal-theoretic properties -- after a suitable change of measure $\Prob \rightsquigarrow \wh{\Prob}$ -- are closely linked to the tail behavior of the stationary distribution of $(R_{n})_{n \ge 0}$, in line with Goldie's implicit renewal approach. Our extension addresses the case where the random walk is modulated by a finite Markov chain; see also \cite{AlsBroBur:23}. The measure change is detailed in Subsection~\ref{subsec:measure change}.

\vspace{.2cm}
Using \eqref{eq:def Pi(n)}, we can extend $(\xi_{n}, S_{n})_{n \ge 0}$ to a doubly infinite sequence by setting
\begin{equation*}
S_{n}\,:=\, \log|\itPi_{n}| \quad \text{for } n = -1, -2, \ldots,
\end{equation*}
and then define its time-reversed (dual) process as
$$ (\wh{\xi}_{n}, \wh{S}_{n})_{n \in \Z}\,:=\,(\xi_{-n}, S_{-n})_{n \in \Z}. $$
In particular, $\wh{S}_0 = 0$ and
$$ \wh{S}_{n}\,=\,\log|\itPi_{-n}|\,=\,\log|\wh{\itPi}_{n}|\,=\,\sum_{k=1}^n \wh{\zeta}_k\quad\text{for }n\in\N,
$$
where $\wh{\zeta}_{n} := \log|\wh{A}_{n-1}| = \log|A_{-n+1}|$. The associated Markov chain $(\wh{\xi}_{n}, \wh{\zeta}_{n})_{n \ge 0}$, with $\wh{\zeta}_0 := 0$, has transition kernel
\begin{equation*}
\wh{Q}(i, \{j\} \times D)\,=\,\wh{p}_{ij} \wh{F}_{ij}(\pm \e^D \times \R)\,=\,\frac{\pi_j p_{ji}}{\pi_{i}} F_{ji}(\pm \e^D \times \R),
\end{equation*}
for $i,j \in \cS$ and measurable $D \subset \R$.

\subsection{The Cram\'er transforms of $P$ and its dual $\dualP$}\label{subsec:Cramer transform}

The Cram\'er transforms of the transition matrix $P$ and its dual $\dualP$, along with their stochastic normalizations, play a central role in our analysis. In this subsection, we collect the relevant properties, focusing primarily on $P$, with comments on the necessary (but minor) modifications for $\dualP$ provided at the end.

\vspace{.1cm}
The \emph{Cram\'er transform} of $P$ is defined as the matrix $\sfP(\theta) = (\sfp_{ij}(\theta))_{i,j\in\cS}$, where
\begin{equation}\label{eq:Cramer transform of P}
\sfp_{ij}(\theta)\ :=\ \Erw_{i}\Big[|A_{1}|^{\theta}\1_{\{\xi_{1}=j\}} \Big] \ =\ \Erw_{i}\Big[\e^{\theta S_{1}}\1_{\{\xi_{1}=j\}} \Big]\ =\ p_{ij}\Phi_{ij}(\theta),
\end{equation}
for all $\theta \ge 0$, and where $\Phi_{ij}(\theta) := \Erw_{i}[\e^{\theta S_{1}}\,|\,\xi_{1} = j]$. 

\vspace{.1cm}
The entries of $\sfP(\theta)$ are finite if and only if $\Erw_{i}[\e^{\theta S_{1}}] < \infty$ for all $i\in\cS$, which (since $\cS$ is finite) is equivalent to $\Erw_{\pi}[\e^{\theta S_{1}}] < \infty$. The set of such $\theta\ge 0$ defines the canonical domain $\bbD$ of $\sfP(\theta)$. In fact, $\sfP(\cdot)$ admits a holomorphic extension to the complex strip
$$
\bbD_{\C} := \{ z \in \C : \Re(z) \in \bbD\},
$$
where $\Re(z)$ denotes the real part of $z$. If Assumption \textsf{(B1)} holds, then $[0, \kappa] \subset \bbD$.
Note that $\sfP(0) = P$, and each entry $\sfp_{ij}(\theta)$ is convex on $\bbD$, with derivative
\begin{equation}\label{eq:derivative Cramer transform of P}
\sfp_{ij}'(\theta)\ :=\ \Erw_{i}\left[\e^{\theta S_{1}} S_{1} \1_{\{\xi_{1} = j\}} \right] \ =\ p_{ij} \Phi_{ij}'(\theta), \quad i,j \in \cS.
\end{equation}
Furthermore, for any $i,j\in\cS$,
\begin{equation}\label{eq:p(i,j) positive}
\sfp_{ij}(\theta)\,>\,0\text{ for some }\theta\in\bbD\quad\Longrightarrow\quad\sfp_{ij}(\theta)\,>\,0\text{ for all }\theta\in\bbD,
\end{equation}
and the same holds true for the entries $\sfp_{ij}^{n}(\theta)$ for any $n\ge 2$.

\vspace{.1cm}
Let $\rho(\theta)$ denote the dominant eigenvalue of $\sfP(\theta)$, and let $u(\theta) = (u_{i}(\theta))_{i\in\cS}$ and $v(\theta) = (v_{i}(\theta))_{i\in\cS}$ be corresponding positive left and right eigenvectors, respectively, normalized such that
$$ \sum_{i\in\cS} u_{i}(\theta)\ =\ \sum_{i\in\cS} u_{i}(\theta)v_{i}(\theta) = 1. $$
Under \textsf{(B1)} and by Remark~\ref{rem:Pkappa irreducible}, the matrix $\sfP(\kappa)$ is irreducible with $\rho(\kappa) = 1$. Hence, by \eqref{eq:p(i,j) positive}, $\sfP(\theta)$ for \emph{any} $\theta \in\bbD$ is irreducible as well, and $\rho(\theta) > 0$. It follows that the matrix $\rho(\theta)^{-1} \sfP(\theta)$ is irreducible and nonnegative, with maximal eigenvalue 1 and the same eigenvectors (up to scaling) as $\sfP(\theta)$. As shown in \cite[p.~360]{Alsmeyer:14}, such a matrix -- termed \emph{quasistochastic} there -- can be transformed into a proper irreducible stochastic matrix -- referred to as the \emph{normalized Cram\'er transform} -- via
\begin{equation}\label{eq:Phat(theta)}
P(\theta)\ :=\ (p_{ij}(\theta))_{i,j\in\cS}\ :=\ \frac{1}{\rho(\theta)} D(\theta)^{-1} \sfP(\theta) D(\theta) \ =\ \left( \frac{v_j(\theta) \sfp_{ij}(\theta)}{\rho(\theta)v_{i}(\theta)} \right)_{i,j\in\cS},
\end{equation}
where $D(\theta) := \diag[v_{i}(\theta) : i \in \cS]$. Note that $p_{ij}(\theta) > 0$ if and only if $\sfp_{ij}(\theta) > 0$.

\vspace{.1cm}
The unique stationary distribution of $P(\theta)$ -- which is also its unique normalized positive left eigenvector -- is given by
\begin{equation}\label{eq:pihat(theta)}
\pi(\theta)\ =\ D(\theta) u(\theta)\ =\ \big( u_{i}(\theta) v_{i}(\theta) \big)_{i \in \cS}.
\end{equation}

Let $\sfp_{ij}^{n}(\theta)$ and $p_{ij}^{n}(\theta)$ denote the $(i,j)$-entries of $\sfP(\theta)^n$ and $P(\theta)^n$, respectively. Then, using \eqref{eq:Phat(theta)}, we obtain the relation
\begin{equation}\label{eq:Phat^n(theta)}
p_{ij}^{n}(\theta)\ =\ \frac{v_j(\theta)\, \sfp_{ij}^{n}(\theta)}{\rho(\theta)^n\, v_{i}(\theta)},
\end{equation}
for all $n\in\N$ and $i,j \in \cS$.

\smallskip
The following lemma extends Lemmata~4.1 and 4.2 from \cite{AlsBroBur:23} to the more general setting considered here, where the modulating chain is not restricted to have only two states.

\begin{Lemma} \label{lem:Ptheta vs Lambda}
For any $\theta \in \bbD$ and $i,j \in \cS$, we have
\begin{gather}
\sfp_{ij}^{n}(\theta) \,=\, \Erw_{i}\big[\itPi_{n}^{\theta} \1_{\{\xi_{n}=j\}} \big] \,=\, \Erw_{i}\big[\e^{\theta S_{n}} \1_{\{\xi_{n}=j\}} \big], \label{eq: Ptheta vs Pi n} \\
\lim_{n \to \infty} \frac{1}{n} \log \sum_{j \in \cS} \sfp_{ij}^{n}(\theta) \ =\ \lim_{n \to \infty} \frac{1}{n} \log \Erw_{i}[\e^{\theta S_{n}}] \ =\ \log \rho(\theta). \label{eq:Lip vs rho}
\end{gather}
Moreover, the function $\theta \mapsto \log \rho(\theta)$ is continuous and convex on its canonical domain $\bbD$, and smooth (i.e., infinitely differentiable) on the interior of $\bbD$.  As a direct consequence, if $\rho(\kappa)=1$ for some $\kappa>0$, then $\rho(\theta)<1$ for $0<\theta<\kappa$.
\end{Lemma}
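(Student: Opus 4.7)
My plan is to prove the four assertions of the lemma in turn, using Perron--Frobenius theory together with analytic perturbation theory and elementary convex analysis.

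First, to establish \eqref{eq: Ptheta vs Pi n}, I will proceed by induction on $n$. The case $n=1$ is the very definition of $\sfP(\theta)$ given in \eqref{eq:Cramer transform of P}, since $\itPi_{1}=A_{1}$ and $\e^{\theta S_{1}}=|A_{1}|^{\theta}$. For the inductive step, I condition on $(\xi_{n},S_{n})$ and exploit the Markov-additive structure: the conditional law of $(\xi_{n+1},\zeta_{n+1})$ depends only on $\xi_{n}$, and $S_{n+1}=S_{n}+\zeta_{n+1}$, so a one-step decomposition over the intermediate state yields the Chapman--Kolmogorov identity
\[
\Erw_{i}\big[\e^{\theta S_{n+1}}\1_{\{\xi_{n+1}=j\}}\big]\,=\,\sum_{k\in\cS}\sfp^{n}_{ik}(\theta)\,\sfp_{kj}(\theta)\,=\,\sfp^{n+1}_{ij}(\theta).
\]

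For \eqref{eq:Lip vs rho}, the main input is the Perron--Frobenius theorem. Since $\sfP(\theta)$ is nonnegative and, by Remark~\ref{rem:Pkappa irreducible} together with \eqref{eq:p(i,j) positive}, irreducible for every $\theta\in\bbD$, its dominant eigenvalue $\rho(\theta)$ is simple, strictly positive, and satisfies $\rho(\theta)^{-n}\sfP(\theta)^{n}\to v(\theta)u(\theta)^{\top}$ as $n\to\infty$ under the normalization used in \eqref{eq:pihat(theta)}. Hence $\sfp^{n}_{ij}(\theta)\sim v_{i}(\theta)u_{j}(\theta)\,\rho(\theta)^{n}$ with a strictly positive leading constant, and the row sum $\sum_{j}\sfp^{n}_{ij}(\theta)=\Erw_{i}[\e^{\theta S_{n}}]$ (which by Step~1 equals the moment generating function of $S_{n}$) shares the same exponential growth rate, giving the claimed Lyapunov-type identity.

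Convexity of $\log\rho$ is inherited from the previous two identities: for each fixed $i,n$, H\"older's inequality makes $\theta\mapsto\Erw_{i}[\e^{\theta S_{n}}]$ log-convex, so $n^{-1}\log\Erw_{i}[\e^{\theta S_{n}}]$ is convex, and its pointwise limit $\log\rho(\theta)$ is convex as well. Continuity on the interior of $\bbD$ is then automatic for any convex function, while continuity at the boundary of $\bbD$ follows from continuous dependence of a simple eigenvalue on the matrix entries. For smoothness on the interior, I would appeal to Kato's analytic perturbation theory: each entry $\sfp_{ij}(\theta)=p_{ij}\Erw_{i}[\e^{\theta S_{1}}\mid\xi_{1}=j]$ is holomorphic on the interior of the complex strip $\bbD_{\C}$, so $\sfP(\theta)$ is an analytic matrix-valued family there, and since $\rho(\theta)$ is a simple eigenvalue (by irreducibility and Perron--Frobenius), it depends real-analytically on~$\theta$ on the interior of~$\bbD$.

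Finally, the consequence $\rho(\theta)<1$ on $(0,\kappa)$ combines $\rho(0)=1$ (since $\sfP(0)=P$ is stochastic) with the hypothesis $\rho(\kappa)=1$. Convexity of $\log\rho$ on $[0,\kappa]$ at once yields $\rho\le 1$ there; equality at an interior point $\theta_{0}$ would force $\log\rho\equiv 0$ on $[0,\kappa]$ and, by the analyticity established above, on the whole interior of $\bbD$, a degeneracy that is incompatible with the non-lattice assumption \textsf{(B3)}. The principal technical point in this scheme is the invocation of Kato's theory for the analytic dependence of a simple eigenvalue on an analytic matrix parameter; the remaining steps -- the induction, the Perron--Frobenius comparison, and the convexity argument -- are routine.
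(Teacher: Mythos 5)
Your proof follows essentially the same strategy as the paper's: induction for \eqref{eq: Ptheta vs Pi n}, Perron--Frobenius asymptotics for \eqref{eq:Lip vs rho}, the H\"older/log-MGF argument for convexity, and analytic dependence of the simple dominant eigenvalue for smoothness (the paper cites the implicit function theorem via its Proposition~\ref{prop:PF eigenvalue smooth}, you cite Kato; these are interchangeable). Two points are worth flagging.

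First, your justification of \eqref{eq:Lip vs rho} relies on the convergence $\rho(\theta)^{-n}\sfP(\theta)^n \to v(\theta)u(\theta)^\top$, which holds only when $\sfP(\theta)$ is \emph{primitive}, i.e.\ irreducible \emph{and} aperiodic; for a periodic irreducible matrix you only get Ces\`aro convergence. The paper explicitly restricts to the aperiodic case ``for simplicity'' and remarks that the periodic case is handled by cyclic decomposition; you should make the same caveat, or else avoid the full spectral convergence altogether by using only the eigenvector identity $\sum_j \sfp_{ij}^n(\theta)\,v_j(\theta) = \rho(\theta)^n v_i(\theta)$, which gives matching upper and lower exponential bounds on the row sums without any aperiodicity hypothesis.

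Second, for the final assertion that $\rho(\theta)<1$ on $(0,\kappa)$, you correctly observe that convexity together with $\rho(0)=\rho(\kappa)=1$ only yields $\rho\le 1$, and that strict inequality requires ruling out $\log\rho\equiv 0$. The paper simply calls this a ``direct consequence'' without addressing the degenerate case, so your greater care here is welcome. Note, however, that \textsf{(B3)} is not among the hypotheses of this lemma, so invoking it makes your argument formally depend on an unstated assumption; a cleaner route, consistent with the lemma's standing, would be to prove $\rho\le 1$ on $[0,\kappa]$ and defer the strict inequality to the point in the paper where \textsf{(B3)} is actually in force.
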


\smallskip

To prove Lemma~\ref{lem:Ptheta vs Lambda}, we will use the following proposition concerning the analyticity of the dominant eigenvalue $\rho(\theta)$ of an analytic family of nonnegative matrices $\sfP(\theta)$, extended holomorphically to a complex domain. The result follows from the implicit function theorem for analytic maps, see e.g.\ \cite[Thm.\ 8.6]{KaupKaup:83}.

\begin{Prop}\label{prop:PF eigenvalue smooth}
Let $\bbD = \{ z \in \C : 0 < \Re(z) < \theta_0 \}$ for some $\theta_0 > 0$, and let $\{\sfP(z) : z \in \bbD\}$ be a holomorphic family of matrices such that $\sfP(\theta)$ is nonnegative with a positive (and hence simple) maximal eigenvalue $\rho(\theta)$ for each real $\theta \in \bbD$. Then the function $\theta \mapsto \rho(\theta)$ is infinitely differentiable on $(0, \theta_0)$.
\end{Prop}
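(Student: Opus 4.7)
The strategy is to view $\rho(\theta)$ as the locally unique root of the characteristic equation and then invoke the analytic implicit function theorem. Consider the characteristic polynomial
$$
\chi(z,\lambda)\ :=\ \det\bigl(\lambda I-\sfP(z)\bigr),\qquad (z,\lambda)\in\bbD\times\C,
$$
which is holomorphic jointly in $(z,\lambda)$, since $\{\sfP(z):z\in\bbD\}$ is a holomorphic family and the determinant is a polynomial in the matrix entries.

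Fix any $\theta^{*}\in(0,\theta_{0})$. By assumption, $\rho(\theta^{*})$ is a positive eigenvalue of the nonnegative matrix $\sfP(\theta^{*})$, and since it is simple, we have
$$
\chi(\theta^{*},\rho(\theta^{*}))\,=\,0\qquad\text{and}\qquad\partial_{\lambda}\chi(\theta^{*},\rho(\theta^{*}))\,\ne\,0.
$$
The analytic implicit function theorem \cite[Thm.\,8.6]{KaupKaup:83} then yields an open complex neighborhood $U\subset\bbD$ of $\theta^{*}$ and a holomorphic function $\lambda:U\to\C$ with $\lambda(\theta^{*})=\rho(\theta^{*})$ and $\chi(z,\lambda(z))\equiv 0$ on $U$.

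It remains to show that $\lambda(\theta)=\rho(\theta)$ for real $\theta$ in a neighborhood of $\theta^{*}$ (rather than tracking some other branch of eigenvalues). This follows from the fact that the simple isolated eigenvalue $\rho(\theta^{*})$ has positive distance from the rest of $\sigma(\sfP(\theta^{*}))$, combined with the continuity of the spectrum in the matrix entries: shrinking $U$ if necessary, we can separate $\lambda(\theta)$ from all other eigenvalues of $\sfP(\theta)$, and both $\theta\mapsto\lambda(\theta)$ and $\theta\mapsto\rho(\theta)$ are continuous branches of $\sigma(\sfP(\theta))$ passing through $\rho(\theta^{*})$ at $\theta^{*}$; hence they must coincide on a real neighborhood of $\theta^{*}$. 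Consequently $\rho$ agrees on $(0,\theta_{0})\cap U$ with the holomorphic function $\lambda$, and since $\theta^{*}$ was arbitrary, $\rho$ is real-analytic -- in particular infinitely differentiable -- on all of $(0,\theta_{0})$.

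The only delicate point is the branch identification in the last paragraph; once one has verified that the analytic continuation $\lambda$ does indeed track the Perron root $\rho$ and not some other nearby eigenvalue, the conclusion is immediate. All the rest is a routine application of the analytic implicit function theorem.
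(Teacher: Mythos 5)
Your proof is correct and follows essentially the same route as the paper, which simply cites the analytic implicit function theorem without elaboration. You have, usefully, spelled out the missing details: applying the implicit function theorem to the characteristic polynomial $\chi(z,\lambda)=\det(\lambda I - \sfP(z))$, using simplicity of the Perron root to verify $\partial_\lambda\chi\neq 0$, and — the only genuinely nontrivial step — the branch-tracking argument showing that the local holomorphic eigenvalue branch $\lambda(\theta)$ really is the spectral radius $\rho(\theta)$ for real $\theta$ near $\theta^*$. This last point is exactly right: simplicity and the finite spectrum give a uniform spectral gap near $\theta^*$, so the continuous branch through $\rho(\theta^*)$ stays strictly dominant and hence equals $\rho(\theta)$.
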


\begin{proof}[Proof of Lemma~\ref{lem:Ptheta vs Lambda}]
The identity in \eqref{eq: Ptheta vs Pi n} can be established by induction on $n$. For the base case $n = 1$, it follows directly from the definition of $\sfp_{ij}(\theta)$ in \eqref{eq:Cramer transform of P}. For the inductive step, assume the identity holds for some $n \ge 1$. Then
\begin{align*}
\sfp_{ij}^{n+1}(\theta)\ &=\ \sum_{k \in \cS} \sfp_{ik}(\theta) \sfp_{kj}^{n}(\theta)\ 
=\ \sum_{k \in \cS} \Erw_{i}\left[ \e^{\theta S_{1}} \1_{\{\xi_{1} = k\}} \right] \Erw_{k}\left[ \e^{\theta S_{n}} \1_{\{\xi_{n} = j\}} \right] \\
&=\ \sum_{k \in \cS} \Erw_{i}\left[ \e^{\theta S_{1}} \1_{\{\xi_{1} = k\}} \cdot \e^{\theta (S_{n+1} - S_{1})} \1_{\{\xi_{n+1} = j\}} \right] \\
&=\ \Erw_{i}\left[ \e^{\theta S_{n+1}} \1_{\{\xi_{n+1} = j\}} \right],
\end{align*}
which completes the induction.

\smallskip
To prove \eqref{eq:Lip vs rho}, we assume for simplicity that the modulating chain $(\xi_{n})_{n \ge 0}$, and hence $P$, is aperiodic. The extension to the periodic case via decomposition into cyclic classes is standard and omitted.

Under aperiodicity, the normalized Cram\'er transforms $P(\theta)$ are themselves aperiodic for all $\theta \in \bbD$. Thus, by the ergodic theorem for finite Markov chains, we have
$$ \lim_{n \to \infty} p_{ij}^{n}(\theta)\ =\ \pi_{j}(\theta)\ =\ u_{j}(\theta) v_{j}(\theta)\ >\ 0 $$
for all $i,j \in \cS$. Applying this to the representation in \eqref{eq:Phat^n(theta)} gives
\begin{align*}
\frac{1}{n} \log \sum_{j \in \cS} \sfp_{ij}^{n}(\theta)\ 
&=\ \log \rho(\theta) + \frac{1}{n} \log \sum_{j \in \cS} \frac{p_{ij}^{n}(\theta) v_{i}(\theta)}{v_j(\theta)} \\
&=\ \log \rho(\theta) + \frac{1}{n} \log \sum_{j \in \cS} \left( u_j(\theta) v_{i}(\theta) + o(1) \right) \\
&=\ \log \rho(\theta) + o(1),
\end{align*}
as $n\to\infty$, which proves \eqref{eq:Lip vs rho}.

\smallskip
Convexity of the function $\theta \mapsto \log \rho(\theta)$ follows directly from \eqref{eq:Lip vs rho} and standard properties of logarithmic moment generating functions (see also \cite[Cor.~2]{Saporta:05}). Smoothness of $\rho(\theta)$ on the interior of $\bbD$ is a classical result, and follows from Proposition~\ref{prop:PF eigenvalue smooth}.
\end{proof}

Turning to the dual transition matrix $\dualP$, it can easily be verified that the associated Cram\'er transform $\dualsfP(\theta)$ is related to $\sfP(\theta)$ by
$$
\dualsfP(\theta)\ =\ \itPi(\theta)^{-1}\sfP(\theta)^{\top}\itPi(\theta)\ =\ \bigg(\frac{\pi_{j}(\theta)\sfp_{ji}(\theta)}{\pi_{i}(\theta)}\bigg)_{i,j\in\cS},
$$
where $\itPi(\theta):=\diag[\pi_{i}(\theta):i\in\cS]$. The matrix $\dualsfP(\theta)$ has the same dominant eigenvalue $\rho(\theta)$ as $\sfP(\theta)$, but its left and right eigenvectors are different. Specifically,
\begin{gather}
\dualu(\theta)\ =\ \itPi(\theta)v(\theta)\ =\ (\pi_{i}(\theta)v_{i}(\theta))_{i\in\cS}\ =\ (u_{i}(\theta)v_{i}(\theta)^2)_{i\in\cS}, \label{eq: def uhat}
\shortintertext{and}
\dualv(\theta)\ =\ u(\theta)^{\top}\itPi(\theta)^{-1}\ =\ (v_{i}(\theta)^{-1})_{i\in\cS}, \label{eq:def vhat}
\end{gather}
under the normalization $\sum_{i\in\cS}\dualu_{i}(\theta) = \dualu(\theta)^{\top}\dualv(\theta) = 1$.

\smallskip

Analogous to \eqref{eq:Phat(theta)}, the stochastic normalization of the quasistochastic matrix $\rho(\theta)^{-1}\dualsfP(\theta)$ is given by
$$
\dualP(\theta)\ :=\ \frac{1}{\rho(\theta)}\dualD(\theta)^{-1}\dualsfP(\theta)\dualD(\theta)\ =\ \frac{1}{\rho(\theta)}\dualD(\theta)^{-1}\,\itPi(\theta)^{-1}\sfP(\theta)^{\top}\itPi(\theta)\,\dualD(\theta),
$$
where
$$
\dualD(\theta)\ :=\ \diag[\pi_{i}(\theta)^{-1}u_{i}(\theta):i\in\cS],
$$
so that
$$
\itPi(\theta)\dualD(\theta)\ =\ \diag[u_{i}(\theta):i\in\cS].
$$

\smallskip

Finally, since $\dualu_{i}(\theta)\dualv_{i}(\theta) = v_{i}(\theta)u_{i}(\theta) = \pi_{i}(\theta)$ for all $i\in\cS$, it follows that $\dualP(\theta)$ and $P(\theta)$ have the same stationary distribution $\pi(\theta)$.

\subsection{Measure change}\label{subsec:measure change}

Let $\FF = (\cF_{n})_{n \ge 0}$ be the natural filtration of $(\xi_{n}, A_{n}, B_{n})_{n \ge 0}$ and put $\cF_\infty := \sigma\{\cF_{n} : n \ge 0\}$.
Note that $(\xi_{n}, \zeta_{n})_{n \ge 0}$ and the \MRW\ $(\xi_{n}, S_{n})_{n \ge 0}$ are $\cF$-adapted.

Assuming \textsf{(B1)} -- and thus in particular $\rho(\kappa) = 1$ for some positive $\kappa \in \bbD$ -- the convexity of $\rho$ implies that $\rho(\theta) < 1$ for all $\theta \in (0, \kappa)$, $\rho'(0) < 0$, and $\rho'(\kappa) > 0$.  Recall from Remark~\ref{rem:Pkappa irreducible} that $\sfP(\kappa)$ is irreducible, and by \eqref{eq:p(i,j) positive} the same holds for each $\sfP(\theta)$, $\theta\in\bbD$. Hence, the stochastic normalization (see \eqref{eq:Phat(theta)})
$$ P(\theta)\ =\ D(\theta)^{-1}\sfP(\theta)D(\theta)\ =\ \Bigg(\frac{\sfp_{ij}(\theta)v_{j}(\theta)}{v_{i}(\theta)}\Bigg)_{i,j\in\cS} $$
is an irreducible transition matrix with unique stationary law $\pi(\theta)=(u_{i}(\theta)v_{i}(\theta))_{i\in\cS}$ (see \eqref{eq:pihat(theta)}) for all \emph{positive} $\theta \in \bbD$.

\vspace{.1cm}
For any such $\theta$ and $i \in \cS$, we have that
$$
\left(\e^{\theta S_{n}} v_{\xi_{n}}(\theta) \rho(\theta)^{-n} \right)_{n \ge 0}
$$
is a positive $\cF$-martingale under any $\Prob_s$, $s \in\cS$. Indeed, for any $i\in\cS$, on the event $\{\xi_{n} = i\} \in \cF_{n}$,
\begin{align*}
\Erw\bigg[\frac{e^{\theta S_{n+1}}v_{\xi_{n+1}}(\theta)}{\rho(\theta)^{n+1}}\bigg|\cF_{n}\bigg]
&= \frac{e^{\theta S_{n}}}{\rho(\theta)^{n+1}} \Erw_{i} \left[\e^{\theta S_{1}} v_{\xi_{1}}(\theta) \right] \\
&= \frac{e^{\theta S_{n}}}{\rho(\theta)^{n+1}} \sum_{j \in \cS} \sfp_{ij}(\theta) v_j(\theta) \\
&= \frac{e^{\theta S_{n}} v_{i}(\theta)}{\rho(\theta)^n} \sum_{j \in \cS} p_{ij}(\theta) \\
&= \frac{e^{\theta S_{n}} v_{i}(\theta)}{\rho(\theta)^n},
\end{align*}
where Lemma~\ref{lem:Ptheta vs Lambda} has been used in the second step, and \eqref{eq:Phat^n(theta)} in the third.

\vspace{.1cm}
Using this, we now define a new probability measure $\Prob_{i}^{(\theta)}$ on $(\Omega, \cF_\infty)$ as follows: for any $n \in \N$ and bounded measurable function 
$f : \cS \times (\cS \times \IRg \times \R)^n \to \R$, set
\begin{equation}\label{eq:def Ptheta_{i}}
\begin{split}
\Erw_{i}^{(\theta)}&\big[f(\xi_{0},(\xi_{1},A_{1},B_{1}),\ldots,(\xi_{n},A_{n},B_{n}))\big] \\
&:=\ \frac{1}{v_{i}(\theta)\rho(\theta)^{n}}\,\Erw_{i}\big[e^{\theta S_{n}}v_{\xi_{n}}(\theta)f(\xi_{0},
(\xi_{1},A_{1},B_{1}),\ldots,(\xi_{n},A_{n},B_{n}))\big].
\end{split}
\end{equation}

As stated in the next lemma, the process $(\xi_{n}, \zeta_{n})_{n \ge 0}$ remains a Markov chain on $\cS \times \R$ under the changed measure $\Prob_{i}^{(\theta)}$, but now with a modified transition kernel $Q^{(\theta)}$ given by
\begin{equation}\label{eq:Qtheta g}
Q^{(\theta)} g(i, x)\,:=\,\frac{1}{v_{i}(\theta)\rho(\theta)}\,\Erw_{i}\Big[e^{\theta \zeta_{1}} v_{\xi_{1}}(\theta) g(\xi_{1}, \zeta_{1})\Big]
\end{equation}
for all bounded functions $g: \cS \times \R \to \R$. Hence, the conditional law of $(\xi_{1}, \zeta_{1})$ given the initial state $(i, x)$ depends only on $i$ and not on $x$.

\begin{Lemma}\label{lem:Pdelta^kappa case 1}
For any $\theta\in\bbD$, the following assertions hold under the new probability measures $\Prob_{i}^{(\theta)}$, $i\in\cS$:
\begin{itemize}\itemsep2pt
\item[(a)] The sequence $(\xi_{n})_{n\ge 0}$ is an irreducible Markov chain on $\cS$ with transition matrix $P(\theta)$ as defined in \eqref{eq:Phat(theta)} and unique stationary distribution $\pi(\theta)$ given by \eqref{eq:pihat(theta)}.
\item[(b)] The sequence $(\xi_{n},\zeta_{n})_{n\ge 0}$ is a Markov chain with transition operator $Q^{(\theta)}$. Its first component has initial state $i$ under $\Prob_{i}^{(\theta)}$ and is stationary under $\Prob^{(\theta)}:=\sum_{i}\pi_{i}(\theta)\Prob_{i}^{(\theta)}$.
\item[(c)] The sequence $(\xi_{n},A_{n},B_{n})_{n\ge 1}$ is stationary under $\Prob^{(\theta)}$.
\item[(d)] The sequence $(\xi_{n},S_{n})_{n\ge 0}$ is a \MRW\ with driving chain $(\xi_{n})_{n\ge 0}$ and $S_{0}=0$.
\item[(e)] The stationary drift of $(\xi_{n},S_{n})_{n\ge 0}$ under $\Prob^{(\theta)}$ equals
\end{itemize}
\begin{equation}\label{eq:stationary drift}
\Erw^{(\theta)}S_{1}\ =\ \frac{\rho'(\theta)}{\rho(\theta)}\ =\ \sum_{i,j\in\cS}\frac{u_{i}(\theta)v_{j}(\theta)}{\rho(\theta)}\,\Erw_{i}\Big[|A_{1}|^{\theta}\log|A_{1}|\1_{\{\xi_{1}=j\}}\Big]\ =\ u(\theta)^{\top}P'(\theta)v(\theta).
\end{equation}
\begin{itemize}\itemsep2pt
\item[] This drift is finite for all $\theta$ in the interior of $\bbD$, and negative at $\theta = 0$, i.e., under $\Prob^{(0)}=\Prob_{\pi}$. In fact $\Erw S_{1}=\Erw\log|A_{1}|=\rho'(0)$.
\item[(f)] If $\kappa > 0$ exists such that $\rho(\kappa) = 1$, then the stationary drift under $\Prob^{(\kappa)}$ is positive, but it can be infinite if $\kappa$ is a boundary point of $\bbD$. In the latter case, \eqref{eq:stationary drift} still holds with $\rho'(\kappa):=\lim_{\theta\uparrow\kappa}\rho'(\theta)$.
\end{itemize}
\end{Lemma}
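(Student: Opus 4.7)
The plan is to verify each of the six claims by direct computation using the definition \eqref{eq:def Ptheta_{i}} of $\Prob_{i}^{(\theta)}$, with the positive $\cF$-martingale property (established in the paragraph preceding the definition) ensuring that the family of finite-dimensional distributions is consistent and hence extends to a probability measure on $(\Omega,\cF_{\infty})$ by Kolmogorov's extension theorem.

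For (a), apply \eqref{eq:def Ptheta_{i}} with $n=1$ and $f=\mathbf{1}_{\{\xi_{1}=j\}}$ to obtain
$$
\Prob_{i}^{(\theta)}[\xi_{1}=j]\ =\ \frac{v_{j}(\theta)\sfp_{ij}(\theta)}{v_{i}(\theta)\rho(\theta)}\ =\ p_{ij}(\theta),
$$
matching \eqref{eq:Phat(theta)}. The Markov property of $(\xi_{n})_{n\ge 0}$ under $\Prob_{i}^{(\theta)}$ with transition matrix $P(\theta)$ then follows by a telescoping computation: using \eqref{eq: Ptheta vs Pi n} together with \eqref{eq:Phat^n(theta)}, one checks that for $n\ge 1$ and $j_{1},\ldots,j_{n}\in\cS$,
$$
\Prob_{i}^{(\theta)}[\xi_{1}=j_{1},\ldots,\xi_{n}=j_{n}]\ =\ p_{ij_{1}}(\theta)p_{j_{1}j_{2}}(\theta)\cdots p_{j_{n-1}j_{n}}(\theta).
$$
Irreducibility and uniqueness of $\pi(\theta)$ as stationary law are then inherited from \eqref{eq:p(i,j) positive} and \eqref{eq:pihat(theta)}, given the irreducibility of $\sfP(\theta)$ noted in Remark \ref{rem:Pkappa irreducible}.

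For (b), (c) and (d), I would observe that the Radon-Nikodym density of $\Prob_{i}^{(\theta)}$ with respect to $\Prob_{i}$ restricted to $\cF_{n}$ factors as a product of one-step densities $\e^{\theta\zeta_{k}}v_{\xi_{k}}(\theta)/(\rho(\theta)v_{\xi_{k-1}}(\theta))$, each depending only on $(\xi_{k-1},\xi_{k},\zeta_{k})$. This immediately yields the Markov kernel \eqref{eq:Qtheta g} for $(\xi_{n},\zeta_{n})_{n\ge 0}$ and, since the conditional law of $(A_{n},B_{n})$ given $(\xi_{n-1},\xi_{n})$ is unchanged (only the marginal law of the driving chain is tilted), the sequence $(\xi_{n},A_{n},B_{n})_{n\ge 1}$ remains homogeneously modulated by $(\xi_{n})_{n\ge 0}$; under $\Prob^{(\theta)}$ the driving chain starts in its stationary law $\pi(\theta)$, which gives (c), and (d) follows at once since $S_{n}=\sum_{k=1}^{n}\zeta_{k}$.

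For (e), combining the results above and \eqref{eq:derivative Cramer transform of P} gives
$$
\Erw^{(\theta)}[S_{1}]\ =\ \sum_{i,j\in\cS}\pi_{i}(\theta)\frac{v_{j}(\theta)}{v_{i}(\theta)\rho(\theta)}\sfp_{ij}'(\theta)\ =\ \frac{u(\theta)^{\top}\sfP'(\theta)v(\theta)}{\rho(\theta)},
$$
using $\pi_{i}(\theta)=u_{i}(\theta)v_{i}(\theta)$. Differentiating the eigenvalue equation $\sfP(\theta)v(\theta)=\rho(\theta)v(\theta)$ and multiplying on the left by $u(\theta)^{\top}$, the cross term $u(\theta)^{\top}\sfP(\theta)v'(\theta)=\rho(\theta)u(\theta)^{\top}v'(\theta)$ cancels the corresponding term on the right, leaving $u(\theta)^{\top}\sfP'(\theta)v(\theta)=\rho'(\theta)$ thanks to the normalization $u(\theta)^{\top}v(\theta)=1$. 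Hence $\Erw^{(\theta)}S_{1}=\rho'(\theta)/\rho(\theta)$. Smoothness of $\rho$ on the interior of $\bbD$ (Lemma \ref{lem:Ptheta vs Lambda}) gives finiteness there; the sign at $\theta=0$ follows from strict convexity of $\log\rho$ together with $\rho(0)=\rho(\kappa)=1$, so that $\rho'(0)<0<\rho'(\kappa)$, which also settles the first half of (f). For the case in (f) where $\kappa$ is a boundary point of $\bbD$, monotone convergence applied to $\sfp_{ij}'(\theta)\uparrow\Erw_{i}[|A_{1}|^{\kappa}\log|A_{1}|\mathbf{1}_{\{\xi_{1}=j\}}]$ as $\theta\uparrow\kappa$, combined with continuity of $u(\theta)$ and $v(\theta)$ on the closed interval, lets one pass to the limit in the eigenvalue identity to obtain \eqref{eq:stationary drift} with $\rho'(\kappa)$ interpreted as the left derivative; positivity of this limit is forced by convexity and $\rho(\kappa)=1>\rho(\theta)$ for $\theta\in(0,\kappa)$.

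The only delicate step is the boundary case in (f): if $\kappa$ lies on $\partial\bbD$, some of the moments entering \eqref{eq:stationary drift} may equal $+\infty$, so one must argue that the one-sided limits of $\sfp_{ij}'(\theta)$ exist in $[0,\infty]$ and that the resulting finite or infinite sum agrees with $\lim_{\theta\uparrow\kappa}\rho'(\theta)$; everything else reduces to routine manipulations with Perron-Frobenius eigenvectors and the definition \eqref{eq:def Ptheta_{i}}.
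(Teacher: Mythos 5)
Your overall strategy---computing finite-dimensional distributions from the definition \eqref{eq:def Ptheta_{i}} and exploiting the multiplicative factorization of the Radon--Nikodym density---is the right one and matches the spirit of the paper (which defers (a), (b), (d)--(f) to Lemmata 4.3--4.4 of \cite{AlsBroBur:23} and only proves (c) and the second equality in \eqref{eq:stationary drift} explicitly). The eigenvalue-differentiation argument for (e) is correct and standard.

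However, your justification of (c) contains a genuine error: the claim that ``the conditional law of $(A_{n},B_{n})$ given $(\xi_{n-1},\xi_{n})$ is unchanged (only the marginal law of the driving chain is tilted)'' is false. The one-step Radon--Nikodym factor $\e^{\theta\zeta_{k}}v_{\xi_{k}}(\theta)/(\rho(\theta)v_{\xi_{k-1}}(\theta))$ depends on $\zeta_{k}=\log|A_{k}|$, so the conditional law of $(A_{1},B_{1})$ given $(\xi_{0},\xi_{1})=(i,j)$ under $\Prob^{(\theta)}$ is the $|a|^{\theta}$-tilted kernel $\Phi_{ij}(\theta)^{-1}|a|^{\theta}F_{ij}(\diff a,\diff b)$, not $F_{ij}$. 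What \emph{is} unchanged, precisely because the tilting density is measurable with respect to $(\xi_{k},\zeta_{k})_{k}$, is the conditional law of $(A_{k},B_{k})_{k}$ given $(\xi_{k},\zeta_{k})_{k}$---this is exactly the kernel $\Gamma$ that the paper introduces in its proof of (c) and the observation on which the whole argument turns. Your conclusion that the triple remains homogeneously Markov-modulated and hence stationary under $\Prob^{(\theta)}$ is still correct, but it follows because the tilted kernel is \emph{homogeneous in $n$} and depends only on $(\xi_{n-1},\xi_{n})$, not because the kernel is unchanged. As stated, the justification would lead a reader astray; you should replace it with the invariance of the conditional law of $(A,B)$ given $(\xi,\zeta)$, as the paper does.

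A smaller point: in (e)--(f) you invoke ``strict convexity of $\log\rho$'' to conclude $\rho'(0)<0<\rho'(\kappa)$. The paper (Lemma \ref{lem:Ptheta vs Lambda}) only establishes convexity; strictness requires an additional nondegeneracy input such as \textsf{(B3)} (equivalently, that $\rho$ is not identically $1$ on $[0,\kappa]$). Worth flagging if you want the argument to be self-contained rather than inherited.
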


\begin{proof}
Except for part~(c), the proof of this lemma is omitted, as it follows with only minor modifications from the proofs of Lemmata 4.3 and 4.4 in \cite{AlsBroBur:23}, which treat the case $|\cS| = 2$.
To derive the second equality in \eqref{eq:stationary drift}, one simply uses the definition of $\Prob^{(\theta)}$, namely,
\begin{equation}\label{eq:def Ptheta}
\begin{split}
\Erw_\pi^{(\theta)} &f(\xi_{0},(\xi_{1},A_{1},B_{1}),\ldots,(\xi_{n},A_{n},B_{n}))\\
&= \sum_{i \in \cS} \frac{u_{i}(\theta)}{\rho(\theta)^n}\, \Erw_{i}\e^{\theta S_{n}} v_{\xi_{n}}(\theta) f(\xi_0, (\xi_{1}, A_{1}, B_{1}),\ldots,(\xi_{n}, A_{n}, B_{n})),
\end{split}
\end{equation}
for any bounded measurable function $f$. Combined with the definition of $\pi(\theta)$, this yields the expression for $\Erw^{(\theta)} S_{1}$.

\vspace{.2cm}
We now turn to part~(c). Define, for $n \in \N$, $i_0, \ldots, i_{n} \in \cS$, $x_{1}, \ldots, x_{n} \in \R$, and any bounded measurable function $f : \cS \times (\cS \times \R)^n \to \R$,
\begin{align*}
\Gamma f(i_0, (i_k, x_k)_{k=1,\ldots,n})
\,:=\,\Erw_{i_0}\big[f(\xi_0, (\xi_k, A_k, B_k)_{k=1,\ldots,n}) \,\big|\, (\xi_k, \zeta_k) = (i_k, x_k),\, k = 1, \ldots, n\big].
\end{align*}
Then, using the stationarity of $(\xi_{n}, \zeta_{n})_{n \ge 0}$ under $\Prob^{(\theta)}$ from part~(b), and recalling that $\pi_{i}(\theta) = u_{i}(\theta) v_{i}(\theta)$ for $i \in \cS$, we obtain:
\begin{align*}
\Erw_\pi^{(\theta)} &f(\xi_m, (\xi_{m+k}, A_{m+k}, B_{m+k})_{k=1,\ldots,n})\\
&= \sum_{i \in \cS} \frac{u_{i}(\theta)}{\rho(\theta)^{m+n}} \Erw_{i}\Big[\e^{\theta S_{m+n}} v_{\xi_{m+n}}(\theta)\,\Gamma f(\xi_m, (\xi_{m+k}, \zeta_{m+k})_{k=1,\ldots,n})\Big]\\
&= \Erw_\pi^{(\theta)} \Gamma f(\xi_m, (\xi_{m+k}, \zeta_{m+k})_{k=1,\ldots,n}) \\
&= \Erw_\pi^{(\theta)} \Gamma f(\xi_0, (\xi_k, \zeta_k)_{k=1,\ldots,n}) \\
&= \Erw_\pi^{(\theta)} f(\xi_0, (\xi_k, A_k, B_k)_{k=1,\ldots,n}),
\end{align*}
where the third equality follows by stationarity of $(\xi_{n}, \zeta_{n})_{n \ge 0}$. This shows that the law of $(\xi_{n}, A_{n}, B_{n})_{n \ge 1}$ under $\Prob^{(\theta)}$ is invariant under time shifts, i.e., the sequence is stationary.
\end{proof}

Choosing the dual filtration as $\dualcF_{0}:=\cF_{0}$,
\begin{align*}
\dualcF_{n}\,:=\,\sigma\big\{\dualxi_{0},(\dualxi_{k},\dualA_{k},\dualB_{k}):1\le k\le n\big\}\,=\,\sigma\big\{\xi_{0},(\xi_{-k},A_{-k+1}, B_{-k+1}):1\le k\le n\big\}
\end{align*}
for $n\ge 1$, and $\dualcF_{\infty}:=\sigma\{\dualcF_{n},n\ge 0\}$, we define the $\dProb_{i}^{(\theta)}$ on $(\Omega,\dualcF_{\infty})$ by
\begin{equation}\label{eq:def dual Ptheta_{i}}
\begin{split}
\dErw_{i}^{(\theta)}&f\big(\dualxi_{0},(\dualxi_{1},\dualA_{1},\dualB_{1}),\ldots,(\dualxi_{n},\dualA_{n},\dualB_{n})\big)\\
&:=\ \frac{\dErw_{i}e^{\theta\dualS_{n}}\dualv_{\dualxi_{n}}(\theta)f(\dualxi_{0},(\dualxi_{1},\dualA_{1},\dualB_{1}),\ldots,(\dualxi_{n},\dualA_{n},\dualB_{n}))}{\dualv_{i}(\theta)\rho(\theta)^{n}}\\
&=\ \frac{\Erw_{i}\e^{\theta S_{-n}}\dualv_{\xi_{-n}}(\theta)f(\xi_{0},(\xi_{-1},A_{0},B_{0}),\ldots,(\xi_{-n},A_{-n+1},B_{-n+1}))}{\dualv_{i}(\theta)\rho(\theta)^{n}}\\
&=\ \frac{v_{i}(\theta)}{\rho(\theta)^{n}}\,\Erw_{i}\Bigg[\frac{e^{\theta S_{-n}}f(\xi_{0},(\xi_{-1},A_{0},B_{0}),\ldots,(\xi_{-n},A_{-n+1},B_{-n+1}))}{v_{\xi_{-n}}(\theta)}\Bigg].
\end{split}
\end{equation}
for all $n\in\N$ and bounded $f$ as in \eqref{eq:def Ptheta_{i}}, and for $\theta\in\bbD$ and $i\in\cS$
(note that \eqref{eq:def vhat} has been utilized to get the last line). These probability measures form the exact dual analogs of the $\Prob_{i}^{(\theta)}$, and a dual analog of the previous lemma for the \MRW\ $(\dualxi_{n},\dualS_{n})_{n\ge 0}$ and the associated sequence $(\dualxi_{n},\dualzeta_{n})_{n\ge 0}$ under the $\dProb_{i}^{(\theta)}$ can now be stated in a straightforward manner. We refrain from doing so, but point out that the stationary law of $(\dualxi_{n})_{n\ge 0}$ under the $\dProb_{i}^{(\theta)}$ is also $\pi(\theta)$ and that (as a consequence) $(\dualxi_{n},\dualzeta_{n})_{n\ge 0}$ is a stationary Markov chain under $\dProb^{(\theta)}:=\sum_{i\in\cS}\pi_{i}(\theta)\dProb_{i}^{(\theta)}$, where
\begin{equation}\label{eq:def dual Ptheta}
\begin{split}
&\dErw^{(\theta)}f\big(\dualxi_{0},(\dualxi_{1},\dualA_{1},\dualB_{1}),\ldots,(\dualxi_{n},\dualA_{n},\dualB_{n})\big)\\
&=\ \dErw^{(\theta)}f\big(\xi_{0},(\xi_{-1},A_{0},B_{0}),\ldots,(\xi_{-n},A_{-n+1},B_{-n+1})\big)\\
&=\ \sum_{i\in\cS}\frac{\pi_{i}(\theta)}{\dualv_{i}(\theta)\rho(\theta)^{n}}\,\Erw_{i}e^{\theta S_{-n}}\dualv_{\xi_{-n}}(\theta)f(\xi_{0},(\xi_{-1},A_{0},B_{0}),\ldots,(\xi_{-n},A_{-n+1},B_{-n+1}))\\
&=\ \sum_{i\in\cS}\frac{v_{i}(\theta)\pi_{i}(\theta)}{\rho(\theta)^{n}}\,\Erw_{i}\Bigg[\frac{e^{\theta S_{-n}}f(\xi_{0},(\xi_{-1},A_{0},B_{0}),\ldots,(\xi_{-n},A_{-n+1},B_{-n+1}))}{v_{\xi_{-n}}(\theta)}\Bigg].
\end{split}
\end{equation}
for bounded functions $f$. Moreover, the sequence $(\dualxi_{n},\dualA_{n-1},\dualB_{n-1})_{n\ge 1}$ is stationary under $\dProb^{(\theta)}$ and $\dErw^{(\theta)}\dualS_{1}=\Erw^{(\theta)}S_{1}$ for any $\theta\in\bbD$.

\subsection{Lattice-type of $(\xi_{n},S_{n})_{n\ge 0}$ and its dual}\label{subsec:lattice-type} Let $\kappa$ be given by \textsf{(B1)}. By definition, the \MRW\ $(\xi_{n},S_{n})_{n\ge 0}$ is nonlattice under $\Prob^{(\kappa)}$ if
\begin{equation*}
\Prob^{(\kappa)}\big[\zeta_{1}-a_{\xi_{1}}+a_{\xi_{0}}\in\, d\Z\big]\,<\,1
\end{equation*} 
for any $d>0$ and any $\{a_{i}:i\in\cS\}\subset [0,d)$, see e.g.~\cite[p.\,106]{Alsmeyer:97}. As one can easily see, this is equivalent to \textsf{(B3)}, and by the same reasoning, \textsf{(B3)} ensures that the dual \MRW\ $(\dualxi_{n},\dualS_{n})_{n\ge 0}$ is nonlattice under $\dProb^{(\kappa)}$.\\

\section{Condition \textsf{(B1)}: Making life easier by geometric sampling}

Given assumptions \textsf{(B1--4)} of Theorem~\ref{thm:main_{1}}, we now show that in \textsf{(B1)} the irreducible transition matrix $P$ of the driving chain, as well as its Cram\'er transform $\sfP(\kappa)$, may be assumed to be \emph{positive} without loss of generality. That is, we may assume that
$$
p_{ij} > 0 \quad \text{and} \quad 
\Prob_{i}[A_{1} \ne 0 \mid \xi_{1} = j] = F_{ij}(\R\backslash\{0\}) > 0 
\quad \text{for all } i, j \in \cS.
$$
Since $\sfP(\theta)$ is positive if and only if its normalization $P(\theta)$ is positive, we focus on the latter, as its entries are transition probabilities and thus admit a probabilistic interpretation.

Furthermore, we show that if 
$$
\Prob[A_{1} > 0] \wedge \Prob[A_{1} < 0] > 0,
$$
then the transition probabilities
\begin{equation}\label{eq:def pplusminus}
p_{ij}^{-}(\kappa) := \Prob_{i}^{(\kappa)}[\xi_{1} = j, A_{1} < 0] 
\quad \text{and} \quad 
p_{ij}^{+}(\kappa) := \Prob_{i}^{(\kappa)}[\xi_{1} = j, A_{1} > 0]
\end{equation}
can also be assumed to be positive. All these conclusions extend to the dual quantities $\dualP$, $\dualP(\kappa)$, and $\dualp_{ij}^{\,\pm}(\kappa)$ as well. The key tool is \emph{geometric sampling}.

\vspace{.1cm}
Let $(\vth_{n})_{n \in \Z}$ be a doubly infinite sequence of random times, independent of $(\xi_{n}, A_{n}, B_{n})_{n \in \Z}$, with $\vth_0 = 0$ and i.i.d.~increments $\vth_{n} - \vth_{n-1}$ having a geometric distribution:
$$
\Prob[\vth_{n} - \vth_{n-1} = k] = \frac{1}{2^k} 
\quad \text{for all } k \in \N,\; n \in \Z.
$$
Given the stationary \MMLIFS\ $(\xi_{n}, R_{n})_{n \ge 0}$, we define the geometrically sampled process 
$$ (\xi_{n}^{*}, R_{n}^{*}) := (\xi_{\vth_{n}}, R_{\vth_{n}})\quad n\ge 0, $$
and note, without proof, that this sequence is again a stationary \MMLIFS\ with the same marginal law and associated sequence $(A_{n}^{*}, B_{n}^{*})_{n \in \Z}$ defined by
$$
(A_{n}^{*}, B_{n}^{*}) := 
\left( A_{\vth_{n-1}+1} \cdots A_{\vth_{n}},\; 
\Psi_{\vth_{n}} \circ \cdots \circ \Psi_{\vth_{n-1}+1}(0) \right),
$$
modulated by $(\xi_{n}^{*})_{n \in \Z}$.

\begin{Lemma}\label{lem_geosampling}
Let $P^{*} = (p_{ij}^{*})_{i,j \in \cS}$ be the transition matrix of $(\xi_{n}^{*})_{n \ge 0}$ under $\Prob = \Prob^{(0)}$, and let $P^{*}(\theta) = (p_{ij}^{*}(\theta))_{i,j \in \cS}$ denote its normalized Cram\'er transform. Define also
$$
p_{ij}^{*\pm}(\theta) := \Prob_{i}^{(\theta)}[\xi_{1}^{*} = j,\; A_{1}^{*} \gtrless 0]
\quad \text{for } \theta \in [0, \kappa],
$$
and let $\dualP^{*}$, $\dualP^{*}(\theta)$, and $\dualp_{ij}^{*\pm}(\theta)$ denote the corresponding quantities for the geometrically sampled dual chain $\dualxi_{n}^{*} := \xi_{-\vth_{-n}}$, $n \ge 0$. Then the following holds:
\begin{itemize}[leftmargin=1cm]\itemsep3pt
\item[(a)] The matrix
$$ P^{*}\ =\ \sum_{n \ge 1} \frac{1}{2^n} P^n $$
is positive, i.e., $p_{ij}^{*} > 0$ for all $i, j \in \cS$, and the same holds for the matrix
$$ P^{*}(\theta)\ =\ \sum_{n \ge 1} \frac{1}{2^n} P(\theta)^n\quad\text{for each }\theta\in [0, \kappa]. $$
\item[(b)] If $\Prob[A_{1}>0\hspace{.8pt}]\wedge\Prob[A_{1}<0\hspace{.8pt}]>0$, then $p_{ij}^{*\pm}(\theta)>0$ for all $i,j\in\cS$ and $\theta\in (0,\kappa]$.
\item[(c)] The same conclusions hold for the dual quantities $\dualP^{*}$, $\dualP^{*}(\theta)$, and $\dualp_{ij}^{*\pm}(\theta)$ for each $\theta\in (0,\kappa]$.
\end{itemize}
\end{Lemma}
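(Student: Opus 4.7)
My plan is to compute the geometrically sampled transition probabilities explicitly by conditioning on $\vth_{1}$ and then combine irreducibility of the underlying matrices with the sign hypothesis. A key preliminary observation is that $(\vth_{n})_{n\in\Z}$ is independent of $(\xi_{n},A_{n},B_{n})_{n\in\Z}$, and this independence is preserved under the measure change $\Prob\to\Prob^{(\theta)}$ because the Radon-Nikodym derivative in \eqref{eq:def Ptheta_{i}} depends only on the $(\xi,A,B)$-components. In particular, $\vth_{1}$ retains its geometric law $2^{-n}$ under $\Prob^{(\theta)}$.

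For part (a) I would then condition on $\vth_{1}=n$ to obtain the displayed identities $P^{*}=\sum_{n\ge 1}2^{-n}P^{n}$ and $P^{*}(\theta)=\sum_{n\ge 1}2^{-n}P(\theta)^{n}$. Since $\cS$ is finite and both $P$ and $P(\theta)$ are irreducible -- the latter by Remark~\ref{rem:Pkappa irreducible} together with \eqref{eq:p(i,j) positive} -- for each pair $(i,j)$ some power has a strictly positive entry, and therefore $p_{ij}^{*}$ and $p_{ij}^{*}(\theta)$ are positive. For part (b) I would split the Cram\'er transform as $\sfP(\theta)=\sfP^{+}(\theta)+\sfP^{-}(\theta)$ via $(\sfP^{\pm}(\theta))_{ij}:=\Erw_{i}\big[|A_{1}|^{\theta}\1_{\{\xi_{1}=j,\,\pm A_{1}>0\}}\big]$ and correspondingly set $P^{\pm}(\theta):=\rho(\theta)^{-1}D(\theta)^{-1}\sfP^{\pm}(\theta)D(\theta)$, so that $P(\theta)=P^{+}(\theta)+P^{-}(\theta)$. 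For $\theta>0$, $|A_{1}|^{\theta}>0$ on $\{A_{1}\ne 0\}$, so the hypothesis $\Prob[A_{1}>0]\wedge\Prob[A_{1}<0]>0$ guarantees that both $P^{+}(\theta)$ and $P^{-}(\theta)$ carry at least one strictly positive entry. Conditioning on $\vth_{1}$ as before yields
$$
p_{ij}^{*\sigma}(\theta) \,=\, \sum_{n\ge 1}2^{-n}\sum_{\substack{(\epsilon_{1},\ldots,\epsilon_{n})\in\{+,-\}^{n}\\ \epsilon_{1}\cdots\epsilon_{n}=\sigma}}\bigl(P^{\epsilon_{1}}(\theta)\cdots P^{\epsilon_{n}}(\theta)\bigr)_{ij},
$$
so it will suffice to exhibit, for each $(i,j)$ and each target parity $\sigma\in\{+,-\}$, one length $n$ and one sign word with product $\sigma$ whose associated $(i,j)$-entry is positive. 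Starting from any positive-weight path $i\to j$ (available via irreducibility of $P(\theta)$) of cumulative sign $\sigma_{0}$, my plan is to flip parity when $\sigma_{0}\ne\sigma$ by inserting at a suitable intermediate vertex a short detour that uses one of the $-$-transitions of $P^{-}(\theta)$ guaranteed by the hypothesis, with connecting legs drawn from $P^{+}(\theta)$.

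Part (c) will follow verbatim by applying the arguments of (a) and (b) to the time-reversed trivariate sequence $(\dualxi_{n},\dualA_{n},\dualB_{n})_{n\ge 0}$: $\dualP$ is irreducible because $\dualp_{ij}=\pi_{j}p_{ji}/\pi_{i}>0$ iff $p_{ji}>0$; $\dualP(\theta)$ inherits irreducibility through the similarity $\dualsfP(\theta)=\itPi(\theta)^{-1}\sfP(\theta)^{\top}\itPi(\theta)$ recorded in Subsection~\ref{subsec:Cramer transform}; and since $\dualA_{n}=A_{-n+1}$, the sign hypothesis transfers intact. The hard part will be the parity-flipping construction in (b): verifying that the inserted detour can indeed be chosen with positive weight for every pair $(i,j)$ is a nontrivial combinatorial point, since it demands coordinating the sign-changing and sign-preserving transitions guaranteed by the hypothesis with the connecting paths produced by irreducibility of $P(\theta)$. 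This is where I expect the bulk of the bookkeeping to be concentrated.
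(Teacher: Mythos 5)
Your treatment of part (a) matches the paper's proof: both derive $P^{*}=\sum_{n\ge1}2^{-n}P^{n}$ (and its $\theta$-tilted analogue) and read off strict positivity from irreducibility of the finite matrices, with (c) then handled by the same duality observations. For part (b), however, your route differs. The paper introduces the sign-flip times $\tau_{n}$ and then disposes of (b) by the one-step bound
\begin{equation*}
p_{ij}^{*\pm}(\theta)\ \ge\ \Prob_{i}^{(\theta)}\big[\xi_{1}=j,\,A_{1}\gtrless0,\,\vth_{1}=1\big]\ =\ \tfrac{1}{2}\,p_{ij}^{\pm}(\theta),
\end{equation*}
asserting the right-hand side to be positive for every $i,j$; you instead split $P(\theta)=P^{+}(\theta)+P^{-}(\theta)$ and attempt to build, for every pair $(i,j)$ and each parity, a positive-weight sign word by inserting a sign-changing detour into an irreducibility path.

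You correctly flag the parity-flipping detour as ``the hard part,'' but the obstacle is not combinatorial bookkeeping -- it is that the stated hypothesis $\Prob[A_{1}>0]\wedge\Prob[A_{1}<0]>0$ only guarantees $p_{ij}^{+}(\theta)>0$ and $p_{kl}^{-}(\theta)>0$ for \emph{some} pairs $(i,j),(k,l)$, not for all, and that is exactly what both your detour construction and the paper's displayed one-step bound need. Consider $\cS=\{1,2\}$ with $P$ positive, $A_{1}>0$ a.s.\ on the transitions $1\to1$ and $2\to2$, and $A_{1}<0$ a.s.\ on $1\to2$ and $2\to1$, with $\log|A_{1}|$ diffuse so that \textsf{(B1)--(B4)} can all be arranged. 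Then $\sign(\itPi_{n})=+1$ exactly when $\xi_{n}=\xi_{0}$, so every loop at state $1$ has positive slope and $p_{11}^{*-}(\theta)=\Prob_{1}^{(\theta)}[\xi_{\vth_{1}}=1,\;\itPi_{\vth_{1}}<0]=0$ for every $\theta$; no inserted detour of the kind you describe can remedy this. What the subsequent arguments (in particular condition \textsf{(B+)} and the sign-chain analysis in Case~2) actually require is irreducibility of the sign chain on $\cS\times\{-1,+1\}$ under $\Prob^{(\theta)}$, which is strictly stronger than the stated sign hypothesis. Your proposal, as it stands, cannot close (b) without that stronger input -- and you should be aware that the paper's own one-line bound leaves the same step unaddressed, so this is a point to flag rather than to try to engineer around by path-construction alone.
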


Note that geometric sampling of the dual chain $(\xi_{n}^{*})_{n \ge 0}$ yields the same result as taking the dual of the geometrically sampled chain $(\xi_{n})_{n \ge 0}$. In other words, duality and geometric sampling commute. As a consequence, we have
$$
\dualP^{*}\ =\ \itPi^{-1} P^{*} \itPi\ =\ \left( \frac{\pi_j p_{ji}^{*}}{\pi_{i}} \right)_{i,j \in \cS},
$$
and an analogous relation holds between $\dualP^{*}(\theta)$ and $P^{*}(\theta)$ for every $\theta \in [0, \kappa]$.

\begin{proof}[Proof of Lemma~\ref{lem_geosampling}]
(a) Since $P$ is irreducible, $P^m$ is a positive matrix for some $m \in \N$ (recall that $\cS$ is finite). Thus, $P^{*}=\sum_{n\ge 1}2^{-n}P^{n}>0$, as claimed. As explained at the beginning of Subsection \ref{subsec:measure change}, the matrix $\sfP(\theta)$ is irreducible for all $\theta \in [0, \kappa]$, including in particular $\theta = \kappa$. This implies that for each such $\theta$,
$$ \sfP^{*}(\theta) := \left( \Erw_{i}|A_{1}^{*}|^\theta \,\1_{\{\xi_{1}^{*} = j\}}\right)_{i,j \in \cS}
\ =\ \sum_{n \ge 1}\frac{1}{2^n}\sfP(\theta)^n $$
and its normalization $P^{*}(\theta)$ $P^{*}(\theta)$ are positive matrices as well. Moreover, the matrix $\sfP^{*}(\theta)$ has dominant eigenvalue
$$ \rho^{*}(\theta)\ =\ \frac{\rho(\theta)}{2 - \rho(\theta)}, $$
with the same left and right eigenvectors as $\sfP(\theta)$. In particular, $\rho^{*}(\kappa) = \rho(\kappa) = 1$.

\vspace{.2cm}
(b) Let $\theta \in (0, \kappa]$. Since $\theta > 0$, we have
$$ \Prob^{(\theta)}\left[ \itPi_{n} \ne 0 \text{ for all } n \in \N \right]\,=\,1. $$
because the transformed measure $\Prob^{(\theta)}$ gives zero weight to paths where $|A_{n}| = 0$ for some $n$.
Define $\tau_0 := 0$ and let $\tau_{1} < \tau_2 < \cdots$ denote the successive times at which the product $\itPi_{n}$ flips sign, i.e.,
$$ \tau_{n}\ =\ \inf \left\{ k > \tau_{n-1} : \frac{\itPi_k}{\itPi_{\tau_{n-1}}} < 0 \right\}\ =\ \inf \left\{ k > \tau_{n-1} : A_k < 0 \right\}. $$
Under the assumption
$$ \Prob[A_{1} > 0] \wedge \Prob[A_{1} < 0] > 0, $$
each $\tau_{n}$ is almost surely finite under $\Prob^{(\theta)}$. Moreover, $\itPi_k < 0$ holds whenever $k \in [\tau_{2n-1}, \tau_{2n})$ for some $n \in \N$, hence:
$$ p_{ij}^{*+}(\theta)\ =\ \Prob_{i}^{(\theta)} \left[ \xi_{\vth_{1}} = j,\; \vth_{1} \in [\tau_{2n}, \tau_{2n+1}) \text{ for some } n \in \N_0 \right], $$
and similarly,
$$ p_{ij}^{*-}(\theta)\ =\ \Prob_{i}^{(\theta)}\left[ \xi_{\vth_{1}} = j,\; \vth_{1} \in [\tau_{2n-1}, \tau_{2n}) \text{ for some } n \in \N \right]. $$
To show that these probabilities are strictly positive, observe that
\begin{align*}
p_{ij}^{*+}(\theta)\ &\ge\ \Prob_{i}^{(\theta)}\big[\xi_{1} = j,\,A_{1}\ge 0,\,\vth_{1}=1\big]\ =\ \frac{1}{2}p_{ij}^{+}(\theta)\ >\ 0
\shortintertext{and} 
p_{ij}^{*-}(\theta)\ &\ge\ \Prob_{i}^{(\theta)}\big[\xi_{1} = j,\,A_{1}<0,\,\vth_{1}=1\big]\ =\ \frac{1}{2}p_{ij}^{-}(\theta)\ >\ 0
\end{align*}
for all $i,j\in\cS$.

\vspace{.2cm}
(c) Finally, the same conclusions hold for the dual quantities. Indeed, since duality and geometric sampling commute (as noted at the start), and since all $\pi_{i}$ and $\pi_{i}(\theta)$ are positive, the same positivity results carry over to $\dualP^{*}$, $\dualP^{*}(\theta)$, and $\dualp_{ij}^{*\pm}(\theta)$.
\end{proof}

In order to show that geometric sampling is indeed compliant with our goal of proving Theorem~\ref{thm:main_{1}}, we must still verify that its assumptions \textsf{(B2--4)} remain valid for the sequence $(A_{n}^{*},B_{n}^{*})_{n\in\Z}$: 

\vspace{.1cm}
As noted above, the matrix $\sfP^{*}(\theta)$ has dominant eigenvalue
$$
\rho^{*}(\theta)\ =\ \frac{\rho(\theta)}{2 - \rho(\theta)} \quad \text{for all } \theta \in [0, \kappa],
$$
with derivative
$$
{\rho^{*}}'(\theta)\ =\ \frac{2 \rho'(\theta)}{(2 - \rho(\theta))^{2}}.
$$
Hence, by \eqref{eq:stationary drift}, the stationary drift of $(\xi_{n}^{*},S_{n}^{*})_{n\ge 0}$ under $\Prob^{(\kappa)}$ equals $2\rho'(\kappa)$, where $S_{n}^{*} := \log |\mathit{\Pi}_{n}^{*}|$. By assumption \textsf{(B2)}, we obtain
\begin{align*}
&\sum_{i,j\in\cS}u_{i}(\kappa)v_{j}(\kappa)\,\Erw_{i}\Big[|A_{1}^{*}|^{\kappa}\log |A_{1}^{*}|\1_{\{\xi_{1}^{*}=j\}}\Big]\\
&\hspace{2cm}=\ 2\rho'(\kappa)\ =\ 2\sum_{i,j\in\cS}u_{i}(\kappa)v_{j}(\kappa)\,\Erw_{i}\Big[|A_{1}|^{\kappa}\log |A_{1}|\1_{\{\xi_{1}=j\}}\Big]\ <\ \infty,
\end{align*}
and so $\Erw|A_{1}^{*}|^{\kappa} \log |A_{1}^{*}| < \infty$ follows from the positivity of all components of $u(\kappa)$ and $v(\kappa)$.

\smallskip
Turning to the finiteness of $\Erw|B_{1}^{*}|^{\kappa}$, we note:
\begin{align*}
\Erw|B_{1}^{*}|^{\kappa}\ &=\ \Erw\bigg|\sum_{k=1}^{\vth_{1}}B_{k}A_{k+1}\cdots A_{\vth_{1}}\bigg|^{\kappa}\ \le\ \sum_{n\ge 1}\frac{1}{2^{n}}\sum_{k=1}^{n}\Erw|B_{k}A_{k+1}\cdots A_{n}|^{\kappa}\\
&=\ \sum_{n\ge 1}\frac{1}{2^{n}}\sum_{k=1}^{n}\Erw|B_{0}A_{1}\cdots A_{n-k}|^{\kappa}\ =\ \sum_{n\ge 1}\frac{1}{2^{n}}\sum_{k=0}^{n-1}\Erw\Big[|B_{0}|^{\kappa}e^{\kappa S_{k}}\Big]\\
&=\ \sum_{n\ge 1}\frac{1}{2^{n}}\sum_{k=0}^{n-1}\sum_{i\in\cS}\pi_{i}\,\Erw_{i}|B_{0}|^{\kappa}\,\Erw_{i}e^{\kappa S_{k}}\\
&\le\ \Erw|B_{0}|^{\kappa}\Bigg(1+\sum_{n\ge 1}\frac{1}{2^{n}\|v(\kappa)\|_{0}}\sum_{k=1}^{n-1}\sum_{i\in\cS}\,\Erw_{i}\big[e^{\kappa S_{k}}v_{\xi_{k}}(\kappa)\big]\Bigg),\quad\|x\|_{0}\,:=\,\min_{i\in\cS}|x_{i}|\\
&=\ \Erw|B_{0}|^{\kappa}\Bigg(1+\sum_{n\ge 1}\frac{n-1}{2^{n}\|v(\kappa)\|_{0}}\sum_{i\in\cS}v_{i}(\kappa)\Bigg)\ <\ \infty,
\end{align*}
In the fourth equality, we used the conditional independence of $B_{0}$ and $S_{k}$ given $\xi_{0}$, which follows from the Markov-modulated structure of the sequence $(\xi_{k}, A_{k}, B_{k})_{k\ge 0}$.
The last equality relies on the martingale property of the process $(e^{\kappa S_{n}} v_{\xi_{n}}(\kappa))_{n\ge 0}$ under $\Prob$, as discussed earlier.

\vspace{.1cm}
As explained in Subsection~\ref{subsec:lattice-type}, condition \textsf{(B3)} for $\zeta_{1}^{*} = \log |A_{1}^{*}|$ means that the associated \MRW\ $(\xi_{n}^{*}, S_{n}^{*})_{n \ge 0}$ is nonlattice under $\Prob^{(\kappa)}$. This follows from Lemma~A.6 in \cite{Alsmeyer:97} on the persistence of lattice type, because $(\xi_{n}, S_{n})_{n \ge 0}$ is itself nonlattice.

\vspace{.1cm}
Finally, condition \textsf{(B4)} for $B_{1}^{*}$ follows directly from
$$
\Prob[B_{1}^{*} = 0] \ \le\ \Prob[\vth_{1} = 1,\, B_{1} = 0] + \Prob[\vth_{1} > 1] \ =\ \frac{\Prob[B_{1} = 0] + 1}{2} \ <\ 1.
$$

\vspace{.2cm}
After these considerations, it suffices to prove Theorem~\ref{thm:main_{1}} under the \textbf{additional assumption}
\begin{itemize}[leftmargin=1.4cm]
\item[\textsf{(B+)}] For any $\theta \in \bbD$, the entries of $(p_{ij}^{-}(\theta))_{i,j\in\cS}$, $(p_{ij}^{+}(\theta))_{i,j\in\cS}$, and $\sfP(\theta) = (\sfp_{ij}(\theta))_{i,j\in\cS}$ are strictly positive. The same holds for their dual counterparts.
\end{itemize}

\section{Subsampling at hitting times}

Keeping the starred notation from the previous section, suppose now that $(\vth_{n})_{n\in\Z}$, again satisfying
$$
\ldots < \vth_{-2} < \vth_{-1} < \vth_{0} = 0 < \vth_{1} < \vth_{2} < \ldots,
$$
denotes a sequence of hitting times associated with $(\xi_{n}, A_{n}, B_{n})_{n\in\Z}$. More precisely, for a given measurable set $E \subset \cS \times \R^2$, define the hitting times forward and backward in time by
$$
\vth_{n} := \inf\big\{k > \vth_{n-1} : (\xi_{\ell}, A_{\ell}, B_{\ell})_{\vth_{n-1} < \ell \le k} \in E\big\},
$$
and
$$
\vth_{-n} := \inf\big\{k < \vth_{-n+1} : (\xi_{\ell}, A_{\ell}, B_{\ell})_{k < \ell \le \vth_{-n+1}} \in E\big\},
$$
for all $n \in \N$. Assume that these hitting times are $\Prob$-almost surely finite. Since $\Prob$ dominates both $\Prob^{(\theta)}$ on $\cF_{\infty}$ and $\dProb^{(\theta)}$ on $\dualcF_{\infty}$ for every $\theta \in [0,\kappa]$, we then have
$$
\Prob^{(\theta)}[\vth_{n} < \infty] = 1 \quad\text{and}\quad \dProb^{(\theta)}[\vth_{-n} < \infty] = 1 \quad\text{for all } n \in \N.
$$

It follows that the subsampled sequences
$$
(\xi_{n}^{*})_{n \ge 0} := (\xi_{\vth_{n}})_{n \ge 0}
\quad \text{and} \quad
(\dualxi_{n}^{*})_{n \ge 0} := (\xi_{\vth_{-n}})_{n \ge 0}
$$
are both Markov chains on $\cS$. Since $\cS$ is finite, these chains are irreducible and hence positive recurrent on some subset $\cS^{*} \subset \cS$.

\section{Exponential smoothing}

Given a measurable function $f:\cS\times\R\to\R$ such that $f(i,\cdot)$ is Lebesgue integrable for each $i\in\cS$, we define $\ovl{f}:\cS\times\R\to\R$, called \emph{exponential smoothing of $f$}, as in \cite{Goldie:91} (when fixing $i$) by
$$
\ovl{f}(i,t)\ :=\ \int_{(-\infty,t]} \e^{-(t-x)} f(i,x)\,\diff x\ =\ \Erw \big[f(i,t-Z)\big],\quad i\in\cS,
$$
where $Z$ denotes a standard exponential random variable. The main properties of exponential smoothing are summarized in the following lemma.

\begin{Lemma}\label{lem:properties smoothing}
Given $f,\ovl{f}$ as above, the following assertions hold:
\begin{itemize}[leftmargin=1.5cm, itemsep=4pt]
\item[\textsf{(ES-1)}] For each $i\in\cS$, the function $\ovl{f}(i,\cdot)$ is directly Riemann integrable, and
$$
\int_{\R} f(i,x)\,\diff x\ =\ \int_{\R} \ovl{f}(i,x)\,\diff x.
$$

\item[\textsf{(ES-2)}] If $f$ satisfies the Markov renewal equation $f = g + f*Q$ for a bounded function $g$ and a finite kernel $Q$ from $\cS$ to $\cS\times\R$, that is,
$$
f(i,t)\ =\ g(i,t)\ +\ \int_{\cS\times\R} f(s,t-x)\, Q(i,\diff s\times\diff x)\quad\text{for all } (i,t)\in\cS\times\R,
$$
then so does $\ovl{f}$, namely
$$
\ovl{f}\,=\,\ovl{g} + \ovl{f*Q}\,=\,\ovl{g} + \ovl{f} * Q.
$$
\end{itemize}
\end{Lemma}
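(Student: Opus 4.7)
\textbf{Proof plan for Lemma~\ref{lem:properties smoothing}.} The plan is to treat the two assertions separately, starting with the easier one.

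For \textsf{(ES-2)}, I would proceed by a Fubini argument. Using the representation $\ovl{f}(i,t)=\int_{0}^{\infty}e^{-z}f(i,t-z)\,\diff z$ and the linearity of the smoothing operator, it suffices to check that smoothing commutes with the action of the kernel $Q$, i.e.\ $\ovl{f*Q}=\ovl{f}*Q$. Writing out
$$
\ovl{f*Q}(i,t)\ =\ \int_{0}^{\infty}e^{-z}\int_{\cS\times\R}f(s,t-z-x)\,Q(i,\diff s\times\diff x)\,\diff z
$$
and swapping the two integrals -- justified by repeating the computation for $|f|$ and invoking the finiteness of $Q$ together with the boundedness of $g$ in the renewal equation -- yields the claim. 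Applying $\,\ovl{\phantom{f}}\,$ to both sides of $f=g+f*Q$ and using linearity then gives $\ovl{f}=\ovl{g}+\ovl{f}*Q$.

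For \textsf{(ES-1)}, my starting point is the identity $\ovl{f}(i,t)=e^{-t}H(i,t)$ with $H(i,t):=\int_{-\infty}^{t}e^{u}f(i,u)\,\diff u$. Since $\int_{-\infty}^{t}e^{u}|f(i,u)|\,\diff u\le e^{t}\|f(i,\cdot)\|_{L^{1}}<\infty$, the function $H(i,\cdot)$ is absolutely continuous, and hence so is $\ovl{f}(i,\cdot)$; in particular it is continuous on all of $\R$. The identity of integrals $\int_{\R}\ovl{f}(i,t)\,\diff t=\int_{\R}f(i,x)\,\diff x$ is then an immediate application of Fubini--Tonelli to the kernel $e^{-(t-x)}\1_{\{x\le t\}}$ (first for $|f|$ to justify the swap, then for $f$).

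The main obstacle is the direct Riemann integrability of $\ovl{f}(i,\cdot)$. Here I would reduce matters to bounding $\sum_{n\in\Z}M_{n}^{(i)}$ with $M_{n}^{(i)}:=\sup_{t\in[n,n+1]}|\ovl{f}(i,t)|$. Setting $a_{k}^{(i)}:=\int_{k}^{k+1}|f(i,u)|\,\diff u$, one has $\sum_{k\in\Z}a_{k}^{(i)}=\|f(i,\cdot)\|_{L^{1}}<\infty$, and by monotonicity of $t\mapsto\int_{-\infty}^{t}e^{u}|f(i,u)|\,\diff u$ together with the estimate $e^{-t}\le e^{-n}$ for $t\in[n,n+1]$, one obtains
$$
M_{n}^{(i)}\ \le\ e^{-n}\int_{-\infty}^{n+1}e^{u}|f(i,u)|\,\diff u\ \le\ e\sum_{k\le n}e^{k-n}a_{k}^{(i)}.
$$
Interchanging the order of summation gives $\sum_{n}M_{n}^{(i)}\le\frac{e}{1-e^{-1}}\|f(i,\cdot)\|_{L^{1}}<\infty$. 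Combined with the continuity of $\ovl{f}(i,\cdot)$ established above, this yields direct Riemann integrability and completes the proof.
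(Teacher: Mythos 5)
Your proof is correct. For \textsf{(ES-2)} you use essentially the same Fubini interchange as the paper, which phrases the identical computation using $\Erw[\cdot]$ for the exponential variable $Z$. One small correction to your justification of the interchange $\ovl{f*Q}=\ovl{f}*Q$: what makes it legitimate is the finiteness of $Q(i,\cdot)$ together with the standing assumption that each $f(s,\cdot)$ is Lebesgue integrable, which gives the uniform bound
$$\int_0^\infty e^{-z}|f(s,t-x-z)|\,\diff z\ \le\ \|f(s,\cdot)\|_{L^1}\quad\text{for all }t,x\in\R,$$
so Tonelli applies; the boundedness of $g$ ensures $\ovl{g}$ is well-defined but plays no role in this step. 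For \textsf{(ES-1)} the paper does no work and simply cites Goldie \cite[Lemma~9.2]{Goldie:91}, whereas you supply the standard self-contained argument: continuity of $\ovl{f}(i,\cdot)$ via absolute continuity of $t\mapsto\int_{-\infty}^t e^u f(i,u)\,\diff u$, the equality of integrals by Fubini--Tonelli, and the geometric bound $M_n^{(i)}\le e\sum_{k\le n}e^{k-n}a_k^{(i)}$ followed by an interchange of sums, giving $\sum_n M_n^{(i)}\le\tfrac{e}{1-e^{-1}}\|f(i,\cdot)\|_{L^1}<\infty$, which together with continuity yields direct Riemann integrability. Both routes establish the same result; yours is more explicit and verifiable in place, at the cost of a longer write-up.
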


\begin{proof}
\textsf{(ES-1)} is proved in \cite[Lemma 9.2]{Goldie:91}. To show \textsf{(ES-2)}, observe that
\begin{align*}
\ovl{f}(i,t)
&=\ \Erw f(i,t-Z)\\
&=\ \Erw g(i,t-Z)\ +\ \Erw\int_{\cS\times\R} f(s,t-x-Z)\ Q(i,\diff s\times\diff x)\\
&=\ \ovl{g}(i,t)\ +\ \int_{\cS\times\R} \Erw f(s,t-x-Z)\ Q(i,\diff s\times\diff x)\\
&=\ \ovl{g}(i,t)\ +\ \int_{\cS\times\R} \ovl{f}(s,t-x)\ Q(i,\diff s\times\diff x).
\end{align*}
for all $(i,t)\in\cS\times\R$.
\end{proof}

It has been shown in \cite[Lemma 9.3]{Goldie:91} that, for any random variable $X$ and $\kappa > 0$, the implication
$$ \lim_{s\to\infty}\frac{1}{s}\int_{0}^{s} u^{\kappa}\,\Prob[X > u]\,\diff u\ =\ C
\quad\Longrightarrow\quad
\lim_{s\to\infty} s^{\kappa} \,\Prob[X > s]\ =\ C $$
holds true. The connection with exponential smoothing becomes apparent by observing that this implication is equivalent to
\begin{equation}\label{eq:convergence and smoothing}
\lim_{t\to\infty}\ovl{f}(t)\ =\ C\quad\Longrightarrow\quad\lim_{t\to\infty}f(t)\ =\ C
\end{equation}
where $f(t) := \e^{\kappa t} \,\Prob[X > \e^{t}]$. In the present context, we apply this equivalence to the family of functions $f(i,t) := \e^{\kappa t}\, \Prob_{i}[X > \e^{t}]$, $i \in \cS$, with $C_{i}$ in place of $C$.

\section{Proof of Theorem \ref{thm:main_{1}}}\label{sec:proof thm main_{1}}

We distinguish the two cases
$$ \Prob[A_{1}\ge 0\hspace{.8pt}]=1\quad\text{and}\quad\Prob[A_{1}< 0\hspace{.8pt}]>0 $$
and will prove the second case by reduction to the first one. We also note that some technical lemmata regarding integrability and lattice-type are collected at the end of this section under auxiliaries. Recall that w.l.o.g. \textsf{(B+)} is assumed besides \textsf{(B1--4)}.

\vspace{.2cm}
\cblue\textsc{Case 1}. \cblack $\Prob[A_{1} \ge 0\,] = 1$. Then $S_{n} = \log \itPi_{n}$ and $\dualS_{n} = \log \itPi_{-n}$ for all $n \in \Z$. Under the assumptions of the theorem, we have $S_{n} \to -\infty$ a.s., and thus $\itPi_{n} \to 0$ a.s. by the strong law of large numbers (since $\Erw \log|A_{1}| < 0$). Consequently, $\itPi_{n} R_{n} \to 0$ in probability, by the stationarity of $(R_{n})_{n \ge 0}$ under $\Prob$. As already noted at the beginning of Subsection~\ref{subsec:lattice-type}, the \MRW\ $(\xi_{n}, S_{n})_{n \ge 0}$ is nonlattice under $\Prob^{(\kappa)}$ (by \textsf{(B3)}), and it has finite and positive stationary drift
$$
\rho'(\kappa) = \sum_{i} u_{i}(\kappa)\, \Erw_{i}[\e^{\kappa S_{1}} S_{1}]
$$
(by \textsf{(B2)}, \eqref{eq:stationary drift} and Lemma \ref{lem:Pdelta^kappa case 1}(f)). The same holds true for the dual process $(\dualxi_{n}, \dualS_{n})_{n \ge 0}$ under $\dProb^{(\kappa)}$.
Define
\begin{gather}
G(i, t)\ :=\ \Prob_{i}[R_{0} > \e^{t}], \nonumber \\
\Delta(i, t)\ :=\ \Prob_{i}[R_{0} > \e^{t}] - \Prob_{i}[A_{0} R_{-1} > \e^{t}]
\ =\ \Prob_{i}[R_{0} > \e^{t}] - \Prob_{i}[R_{-1} > \e^{t - \dualS_{1}}], \nonumber \\
I_{i}^{+} := \int_{0}^{\infty} \big(\Prob_{i}[R_{0} > t] - \Prob_{i}[A_{0} R_{-1} > t]\big) t^{\kappa - 1} \, \diff t\ = \int_{\R} \e^{\kappa t} \Delta(i, t) \, \diff t,\nonumber
\shortintertext{and}
\|I_{i}^{+}\|_{1} := \int_{0}^{\infty} \big| \Prob_{i}[R_{0} > t] - \Prob_{i}[A_{0} R_{-1} > t] \big| t^{\kappa - 1} \, \diff t\ = \int_{\R} \e^{\kappa t} |\Delta(i, t)| \, \diff t.\label{eq:def Iplus}
\end{gather}
It will be shown in Lemma \ref{lem:Iplus finite} that the assumptions of the theorem imply $I_{i}^{+}\le\|I_{i}^{+}\|_{1} < \infty$ for every $i\in\cS$.

We further note that, conditional on $\dualxi_{n} = \xi_{-n} = j$, the random variables $(\xi_{0}, \dualS_{n})$ and $R_{-n}$ are conditionally independent under $\Prob$, for any $j \in \cS$ and $n \in \N$. Moreover, the stationarity of $(\xi_{n}, R_{n})_{n \in \Z}$ implies $\Prob[R_{-n}\in\cdot\,|\dualxi_{n}=j]=\Prob_{j}[R_{0}\in\cdot]$. It follows that, for all $i\in\cS$ and $n\in\N$,
\begin{align*}
\Prob_{i}\big[R_{-n}>\e^{t-\dualS_{n}}\big]\ &=\ \Prob\big[\xi_{0}=i,R_{-n}>\e^{t-\dualS_{n}}\big]/\pi_{i}\\
&=\ \sum_{j\in\cS}\int_{\R}\Prob_{j}\big[R_{0}>\e^{t-x}\big]\ \Prob\big[\xi_{0}=i,\dualxi_{n}=j,\dualS_{n}\in\,\diff x\big]/\pi_{i}\\
&=\ \sum_{j\in\cS}\int_{\R}\Prob_{j}\big[R_{0}>\e^{t-x}\big]\ \Prob_{i}\big[\dualxi_{n}=j,\dualS_{n}\in\,\diff x\big].
\end{align*}

Choosing $n = 1$, we see that $G(i, t)$ satisfies the Markov renewal equation
\begin{align*}
G(i,t)\ &=\ \Delta(i,t)\ +\ \sum_{j\in\cS}\int_{\R}G(j,t-x)\ \Prob_{i}\big[\dualxi_{1}=j,\dualS_{1}\in\,\diff x\big]\\
&=\ \Delta(i,t)\ +\ \Erw_{i}G(\dualxi_{1},t-\dualS_{1}).
\end{align*}
Defining
$$
g^{(\kappa)}(i,t)\,:=\,\frac{\e^{\kappa t}g(i,t)}{\dualv_{i}(\kappa)}\quad\text{for any function }g:\cS\times\R\to\R,
$$
multiplying the equation above by $\e^{\kappa t}/\dualv_{i}(\kappa)$ (i.e., applying exponential tilting) yields
\begin{align*}
G^{(\kappa)}(i, t)\ =\ \Delta^{(\kappa)}(i, t)\ +\ \dErw_{i}^{(\kappa)}G^{(\kappa)}(\dualxi_{1},t-\dualS_{1}),
\end{align*}
and thus, after $n$ iterations for any $n\in\N$,
\begin{align*}
G^{(\kappa)}(i,t)\ =\ \sum_{k=0}^{n-1}\dErw_{i}\Delta^{(\kappa)}(\dualxi_{k},t-\dualS_{k})\ +\ \dErw_{i}^{(\kappa)}G^{(\kappa)}(\dualxi_{n},t-\dualS_{n})
\end{align*}
for all $i\in\cS$ and $t\in\R$. By \textsf{(ES-2)} of Lemma~\ref{lem:properties smoothing}, this equation remains valid after exponential smoothing, that is,
\begin{align}\label{eq:renewal id for Gbar_kappa}
\ovl{G^{(\kappa)}}(i,t)\ =\ \sum_{k=0}^{n-1}\dErw_{i}^{(\kappa)}\ovl{\Delta^{(\kappa)}}(\dualxi_{k},t-\dualS_{k})\ +\ \dErw_{i}^{(\kappa)}\ovl{G^{(\kappa)}}(\dualxi_{n},t-\dualS_{n}).
\end{align}
As $n\to\infty$, the last expectation vanishes for every $i$ and $t$, since $G$ is bounded and satisfies $\lim_{t \to \infty} \ovl{G}(i,t) = 0$ for each $i$, while $\lim_{n \to \infty} \dualS_{n} = -\infty$ $\Prob$-a.s. Moreover, we have
\begin{align}\label{eq:crucial remainder estimate}
\begin{split}
\dErw_{i}^{(\kappa)}\ovl{G^{(\kappa)}}(\dualxi_{n},t-\dualS_{n})\ &=\ \frac{\e^{\kappa t}}{\dualv_{i}(\kappa)}\,\Erw_{i}\Big[e^{-\kappa Z}G(\dualxi_{n},t-\dualS_{n}-Z)\Big]\\
&\le\ \frac{\e^{\kappa t}}{\dualv_{i}(\kappa)}\,\Erw_{i}G(\dualxi_{n},t-\dualS_{n}-Z),
\end{split}
\end{align}
where $Z$ denotes a standard exponential random variable independent of all other occurring random variables. Therefore, we conclude from \eqref{eq:renewal id for Gbar_kappa} that
\begin{align}\label{eq:crucial MRT formula}
\ovl{G}^{(\kappa)}(i,t)\ =\ \sum_{n\ge 0}\dErw_{i}^{(\kappa)}\ovl{\Delta}^{(\kappa)}(\dualxi_{n},t-\dualS_{n}).
\end{align}
By Lemma~\ref{lem:Iplus finite}, the function $\Delta^{(\kappa)}(i,\cdot)$ is Lebesgue integrable over $\R$ for each $i\in\cS$. Hence, by property \textsf{(ES-2)} of Lemma~\ref{lem:properties smoothing}, its exponential smoothing $\ovl{\Delta}^{(\kappa)}(i,\cdot)$ is a directly Riemann integrable function with the same integral. Furthermore, the assumptions of the theorem guarantee that $(\dualxi_{n},\dualS_{n}){n\ge 0}$ is a nonlattice \MRW\ with positive drift $\rho'(\kappa)$ under $\dProb^{(\kappa)}_{i}$. Therefore, we can apply the discrete Markov renewal theorem as stated in \cite[Thm.~3.2]{Alsmeyer:14} to obtain
\begin{align*}
\lim_{t\to\infty}\ovl{G^{(\kappa)}}(i,t)
\ &=\ \frac{1}{\rho'(\kappa)}\sum_{j\in\cS}\pi_{j}(\kappa)\int_{\R}\Delta^{(\kappa)}(j,x)\diff x\\
&=\ \frac{1}{\rho'(\kappa)}\sum_{j\in\cS}\frac{\pi_{j}(\kappa)}{\dualv_{j}(\kappa)}\int_{\R}\e^{\kappa x}\Delta(j,x)\diff x\ =\ \frac{1}{\rho'(\kappa)}\sum_{j\in\cS}\frac{\pi_{j}(\kappa)}{\dualv_{j}(\kappa)}I_{j}^{+},
\end{align*}
or, equivalently,
\begin{equation*}
\lim_{t\to\infty}\e^{\kappa t}\Prob_{i}[R_{0}>\e^{t}]
\ =\ \frac{\dualv_{i}(\kappa)}{\rho'(\kappa)}\sum_{j\in\cS}\frac{\pi_{j}(\kappa)}{\dualv_{j}(\kappa)}I_{j}^{+}
\ =:\ \Ciplus
\end{equation*}
for each $i\in\cS$. By equation~\eqref{eq:Iplusminus expectation} in Lemma~\ref{lem:Iplus finite}, the constant $\Ciplus$ can also be expressed as
\begin{equation}\label{eq:def Ciplus}
\Ciplus\ =\ \frac{\dualv_{i}(\kappa)}{\rho'(\kappa)}\sum_{j\in\cS}\frac{\pi_{j}(\kappa)}{{\dualv_{j}(\kappa)}}\Erw_{j}\Big[(R_{0}^{+})^{\kappa}-((A_{0}R_{-1})^{+})^{\kappa}\Big].
\end{equation}
This proves the right-tail part of~\eqref{eq1:left-right tail nu} and, as a direct consequence,
\begin{align*}
\lim_{t\to\infty}t^{\kappa}\,\Prob_{\pi}[R_{0}>t]
\ &=\ \sum_{i\in\cS}\pi_{i}\lim_{t\to\infty}t^{\kappa}\,\Prob_{i}[R_{0}>t]
\ =\ \sum_{i\in\cS}\pi_{i}\Ciplus
\ =\ \Cplus,
\end{align*}
that is, the right-tail part of~\eqref{eq3:left-right tail nu}.  Next, define
\begin{gather}
I_{i}^{-}\ :=\ \int_{0}^{\infty}\big(\Prob_{i}[R_{0}<-t] \,-\, \Prob_{i}[A_{0}R_{-1}<-t]\big)t^{\kappa-1}\,\diff t,\nonumber
\shortintertext{and}
\|I_{i}^{-}\|_{1}\ :=\ \int_{0}^{\infty}\big|\Prob_{i}[R_{0}<-t] \,-\, \Prob_{i}[A_{0}R_{-1}<-t]\big|t^{\kappa-1}\,\diff t,
\label{eq:def Iminus}
\intertext{which are finite by another appeal to Lemma~\ref{lem:Iplus finite}, and}
\Ciminus\ :=\ \frac{\dualv_{i}(\kappa)}{\rho'(\kappa)}
\sum_{j\in\cS}\frac{\pi_{j}(\kappa)}{\dualv_{j}(\kappa)}I_{j}^{-}
\ =\ \frac{\dualv_{i}(\kappa)}{\rho'(\kappa)}
\sum_{j\in\cS}\frac{\pi_{j}(\kappa)}{\dualv_{j}(\kappa)}
\Erw_{j}\Big[(R_{0}^{-})^{\kappa}-((A_{0}R_{-1})^{-})^{\kappa}\Big].
\label{eq:def Ciminus}
\end{gather}
Then the corresponding statements in~\eqref{eq1:left-right tail nu} and~\eqref{eq3:left-right tail nu} for the left tails of $\Prob_{i}[R_{0}\in\cdot]$ and $\Prob[R_{0}\in\cdot]$, with limiting constants $\Ciminus$ and $\Cminus:=\sum_{i\in\cS}\pi_{i}\Ciminus$, follow in the same manner. Indeed, replacing $R_{n}$ by $-R_{n}$ amounts to replacing $B_{n}$ by $-B_{n}$ while keeping $A_{n}$ unchanged.

\vspace{.2cm}
\cblue\textsc{Case 2}. \cblack $\Prob[A_{1}<0\hspace{.8pt}]>0$. 
Naturally, we can restrict our attention to the right tail of $R_{0}$ here as well. 
The main idea of the proof is to reduce this situation to the first case by comparing $R_{0}$ with 
$\dualPi_{\sigma}\dualR_{\sigma} \,=\, \itPi_{-\sigma}R_{-\sigma}$, where
\begin{equation}\label{eq:sign switch}
\sigma\ :=\ \inf\{n\ge 1:\Pi_{-n}\ge 0\}\ =\ 
\begin{cases}
\hfill 1&\text{if }A_{0}\ge 0,\\
\inf\{n\ge 2:A_{-n+1}\le 0\}&\text{otherwise}.
\end{cases}
\end{equation}
Introducing the bivariate ``sign chain'' 
$(\dualxi_{n},\sign(\dualPi_{n}))_{n\ge 0}
 = (\xi_{-n},\sign(\itPi_{-n}))_{n\ge 0}$ 
with state space $\cS\times\{-1,0,+1\}$, 
we observe that $\sigma$ is the first hitting time of $\cS\times\{0,+1\}$ and is $\Prob$-a.s.~finite, 
since $\Prob_{i}[\xi_{-1}=j,\,A_{0}<0\hspace{.8pt}]>0$ for some $i,j\in\cS$, 
and $(\xi_{-n})_{n\ge 0}$ is recurrent.

We note that under $\dProb^{(\kappa)}$, the sign $\delta=0$ does not occur almost surely, since
$$
\dProb_{i}^{(\kappa)}[A_{1}=0]\,=\, \frac{1}{v_{i}(\kappa)}\,\Erw_{i}\!\big[|A_{1}|^{\kappa}v_{\xi_{1}}(\kappa)\1_{\{A_{1}=0\}}\big]
 \,=\, 0
 \quad\text{for every }i\in\cS.
$$
Therefore, the state space reduces to $\cS\times\{-1,+1\}$, and $\sigma$ becomes the first hitting time of $\cS\times\{+1\}$. 
By \textsf{(B+)}, the sign chain is irreducible and positive recurrent on $\cS\times\{-1,+1\}$, 
with transition probabilities
$$
\dProb^{(\kappa)}\!\big[(\xi_{-n},\sign(\itPi_{-n}))=(j,\pm\delta)\,\big|\,(\xi_{-n+1},\sign(\itPi_{-n+1}))=(i,\delta)\big]
 \,=\, \dualp_{ij}^{\,\pm}(\kappa)$$
for $\delta\in\{-1,+1\}$ and $n\ge 1$, 
where (see~\eqref{eq:def pplusminus})
\begin{equation}\label{eq:def dual pplusminus}
\dualp_{ij}^{\,-}(\kappa)\,:=\,\dProb_{i}^{(\kappa)}[\xi_{-1}=j,\,A_{0}<0\hspace{.8pt}],
\quad\text{and}\quad
\dualp_{ij}^{\,+}(\kappa)\,:=\,\dProb_{i}^{(\kappa)}[\xi_{-1}=j,\,A_{0}>0\hspace{.8pt}].
\end{equation}
Thus, transitions from $(i,\delta)$ to $(j,\pm\delta)$ depend only on $i\in\cS$.

\vspace{.1cm}
Denote by $(\pi_{i,\delta}(\kappa))_{i\in\cS,\delta=\pm 1}$ the stationary law of the sign chain, 
and note that $\pi_{i}(\kappa)=\pi_{i,-1}(\kappa)+\pi_{i,+1}(\kappa)$. 
Positive recurrence ensures $\dProb^{(\kappa)}[\sigma<\infty]=1$ and, in particular,
\begin{equation}\label{eq:Ehat sigma finite}
\dErw_{i}^{(\kappa)}\sigma\,<\,\infty 
\quad\text{for all } i\in\cS,
\end{equation}
a fact we will use repeatedly.

\vspace{.1cm}
Further defining $\sigma_{0}:=0$, $\sigma_{1}:=\sigma$, and
$$ \sigma_{n}\,:=\,\inf\{k>\sigma_{n-1}:A_{-\sigma_{n-1}}\cdots A_{-k+1}\ge 0\},
\quad n\ge 2, $$
the sequence $(\dualxi_{n}^{*},\dualS_{n}^{*}):=(\xi_{-\sigma_{n}},S_{-\sigma_{n}})$ for $n\ge 0$ 
constitutes a \MRW\ under $\dProb^{(\kappa)}$. 
Let $\pi^{*}(\kappa)=(\pi_{i}^{*}(\kappa))_{i\in\cS}$ denote the stationary distribution of 
its positive recurrent driving chain $(\dualxi_{n}^{*})_{n\ge 0}$ under $\dProb^{(\kappa)}$, 
and observe that $(\dualxi_{n}^{*},\dualR_{n}^{*})_{n\ge 0}$, with
$$
\dualR_{n}^{*}\ :=\ \dualR_{\sigma_{n}}
 \,=\, \sum_{k\ge 0}\dualA_{\sigma_{n}}\cdots\dualA_{\sigma_{n}-k+1}\dualB_{\sigma_{n}-k}
 \,=\, \sum_{k\ge 0}A_{-\sigma_{n}}\cdots A_{-\sigma_{n}+k-1}B_{-\sigma_{n}+k}
 \,=\, R_{-\sigma_{n}},
$$
is stationary under $\dProb_{\pi^{*}(\kappa)}^{(\kappa)}$. 
Since, furthermore,
$$
\dualR_{n}^{*}\ =\ \dualPsi_{n}^{*}(\dualR_{n-1}^{*})
\quad\text{with}\quad
\dualPsi_{n}^{*}\,:=\,\dualPsi_{\sigma_{n-1}}\cdots\dualPsi_{\sigma_{n}-1},
$$
we see that $(\dualxi_{n}^{*},\dualR_{n}^{*})_{n\ge 0}$ is of the same type as 
$(\dualxi_{n},\dualR_{n})_{n\ge 0}$, obtained by subsampling, 
but with associated affine maps $\dualPsi_{n}^{*}$ having positive slope 
$\dProb_{\pi^{*}(\kappa)}^{(\kappa)}$-a.s. 
It is shown in the next lemma that $\pi^{*}(\kappa)$ and $\pi(\kappa)$ actually coincide, 
so that $\dProb_{\pi^{*}(\kappa)}^{(\kappa)}=\dProb_{\pi(\kappa)}^{(\kappa)}$.

\begin{Lemma}\label{lem:signed chain stationary law}
Under the stated assumptions,
\begin{equation}\label{eq:signed chain stationary law}
\pi_{i,-}(\kappa)\ =\ \pi_{i,+}(\kappa)\ =\ \frac{\pi_{i}(\kappa)}{2}\ =\ \frac{\pi_{i}^{*}(\kappa)}{2}
\end{equation}
for all $i\in\cS$. In particular, $\pi(\kappa)=\pi^{*}(\kappa)$. Moreover, $\dErw_{\pi(\kappa)}^{(\kappa)}\sigma=2$.
\end{Lemma}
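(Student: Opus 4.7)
The plan is to exploit a sign-flipping symmetry of the bivariate chain $(\dualxi_n,\sign(\dualPi_n))_{n\ge 0}$ on $\cS\times\{-1,+1\}$. As already described just before the lemma, its transition from $(i,\delta)$ to $(j,\delta')$ has probability $\dualp_{ij}^{+}(\kappa)$ when $\delta'=\delta$ (no sign change) and $\dualp_{ij}^{-}(\kappa)$ when $\delta'=-\delta$ (sign flip), because $\sign(\dualPi_{n})=\sign(\dualPi_{n-1})\cdot\sign(\dualA_{n-1})$. In particular, this one-step kernel depends on the signs only through the product $\delta\delta'$. Under the additional assumption \textsf{(B+)}, all $\dualp_{ij}^{\pm}(\kappa)$ are strictly positive, so the transition matrix of the bivariate chain is (entrywise) positive on the finite state space $\cS\times\{-1,+1\}$. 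Hence the chain is irreducible and automatically positive recurrent, and its stationary distribution is unique.

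Now let $\Phi:(i,\delta)\mapsto(i,-\delta)$ denote the global sign-flip involution. By the observation above, the transition kernel is invariant under $\Phi$, so if $\mu=(\pi_{i,\delta}(\kappa))$ is the stationary law then $\mu\circ\Phi^{-1}$ is stationary as well. Uniqueness forces $\mu\circ\Phi^{-1}=\mu$, i.e.\ $\pi_{i,+}(\kappa)=\pi_{i,-}(\kappa)$ for every $i\in\cS$. The first-coordinate marginal of $\mu$ must coincide with the unique stationary law of $(\dualxi_n)$ under $\dProb^{(\kappa)}$, which is $\pi(\kappa)$; hence $\pi_{i,+}(\kappa)+\pi_{i,-}(\kappa)=\pi_i(\kappa)$, and combining this with the previous equality yields $\pi_{i,\pm}(\kappa)=\pi_i(\kappa)/2$. (As a sanity check, direct substitution shows this is stationary, using $\dualp_{ij}^{+}(\kappa)+\dualp_{ij}^{-}(\kappa)=\dualp_{ij}(\kappa)$ and the stationarity of $\pi(\kappa)$ for $\dualP(\kappa)$.)

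The two remaining identities follow from standard facts about trace chains. Setting $B:=\cS\times\{+1\}$, the times $\sigma_n$ are precisely the successive hitting times of $B$ by the bivariate chain, and $(\dualxi_n^{*})_{n\ge 0}$ is the first-coordinate projection of the resulting trace chain on $B$. For an irreducible positive recurrent chain, the stationary law of the trace chain on $B$ equals $\mu|_{B}/\mu(B)$; since $\mu(B)=\sum_{i}\pi_{i,+}(\kappa)=1/2$, we obtain $\pi_i^{*}(\kappa)=(\pi_i(\kappa)/2)/(1/2)=\pi_i(\kappa)$, i.e.\ $\pi^{*}(\kappa)=\pi(\kappa)$. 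Finally, Kac's recurrence formula applied to the same bivariate chain and the set $B$ says that the mean first return time to $B$ starting from the conditional stationary distribution $\mu(\,\cdot\,|\,B)$ equals $1/\mu(B)=2$. Under $\dProb_{\pi(\kappa)}^{(\kappa)}$ the chain starts at $(\xi_0,+1)$ with $\xi_0$ distributed as the first-coordinate marginal of $\mu(\,\cdot\,|\,B)$, which is exactly $\pi(\kappa)$, and $\sigma$ is by definition this first return time, giving $\dErw_{\pi(\kappa)}^{(\kappa)}\sigma=2$. The only mild technical point is the a.s.\ finiteness of $\sigma$ and of $\dErw_{i}^{(\kappa)}\sigma$, but both are guaranteed by \eqref{eq:Ehat sigma finite} together with \textsf{(B+)}, so no additional work is required.
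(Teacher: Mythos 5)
Your proof is correct, and it takes a somewhat different route for the final two identities. For $\pi_{i,-}(\kappa)=\pi_{i,+}(\kappa)=\pi_{i}(\kappa)/2$, your argument via the sign-flip involution $\Phi$ and uniqueness of the stationary law is essentially the same symmetry that the paper exploits, but the paper phrases it through return times of individual states $(i,\pm 1)$ combined with the formula $\pi_{i,\delta}(\kappa)=1/\dErw_{(i,\delta)}^{(\kappa)}[\text{return time to }(i,\delta)]$. Your packaging via $\mu\circ\Phi^{-1}=\mu$ is a bit cleaner and avoids having to argue that $\sigma(i)$ has the same law under $\dProb_{(i,\pm 1)}^{(\kappa)}$, which in the paper requires a slightly delicate reinterpretation of $\sigma(i)$ as a return time when the sign is flipped.

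Where you genuinely diverge from the paper is in establishing $\pi^{*}(\kappa)=\pi(\kappa)$ and $\dErw_{\pi(\kappa)}^{(\kappa)}\sigma=2$. The paper derives $\pi_{i}^{*}(\kappa)=\sfm\,\pi_{i,+}(\kappa)$ by a Birkhoff/Markov-chain ergodic theorem argument counting occupation of $\{i\}\times\{+1\}$ along $[1,\sigma_{n}]$ and then sums over $i$ to force $\sfm=2$. You instead invoke the textbook facts that the trace chain on a positive set $B$ has stationary law $\mu|_{B}/\mu(B)$, and that Kac's formula gives $\dErw_{\mu(\cdot\,|\,B)}[\tau_{B}]=1/\mu(B)$. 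Both approaches are valid; yours buys a shorter argument that does not pass through an ergodic limit, at the cost of quoting two (standard, but uncited) facts about trace chains and Kac's lemma, whereas the paper's argument is self-contained modulo the occupation measure formula it already states in the appendix. One small point worth making explicit in your version: the bivariate chain $(\dualxi_{n},\sign(\dualPi_{n}))_{n\ge 0}$ under $\dProb_{\pi(\kappa)}^{(\kappa)}$ starts in $B=\cS\times\{+1\}$ with law exactly $\mu(\,\cdot\,|\,B)$ (you do say this), so the hypotheses of both the trace-chain identity and Kac's formula are met; irreducibility on $\cS\times\{-1,+1\}$ comes from \textsf{(B+)}, as you note.
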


\begin{proof}
We first note that the law of
$$ \sigma(i)\ :=\ \inf\{n\ge 1:\xi_{-n}=i\text{ and }\sign(\itPi_{-n})=1\} $$
is identical under $\dProb_{(i,-1)}^{(\kappa)}$ and $\dProb_{(i,+1)}^{(\kappa)}$, 
since transitions of the sign chain from any $(i,\delta)$ to any other state in 
$\cS\times\{-1,+1\}$ depend only on $i$. 
Observe also that $\sigma(i)$ equals the return time to $(i,\delta)$ when starting in this state, 
for each $\delta\in\{\pm1\}$. 
Consequently, the return times of the states $(i,+1)$ and $(i,-1)$ under $\dProb_{i}^{(\kappa)}$ coincide, yielding in particular
$$ \frac{1}{\dErw_{i}^{(\kappa)}[\sigma(i)]}\ =\ \pi_{i,-}(\kappa)\ =\ \pi_{i,+}(\kappa)\ =\ \frac{\pi_{i}(\kappa)}{2}. $$
For the final equality in~\eqref{eq:signed chain stationary law}, note that 
$(\sigma_{n})_{n\ge 1}$ forms a stationary and ergodic sequence under 
$\dProb_{\pi^{*}(\kappa)}^{(\kappa)}$. 
Hence, by Birkhoff's ergodic theorem, 
$n^{-1}\sigma_{n}\to\dErw_{\pi^{*}(\kappa)}^{(\kappa)}\sigma=:\sfm$ a.s.~under this measure. 
Applying the ergodic theorem for Markov chains, we obtain
\begin{align*}
\pi_{i}^{*}(\kappa)
&=\ \lim_{n\to\infty}\frac{1}{n}\sum_{k=1}^{n}\1_{\{i\}}(\xi_{-\sigma_{k}})\\
&=\ \lim_{n\to\infty}\frac{1}{n}\sum_{k=1}^{\sigma_{n}}\1_{\{i\}\times\{+1\}}(\xi_{-k},\sign(\itPi_{-k}))\\
&=\ \lim_{n\to\infty}\frac{\sigma_{n}}{n}\cdot
    \lim_{n\to\infty}\frac{1}{\sigma_{n}}\sum_{k=1}^{\sigma_{n}}\1_{\{i\}\times\{+1\}}(\xi_{-k},\sign(\itPi_{-k}))\\
&=\ \sfm\cdot\pi_{i,+}(\kappa)\ =\ \frac{\sfm}{2}\,\pi_{i}(\kappa)
\quad\dProb_{\pi^{*}(\kappa)}^{(\kappa)}\text{-a.s.}
\end{align*}
for all $i\in\cS$. 
Summing over all $i$ on both sides gives $\sfm=2$, and hence $\pi(\kappa)=\pi^{*}(\kappa)$.
\end{proof}

\vspace{.1cm}
After these considerations, the announced reduction of Case~2 to Case~1 -- by comparing $R_{0}$ with $\dualPi_{\sigma}\dualR_{\sigma}=\itPi_{-\sigma}R_{-\sigma}$ rather than with $(\dualPi_{1},\dualR_{1})$ -- amounts to working with the sequence $(\dualxi_{n}^{*},\dualR_{n}^{*})_{n\ge 0}$, whose associated \MRW\ is $(\dualxi_{n}^{*},\dualS_{n}^{*})_{n\ge 0}$, instead of $(\dualxi_{n},\dualR_{n})_{n\ge 0}$.

\vspace{.1cm}
To be precise, we define
\begin{gather}
\Delta_{\sigma}(i,t)\,:=\,\Prob_{i}[R_{0}>\e^{t}] \,-\, \Prob_{i}[\itPi_{-\sigma}R_{-\sigma}>\e^{t}]
\,=\, \Prob_{i}[R_{0}>\e^{t}] \,-\, \Prob_{i}[R_{-\sigma}>\e^{t-\dualS_{\sigma}}],\nonumber\\
I_{\sigma,i}^{+}\ :=\ \int_{0}^{\infty}\big(\Prob_{i}[R_{0}>t] \,-\, \Prob_{i}[\itPi_{-\sigma}R_{-\sigma}>t]\big)t^{\kappa-1}\,\diff t
\ =\ \int_{\R}\e^{\kappa t}\Delta_{\sigma}(i,t)\,\diff t,\label{eq:def Isigmaplus}
\shortintertext{and}
I_{\sigma,i}^{-}\ :=\ \int_{0}^{\infty}\big(\Prob_{i}[R_{0}<-t] \,-\, \Prob_{i}[\itPi_{-\sigma}R_{-\sigma}<-t]\big)t^{\kappa-1}\,\diff t.\label{eq:def Isigmaminus}
\end{gather}
Moreover, let $\|I_{\sigma,i}^{+}\|_{1}$ and $\|I_{\sigma,i}^{-}\|_{1}$ be the obvious counterparts of $\|I_{i}^{+}\|_{1}$ and $\|I_{i}^{-}\|_{1}$ defined in \eqref{eq:def Iplus} and \eqref{eq:def Iminus}, respectively. Proceeding as in Case~1, we find that $G(i,t)$ satisfies the Markov renewal equation
$$ G(i,t)\ =\ \Delta_{\sigma}(i,t)\ +\ \Erw_{i}\!\big[G(\dualxi_{1}^{*},t-\dualS_{1}^{*})\big], $$
which after exponential tilting, smoothing and $n$ iterations takes the form
\begin{align}\label{eq:renewal id for Gbar_kappa, Case 2}
\ovl{G^{(\kappa)}}(i,t)\ =\ \sum_{k=0}^{n-1}\dErw_{i}^{(\kappa)}\ovl{\Delta_{\sigma}^{(\kappa)}}(\dualxi_{k}^{*},t-\dualS_{k}^{*})\ +\ \dErw_{i}^{(\kappa)}\ovl{G^{(\kappa)}}(\dualxi_{n}^{*},t-\dualS_{n}^{*}).
\end{align}
for all $i\in\cS$ and $t\in\R$. The last expectation vanishes as $n\to\infty$, since the inequality \eqref{eq:crucial remainder estimate} remains valid with $(\dualxi_{n}^{*},\dualS_{n}^{*})$ in the place of $(\dualxi_{n},\dualS_{n})$. Thus we obtain
\begin{align}\label{eq:crucial MRT formula Case 2}
\ovl{G^{(\kappa)}}(i,t)\ =\ \sum_{n\ge 0}\dErw_{i}^{(\kappa)}\ovl{\Delta_{\sigma}^{(\kappa)}}(\dualxi_{n}^{*},t-\dualS_{n}^{*}).
\end{align}
which serves as our analogue of~\eqref{eq:crucial MRT formula} in the present case. 

\vspace{.1cm}
To obtain the (right-tail part of)~\eqref{eq2:left-right tail nu} with the asserted constant 
$(\Ciminus+\Ciplus)/2$ via another application of the Markov renewal theorem, we must verify the finiteness of $\|I_{\sigma,i}^{\pm}\|_{1}$ and the necessary properties for the new \MRW\ $(\dualxi_{n}^{*},\dualS_{n}^{*})_{n\ge 0}$ under $\dProb^{(\kappa)}$ 
(positive drift, lattice type). This is done in the two Lemmata~\ref{lem:sampling at positive sign} and \ref{lem:Isigma finite} in the auxiliary section. With these results at hand, we infer from~\eqref{eq:crucial MRT formula Case 2} that
$$ \lim_{t\to\infty}\ovl{G^{(\kappa)}}(i,t)\ =\ \frac{1}{\dErw^{(\kappa)}[\dualS_{1}^{*}]}
\sum_{j\in\cS}\pi_{j}(\kappa)\int_{\R}\Delta_{\sigma}^{(\kappa)}(j,x)\,\diff x
\ =\ \frac{1}{2\rho'(\kappa)}\sum_{j\in\cS}\frac{\pi_{j}(\kappa)}{\dualv_{j}(\kappa)}I_{\sigma,j}^{+},
$$
which is equivalent to
\begin{equation}\label{eq:formula C}
\lim_{t\to\infty}t^{\kappa}\,\Prob_{i}[R_{0}>t]
\ =\ \frac{\dualv_{i}(\kappa)}{2\rho'(\kappa)}\sum_{j\in\cS}\frac{\pi_{j}(\kappa)}{\dualv_{j}(\kappa)}I_{\sigma,j}^{+}\ =:\ C_{i},
\end{equation}
for each $i\in\cS$, recalling that $t^{\kappa}\,\Prob_{i}[R_{0}>t]=\dualv_{i}(\kappa)G^{(\kappa)}(i,t)$ and the limits of $G^{(\kappa)}(i,t)$ and $\ovl{G^{(\kappa)}}(i,t)$ coincide (see \eqref{eq:convergence and smoothing}).  
It thus only remains to verify that $C_{i}=(\Ciminus+\Ciplus)/2$, 
which in fact amounts to computing the $I_{\sigma,i}^{\pm}$ in terms of the $I_{j}^{\pm}$.

\vspace{.1cm}
To this end, we start from~\eqref{eq:Isigmaplusminus expectation} for $I_{\sigma,i}^{+}$ and obtain
\begin{align*}
I_{\sigma,i}^{+}\ &=\ \Erw_{i}\Big[(R_{0}^{+})^{\kappa}-((\Pi_{-\sigma}R_{-\sigma})^{+})^{\kappa}\Big]\\
&=\ \sum_{k\ge 0}\sum_{n=0}^{k-1}\Erw_{i}\Big[\big(((\Pi_{-n}R_{-n})^{+})^{\kappa}-((\Pi_{-n-1}R_{-n-1})^{+})^{\kappa}\big)\1_{\{\sigma=k\}}\Big]\\
&=\ \Erw_{i}\Big[(R_{0}^{+})^{\kappa}-((A_{0}R_{-1})^{+})^{\kappa}\Big]+\sum_{n\ge 1}\Erw_{i}\Big[\big(((\Pi_{-n}R_{-n})^{+})^{\kappa}-((\Pi_{-n-1}R_{-n-1})^{+})^{\kappa}\big)\1_{\{\sigma=k\}}\Big]\\
&=\ I_{i}^{+}\,+\,\sum_{n\ge 1}\Erw_{i}\Big[|\Pi_{-n}|^{\kappa}\,\Erw\big[(R_{-n}^{-})^{\kappa}-((A_{-n}R_{-n-1})^{-})^{\kappa}|\xi_{-n}\big]\1_{\{\sigma>n\}}\Big]\\
&=\ I_{i}^{+}\,+\,\sum_{n\ge 1}\Erw_{i}\Big[e^{\kappa\dualS_{n}}I_{\dualxi_{n}}^{-}\1_{\{\sigma>n\}}\Big]\ =\ I_{i}^{+}\,-\,I_{i}^{-}\,+\,\Erw_{i}\Bigg[\sum_{n=0}^{\sigma-1}e^{\kappa\dualS_{n}}I_{\dualxi_{n}}^{-}\Bigg]
\end{align*}
For the penultimate line, observe that $\itPi_{-n}<0$ on the event $\{\sigma>n\}$ implies
\begin{gather*}
(\Pi_{-n}R_{-n})^{+}\ =\ |\Pi_{-n}|R_{-n}^{-}\quad\text{and}\quad (\Pi_{-n-1}R_{-n-1})^{+}\ =\ |\Pi_{-n}|(A_{-n-1}R_{-n})^{-}
\end{gather*}
for each $n\ge 1$. Returning to~\eqref{eq:formula C}, the proof of~\eqref{eq2:left-right tail nu} is now completed by the following calculation:
\begin{align*}
C_{i}\ &=\ \frac{\dualv_{i}(\kappa)}{2\rho'(\kappa)}\sum_{j\in\cS}\frac{\pi_{j}(\kappa)}{\dualv_{j}(\kappa)}I_{\sigma,j}^{+}\\
&=\ \frac{\dualv_{i}(\kappa)}{2\rho'(\kappa)}\sum_{j\in\cS}\frac{\pi_{j}(\kappa)}{\dualv_{j}(\kappa)}\Big(I_{j}^{+}-I_{j}^{-}\Big)\,+\,\frac{\dualv_{i}(\kappa)}{2\rho'(\kappa)}\sum_{j\in\cS}\frac{\pi_{j}(\kappa)}{\dualv_{j}(\kappa)}\,\Erw_{j}\Bigg[\sum_{n=0}^{\sigma-1}\e^{\kappa\dualS_{n}}I_{\dualxi_{n}}^{-}\Bigg]\\
&=\ \frac{\Ciplus-\Ciminus}{2}\,+\,\frac{\dualv_{i}(\kappa)}{2\rho'(\kappa)}\sum_{j\in\cS}\pi_{j}(\kappa)\,\dErw_{j}^{(\kappa)}\Bigg[\sum_{n=0}^{\sigma-1}\frac{1}{\dualv_{\dualxi_{n}}(\kappa)}I_{\dualxi_{n}}^{-}\Bigg]\\
&=\ \frac{\Ciplus-\Ciminus}{2}\,+\,\frac{\dualv_{i}(\kappa)}{\rho'(\kappa)}\,\frac{1}{\dErw_{\pi(\kappa)}^{(\kappa)}\sigma}\,\dErw_{\pi(\kappa)}^{(\kappa)}\Bigg[\sum_{n=0}^{\sigma-1}\frac{1}{\dualv_{\dualxi_{n}}(\kappa)}I_{\dualxi_{n}}^{-}\Bigg]\\
&=\ \frac{\Ciplus-\Ciminus}{2}\,+\,\frac{\dualv_{i}(\kappa)}{\rho'(\kappa)}\,\dErw_{\pi(\kappa)}^{(\kappa)}\Bigg[\frac{1}{\dualv_{\dualxi_{0}}(\kappa)}I_{\dualxi_{0}}^{-}\Bigg]\ =\ \frac{\Ciplus-\Ciminus}{2}\,+\,\frac{\dualv_{i}(\kappa)}{\rho'(\kappa)}\sum_{j\in\cS}\frac{\pi_{j}(\kappa)}{\dualv_{j}(\kappa)}I_{j}^{-}\\
&=\ \frac{\Ciplus-\Ciminus}{2}\,+\,\Ciminus\ =\ \frac{\Ciplus+\Ciminus}{2}.
\end{align*}
To go from the fourth to the fifth line, the occupation measure formula \eqref{eq:occupation measure formula} stated in the Appendix is used. It provides
\begin{equation*}
\big({\dErw_{\pi(\kappa)}^{(\kappa)}\sigma}\big)^{-1}\,\dErw_{\pi(\kappa)}^{(\kappa)}\Bigg[\sum_{n=0}^{\sigma-1}\frac{1}{\dualv_{\dualxi_{n}}(\kappa)}I_{\dualxi_{n}}^{-}\Bigg]\ =\ \dErw_{\pi(\kappa)}^{(\kappa)}\Bigg[\frac{1}{\dualv_{\dualxi_{1}}(\kappa)}I_{\dualxi_{1}}^{-}\Bigg]
\end{equation*}
and relies on the two facts that the laws of $\dualxi_{0}$ and $\dualxi_{\sigma}$ coincide under $\dProb_{\pi^{*}(\theta)}^{(\theta)}=\dProb_{\pi(\theta)}^{(\theta)}$.

\vspace{.2cm}
We note that, by \eqref{eq:def Ciplus} and \eqref{eq:def Ciminus}, 
\begin{equation}\label{eq:def Ciplus+Ciminus}
\frac{\Ciplus+\Ciminus}{2}\ =\ \frac{\dualv_{i}(\kappa)}{2\rho'(\kappa)}\sum_{j\in\cS}\frac{\pi_{j}(\kappa)}{{\dualv_{j}(\kappa)}}\Erw_{j}\Big[|R_{0}|^{\kappa}-|A_{0}R_{-1}|^{\kappa}\Big].
\end{equation}

\subsection*{Positivity of $\Ciplus+\Ciminus$}
To complete the proof of Theorem~\ref{thm:main_{1}}, we finally show that $\Ciplus+\Ciminus>0$ for each $i\in\cS$ if $(\xi_{n},R_{n})_{n\ge 0}$ is nondegenerate. This will be done by identifying $\Prob_{i}[R_{0}\in\cdot]$ for each $i\in\cS$ as the stationary law of a nondegenerate -- in the sense of \eqref{nondegenerate iid} -- random affine recursion with i.i.d.~coefficients and then using the known result in this case.

\vspace{.1cm}
For each $i\in\cS$, let $(\tau_{n}(i))_{n\in\Z}$ be the increasing sequence of return times to $i$ of the doubly infinite stationary Markov chain $(\xi_{n})_{n\in\Z}$, with the convention that
$$ \tau_{0}(i)\,\le\,0\,<\,\tau_{1}(i), $$
thus $\tau(i):=\tau_{1}(i)=\inf\{n\ge 1:\xi_{n}=i\}$. Put $\sfm_{i}:=\Erw_{i}\tau(i)$. Further define $R_{n}^{(i)}:=R_{\tau_{n}(i)}$ and observe that
\begin{equation*}
R_{n}^{(i)}\,=\,A_{n}^{(i)}R_{n-1}^{(i)}+B_{n}^{(i)}
\end{equation*}
for every $n\in\Z$, where
\begin{gather*}
A_{n}^{(i)}\,:=\,\prod_{k=\tau_{n-1}(i)+1}^{\tau_{n}(i)}A_{k}\quad\text{and}\quad B_{n}^{(i)}\,:=\,\sum_{k=\tau_{n-1}(i)+1}^{\tau_{n}(i)}\bigg(\prod_{l=k+1}^{\tau_{n}(i)}A_{l}\bigg)B_{k}.
\end{gather*}
Under $\Prob_{i}$, i.e. when $\tau_{0}(i)=0$, the $(A_{n}^{(i)},B_{n}^{(i)})$ are i.i.d.~and, as $(R_n)_{n\geq 0}$ is stationary, also $(R_{n}^{(i)})_{n\ge 0}$ is a stationary sequence with
\begin{gather}\label{eq:stationary Rzero^i}
R_{0}^{(i)}\ =\ 
 B_{0}^{(i)}+A_{0}^{(i)}B_{-1}^{(i)}+A_{0}^{(i)}A_{-1}^{(i)}B_{-2}^{(i)}+\ldots\ =\ R_{0},
\end{gather}
that is, the stationary law of $(R_{n}^{(i)})_{n\ge 0}$ is the law of $R_{0}$ under $\Prob_{i}$.
With this observation, the convergence of
$$ t^{\kappa}\,\Prob_{i}[R_{0}^{(i)}>t]\quad\text{and}\quad t^{\kappa}\,\Prob_{i}[R_{0}^{(i)}<-t] $$
can also be inferred from the Goldie-Kesten theorem for systems with i.i.d.~coefficients, provided that
\begin{gather}
\Erw_{i}\big|A_{1}^{(i)}\big|^{\kappa}\,=\,1,\label{Ai1}\\
\Prob\big[\log|A_{1}^{(i)}|\in d\Z\,|\,A_{1}^{(i)}\ne 0\big]\,<\,1\quad\text{for any }d>0,\label{Ai2}
\shortintertext{and}
\Erw\Big[\big|A_{1}^{(i)}\big|^{\kappa}\log|A_{1}^{(i)}|\Big]<\infty,\quad\text{and}\quad\Erw\Big[\big|B_{1}^{(i)}\big|^{\kappa}\Big]<\infty\label{Ai3}
\end{gather}
are valid under the assumptions of Theorem \ref{thm:main_{1}}. But Condition \eqref{Ai1} holds because
\begin{gather*}
\Erw_{i}\big|A_{1}^{(i)}\big|^{\kappa}\ =\ \Erw_{i}\e^{\kappa S_{\tau(i)}}\ =\ \Erw_{i}\big[\e^{\kappa S_{\tau(i)}}\1_{\{\tau(i)<\infty\}}\big]\ =\ \dProb_{i}^{(\kappa)}[\tau(i)<\infty]\ =\ 1,
\intertext{and \eqref{Ai3} follows from}
\begin{split}
\Erw_{i}\Big[\big|A_{1}^{(i)}\big|^{\kappa}\log|A_{1}^{(i)}|\Big]\ &=\ \Erw_{i}\big[\e^{\kappa S_{\tau(i)}}|S_{\tau(i)}|\big]\ =\ \dErw_{i}^{(\kappa)}|S_{\tau(i)}|\ \le\ \dErw_{i}^{(\kappa)}\Bigg[\sum_{k=1}^{\tau(i)}|X_{k}|\Bigg]\\
&=\ \dErw_{i}^{(\kappa)}[\tau(i)]\,\dErw_{\pi}^{(\kappa)}|S_{1}|\ =\ \dErw_{i}^{(\kappa)}[\tau(i)]\,\Erw_{i}|A_{1}|^{\kappa}\log|A_{1}|\ <\ \infty,
\end{split}
\end{gather*}
where the occupation measure formula \eqref{eq:occupation measure formula} was used for the penultimate equality. Lastly, the lattice-type condition \eqref{Ai2} is obtained by another appeal to Lemma~A.6 in \cite{Alsmeyer:97}.

\vspace{.1cm}
To finally infer
$$ \Ciplus+\Ciminus\ =\ \lim_{t\to\infty}t^{\kappa}\Big(\Prob_{i}[R_{0}^{(i)}>t]\,+\,\Prob_{i}[R_{0}^{(i)}<-t]\Big)\ >\ 0 $$
from Theorems 2.4.4 and 2.4.7 in \cite{BurDamMik:16},
we must verify that the $(A_{n}^{(i)},B_{n}^{(i)})_{n\ge 1}$ satisfy \eqref{nondegenerate iid}, for every $i\in\cS$. But this is a direct consequence of the nondegeneracy of $(\xi_{n},R_{n})_{n\ge 0}$ and the following lemma which has been shown in \cite[Lemma 4.1 and Prop.\,4.6]{AlsBuck:17a}.

\begin{Lemma}\label{lem:Buckmann's lemma}
For an \MMLIFS\ $(\xi_{n},R_{n})_{n\ge 0}$ satisfying the assumptions of Theorem \ref{thm:main_{1}}, the following assertions are equivalent:
\begin{itemize}\itemsep1pt
\item[(a)] $(\xi_{n},R_{n})_{n\ge 0}$ is nondegenerate.
\item[(b)] $\Prob[A_{1}^{(i)}c+B_{1}^{(i)}=c\,]<1$ for all $c\in\R$ and all $i\in\cS$.
\end{itemize}
\end{Lemma}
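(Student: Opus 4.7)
The plan is to prove the equivalence by contraposition in each direction. I expect $\neg\text{(a)}\Rightarrow\neg\text{(b)}$ to be essentially immediate, while $\neg\text{(b)}\Rightarrow\neg\text{(a)}$ will be the substantive part.

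For the easy direction, I assume that constants $\{c_{j}:j\in\cS\}$ exist such that the stationary sequence $(\xi_{n},R_{n})_{n\in\Z}$ satisfies $R_{n}=c_{\xi_{n}}$ a.s.\ for all $n$; this is precisely the reading of $\Prob_{\pi}[A_{1}c_{\xi_{1}}+B_{1}=c_{\xi_{0}}]=1$ along the trajectory, via the identification $R_{n}=A_{n}R_{n-1}+B_{n}$. Starting an excursion from $\xi_{0}=i$ with $R_{0}=c_{i}$, the block identity $R_{\tau_{1}(i)}=A_{1}^{(i)}R_{0}+B_{1}^{(i)}$ combined with $\xi_{\tau_{1}(i)}=i$ and $R_{\tau_{1}(i)}=c_{i}$ immediately yields $A_{1}^{(i)}c_{i}+B_{1}^{(i)}=c_{i}$ a.s.\ under $\Prob_{i}$, contradicting (b) at $c=c_{i}$.

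For the reverse direction, I suppose that (b) fails, i.e.\ there exist $i_{0}\in\cS$ and $c_{0}\in\R$ with $A_{1}^{(i_{0})}c_{0}+B_{1}^{(i_{0})}=c_{0}$ a.s.\ under $\Prob_{i_{0}}$. The subsampled i.i.d.\ chain $R_{n}^{(i_{0})}:=R_{\tau_{n}(i_{0})}$ obeys the affine recursion $R_{n}^{(i_{0})}=A_{n}^{(i_{0})}R_{n-1}^{(i_{0})}+B_{n}^{(i_{0})}$ with i.i.d.\ coefficients, and the moment computations carried out for \eqref{Ai1}--\eqref{Ai3} give $\Erw_{i_{0}}\log|A_{1}^{(i_{0})}|=\sfm_{i_{0}}\Erw_{\pi}\log|A_{1}|<0$ together with $\Erw_{i_{0}}|B_{1}^{(i_{0})}|^{\kappa}<\infty$. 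The classical Goldie--Maller theorem \cite[Thm.\,2.1]{GolMal:00} thus forces uniqueness of the stationary law of $(R_{n}^{(i_{0})})_{n\ge 0}$; since $\delta_{c_{0}}$ is invariant by assumption, it must coincide with that unique stationary law, so $\Prob_{\pi}[R_{0}=c_{0}\mid\xi_{0}=i_{0}]=1$.

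The substantive remaining task is to propagate this constancy to every other state. Set $c_{i_{0}}:=c_{0}$. For $j\in\cS$ and a path $i_{0}=j_{0},j_{1},\ldots,j_{m}=j$ of positive $P$-probability, the value of $R_{m}$ conditional on $\{\xi_{k}=j_{k}:0\le k\le m\}$ and $R_{0}=c_{0}$ is a function of the conditionally independent edge coefficients. If this conditional law were not a single Dirac mass, two realizations $r\ne r'$ could be continued along a common path back to $i_{0}$; independence of that continuation from the past would then yield two distinct values for $R_{\tau_{1}(i_{0})}$, contradicting $R_{\tau_{1}(i_{0})}=c_{0}$ a.s. Hence the conditional law is an atom $\delta_{c_{j}}$, and consistency of $c_{j}$ across different paths from $i_{0}$ to $j$ follows by applying the same cancellation argument to the concatenation of one path with the time-reversal of another, both continued back to $i_{0}$. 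With $\{c_{j}\}_{j\in\cS}$ so defined, one verifies directly that $R_{n}=c_{\xi_{n}}$ holds a.s.\ along the stationary chain, so (a) fails. The main obstacle is precisely this propagation step, for which the full technical argument is the content of \cite[Lemma~4.1 and Prop.\,4.6]{AlsBuck:17a}; it requires combining uniqueness of the return-chain invariant law with the edge-independence of $(A_{n},B_{n})$ conditional on $(\xi_{n-1},\xi_{n})$ and the irreducibility of $\Xi$.
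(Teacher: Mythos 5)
The paper supplies no internal proof of this lemma: it simply cites \cite[Lemma~4.1 and Prop.~4.6]{AlsBuck:17a}, and you yourself acknowledge at the end that the "full technical argument" lives there. So the question is whether your sketch is internally sound, and there is a concrete problem right at the start. You identify $\neg\text{(a)}$ with the existence of constants $c_j$ such that $R_n=c_{\xi_n}$ a.s., and then claim that "this is precisely the reading of $\Prob_\pi[A_1 c_{\xi_1}+B_1=c_{\xi_0}]=1$ via the identification $R_n=A_nR_{n-1}+B_n$." That identification is simply false as stated: substituting $R_n=c_{\xi_n}$ into $R_n=A_nR_{n-1}+B_n$ yields $A_1 c_{\xi_0}+B_1=c_{\xi_1}$, i.e.\ the \emph{forward} relation $\Psi_1(c_{\xi_0})=c_{\xi_1}$, whereas the paper's condition \eqref{nondegenerate MMLIFS} reads $A_1 c_{\xi_1}+B_1=c_{\xi_0}$, i.e.\ $\Psi_1(c_{\xi_1})=c_{\xi_0}$, with the indices of $c_{\xi_0}$ and $c_{\xi_1}$ swapped. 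These are genuinely different conditions when $|\cS|>1$, and the equivalence between them is exactly the kind of statement the lemma is supposed to encapsulate, not something one may assert for free. Unless you resolve this (e.g.\ by showing that the two degeneracy statements are equivalent under irreducibility of $P$ and $\Erw\log|A_1|<0$, or by arguing that \eqref{nondegenerate MMLIFS} should be read with the indices in the other order), both directions of your contraposition rest on an unproven translation, and the final "one verifies directly that $R_n=c_{\xi_n}$, so (a) fails" again jumps over exactly this index issue.

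The rest of the outline is the right idea and matches what one would expect the cited reference to do: subsample at return times to $i_0$, invoke the Goldie--Maller uniqueness of the invariant law for the i.i.d.\ recursion $(R_n^{(i_0)})$ to conclude $R_0=c_0$ under $\Prob_{i_0}$, and then propagate the Dirac mass to every other state by exploiting the conditional independence of edge coefficients given $\Xi$. Two secondary gaps in the propagation are worth flagging. First, the "cancellation" argument needs, conditionally on any admissible excursion path, a positive probability that the continuation's slope is nonzero; this requires something like \textsf{(B+)} (or at least that $\Prob_i[A_1\ne0,\xi_1=j]>0$ on every edge actually traversed), and you do not invoke it. Second, the consistency of $c_j$ across different paths is argued via "concatenation of one path with the time-reversal of another," which is not the right object: the dual chain runs on $\dualP$, not $P$, and the coefficients along a time-reversed path are governed by $\dualF$ rather than $F$; the correct argument is simply to pick a single return path $P_2$ from $j$ to $i_0$ and use that $\Psi_{P_2}$ (with a.s.\ nonzero slope on a positive-probability set) must send both candidate constants to $c_0$. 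These are fixable, but as written the sketch has a genuine hole at its opening step and a couple of loose joints later on.
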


\subsection*{Auxiliaries} 

The following lemma goes back to Goldie~\cite[Lemma~9.4]{Goldie:91}, where 
\eqref{eq:integration of abs tail differences} appeared as an identity, which is not true in general (the corrected form is stated e.g.~in \cite[p.\,53/54]{BurDamMik:16}).

\begin{Lemma}\label{lem:Goldie's lemma}
Let $X,Y$ be real-valued random variables and $\kappa>0$. Then
\begin{equation}\label{eq:integration of abs tail differences}
\int_{0}^{\infty}\!\big|\Prob[X>t]-\Prob[Y>t]\big|\,t^{\kappa-1}\,\diff t\ \le\ \frac{1}{\kappa}\,\Erw\Big|(X^{+})^{\kappa}-(Y^{+})^{\kappa}\Big|.
\end{equation}
If the last expectation is finite, then absolute value signs can be removed to give
\begin{equation}\label{eq:integration of tail differences}
\int_{0}^{\infty}\!\big(\Prob[X>t]-\Prob[Y>t]\big)\,t^{\kappa-1}\,\diff t\ =\ \frac{1}{\kappa}\,\Erw\Big[(X^{+})^{\kappa}-(Y^{+})^{\kappa}\Big].
\end{equation}
\end{Lemma}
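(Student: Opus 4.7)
The plan is to reduce both statements to a single pointwise identity that can be obtained by a short case analysis, and then apply Fubini--Tonelli. For real $X,Y$ and $t>0$, the layer-cake identity
\begin{equation*}
\int_{0}^{\infty}\!t^{\kappa-1}\big(\1_{\{X>t\}}-\1_{\{Y>t\}}\big)\,\diff t\ =\ \frac{1}{\kappa}\Big((X^{+})^{\kappa}-(Y^{+})^{\kappa}\Big)
\end{equation*}
should be checked by splitting the outcome space according to the sign of $X-Y$ and of $X^{+}, Y^{+}$: e.g., on $\{X>Y\geq 0\}$ the integrand vanishes outside $(Y,X]$ and equals $t^{\kappa-1}$ there, while on $\{X>0>Y\}$ it equals $t^{\kappa-1}\1_{(0,X]}(t)$; the remaining cases are either symmetric or trivial. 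Since $\1_{\{X>t\}}-\1_{\{Y>t\}}$ is of one sign (determined by $X\gtrless Y$) for every fixed $t>0$, taking absolute values yields the companion identity
\begin{equation*}
\int_{0}^{\infty}\!t^{\kappa-1}\big|\1_{\{X>t\}}-\1_{\{Y>t\}}\big|\,\diff t\ =\ \frac{1}{\kappa}\Big|(X^{+})^{\kappa}-(Y^{+})^{\kappa}\Big|.
\end{equation*}

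For inequality \eqref{eq:integration of abs tail differences}, I would start from $|\Prob[X>t]-\Prob[Y>t]|=|\Erw[\1_{\{X>t\}}-\1_{\{Y>t\}}]|\leq\Erw|\1_{\{X>t\}}-\1_{\{Y>t\}}|$, multiply by $t^{\kappa-1}$ and integrate over $(0,\infty)$. The resulting double integral has a nonnegative integrand, so Tonelli's theorem permits exchanging the order of integration, and the companion identity above immediately produces the right-hand side $\kappa^{-1}\,\Erw|(X^{+})^{\kappa}-(Y^{+})^{\kappa}|$.

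For equality \eqref{eq:integration of tail differences}, the extra assumption $\Erw|(X^{+})^{\kappa}-(Y^{+})^{\kappa}|<\infty$ combined with the already established step shows that the function $(t,\omega)\mapsto t^{\kappa-1}(\1_{\{X>t\}}-\1_{\{Y>t\}})$ is absolutely integrable on $(0,\infty)\times\Omega$ with respect to the product measure. Fubini's theorem then applies \emph{without} absolute values, and inserting the signed pointwise identity delivers $\kappa^{-1}\,\Erw[(X^{+})^{\kappa}-(Y^{+})^{\kappa}]$, as claimed.

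The only genuinely delicate point is the verification of the pointwise identity, where the case distinction must handle negative values of $X$ or $Y$ correctly (in particular the mixed-sign case, which is the source of the $(\cdot)^{+}$-truncation on the right-hand side); everything else is a routine application of Tonelli's and Fubini's theorems, with the integrability assumption in \eqref{eq:integration of tail differences} being precisely what is needed to upgrade from the former to the latter.
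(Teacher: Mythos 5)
Your proof is correct. The paper itself provides no proof of this lemma, instead citing Goldie \cite[Lemma 9.4]{Goldie:91} and the corrected statement in \cite[p.~53/54]{BurDamMik:16}; your layer-cake identity $\int_0^\infty t^{\kappa-1}(\1_{\{X>t\}}-\1_{\{Y>t\}})\,\diff t = \kappa^{-1}\big((X^+)^\kappa-(Y^+)^\kappa\big)$, the observation that the integrand has a pathwise-constant sign (so the absolute-value version follows at once), and the Tonelli/Fubini upgrade is exactly the standard argument given in that reference, so the approach is essentially the same. One small simplification worth noting: the pointwise identity does not actually require a case analysis on the sign of $X-Y$ or of $X,Y$ individually — it follows directly from $\int_0^\infty t^{\kappa-1}\1_{\{X>t\}}\,\diff t = \int_0^{X^+} t^{\kappa-1}\,\diff t = (X^+)^\kappa/\kappa$ applied separately to $X$ and $Y$ and then subtracting; the sign observation is only needed for the absolute-value companion identity.
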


Using this result, we now establish two integrability lemmata that serve as fundamental auxiliaries for the proof of Theorem~\ref{thm:main_{1}}. 
Observe that $R_{0}=A_{0}R_{-1}+B_{0}$ implies
\begin{equation}\label{eq:estimate 1}
\big|R_{0}^{\pm}-(A_{0}R_{-1})^{\pm}\big|\ \le\ |B_{0}|.
\end{equation}More generally, if $\sigma$ denotes an arbitrary integer-valued random variable, then 
$R_{0}=\itPi_{-\sigma}R_{-\sigma}+\sum_{k=0}^{\sigma-1}\itPi_{-k}B_{-k}$, and by the same reasoning,
\begin{equation}\label{eq:estimate 2}
\Big|R_{0}^{\pm}-(\itPi_{-\sigma}R_{-\sigma})^{\pm}\Big|\ \le\ \Bigg|\sum_{k=0}^{\sigma-1}\itPi_{-k}B_{-k}\Bigg|\ =\ \Bigg|\sum_{k=0}^{\sigma-1}\e^{\dualS_{k}}\dualB_{k}\Bigg|.
\end{equation}
This bound will be used in Lemma~\ref{lem:Isigma finite} below for the stopping time $\sigma$ defined in~\eqref{eq:sign switch}.

\begin{Lemma}\label{lem:Iplus finite}
Under the basic assumptions of Theorem~\ref{thm:main_{1}}, together with conditions \textsf{(B1)} and \textsf{(B2)}, 
the integrals $\|I_{i}^{+}\|_{1}$ and $\|I_{i}^{-}\|_{1}$, defined in 
\eqref{eq:def Iplus} and~\eqref{eq:def Iminus}, are finite for every $i\in\cS$. 
Consequently, the same holds for the integrals $I_{i}^{\pm}$, which moreover satisfy
\begin{gather}\label{eq:Iplusminus expectation}
I_{i}^{\pm}\ =\ \Erw_{i}\!\Big[(R_{0}^{\pm})^{\kappa}-((A_{0}R_{-1})^{\pm})^{\kappa}\Big].
\end{gather}
\end{Lemma}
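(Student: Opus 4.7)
The plan is to apply Goldie's Lemma~\ref{lem:Goldie's lemma} to the pairs $(R_0, A_0 R_{-1})$ and $(-R_0, -A_0 R_{-1})$: by inequality~\eqref{eq:integration of abs tail differences}, finiteness of $\|I_i^{\pm}\|_1$ follows once one controls
\[\Erw_i\big|(R_0^\pm)^\kappa - ((A_0 R_{-1})^\pm)^\kappa\big|,\]
and the identity~\eqref{eq:Iplusminus expectation} is then an immediate consequence of~\eqref{eq:integration of tail differences}. In both cases the starting point is the pointwise bound $|R_0^\pm - (A_0 R_{-1})^\pm| \le |B_0|$ from~\eqref{eq:estimate 1}, valid because $R_0 = A_0 R_{-1} + B_0$.

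I would then split by the size of $\kappa$. For $\kappa \le 1$, the elementary inequality $|x^\kappa - y^\kappa| \le |x - y|^\kappa$ for $x, y \ge 0$, combined with the pointwise bound, gives the upper estimate $\Erw_i|B_0|^\kappa$, which is finite by~\textsf{(B2)}. For $\kappa > 1$, I would invoke $|x^\kappa - y^\kappa| \le \kappa(x \vee y)^{\kappa-1}|x - y|$ for $x, y \ge 0$, together with $(R_0^\pm) \vee ((A_0 R_{-1})^\pm) \le |A_0 R_{-1}| + |B_0|$ and the convexity estimate $(u + v)^{\kappa-1} \le C_\kappa(u^{\kappa-1} + v^{\kappa-1})$, reducing matters to bounding $\Erw_i|B_0|^\kappa$ (finite by~\textsf{(B2)}) and $\Erw_i[|A_0 R_{-1}|^{\kappa-1}|B_0|]$. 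The latter is treated via conditional independence of $(A_0, B_0)$ and $R_{-1}$ given $(\xi_{-1}, \xi_0)$, followed by the stationarity identity $\Erw_i[|R_{-1}|^{\kappa-1} \mid \xi_{-1} = j] = \Erw_j|R_0|^{\kappa-1}$ and H\"older's inequality applied to the factor $\Erw_i[|A_0|^{\kappa-1}|B_0| \mid \xi_{-1} = j]$. What remains to show is the auxiliary moment bound $\Erw_j|R_0|^p < \infty$ for every $j \in \cS$ and $p \in (0, \kappa)$.

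This auxiliary bound is the main obstacle, and I would derive it as follows. Convexity and continuity of $\log \rho$ on $\bbD$ (Lemma~\ref{lem:Ptheta vs Lambda}), combined with $\rho(0) = 1$ (since $P$ is stochastic) and $\rho(\kappa) = 1$ from~\textsf{(B1)}, force $\rho(p) < 1$ for every $p \in (0, \kappa)$. By~\eqref{eq: Ptheta vs Pi n} this yields $\Erw_i|\itPi_k|^p = \sum_{j \in \cS}\sfp_{ij}^k(p)$ decaying geometrically in $k$, uniformly in $i$, and an analogous decay holds for the backward products $\itPi_{-k}$ via the dual representation of Subsection~\ref{subsec:Cramer transform}. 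Plugging this into $R_0 = \sum_{k \ge 0}\itPi_{-k}B_{-k}$ from~\eqref{eq:stationary Rzero}, and using conditional independence of $B_{-k}$ and $\itPi_{-k}$ given $\Xi$, a sub-additivity argument (for $p \le 1$) or Minkowski's inequality (for $p > 1$) together with~\textsf{(B2)} yield summable $p$-th moments of the partial sums, hence $\Erw_j|R_0|^p < \infty$ uniformly in $j$. The remaining technical care is in the geometric summation and the Markov-modulated bookkeeping; no new ideas beyond Lemma~\ref{lem:Ptheta vs Lambda} and~\textsf{(B2)} are needed.
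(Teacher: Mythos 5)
Your proposal is correct and follows essentially the same route as the paper's proof: Goldie's Lemma~\ref{lem:Goldie's lemma} combined with the pointwise bound~\eqref{eq:estimate 1}, a case split at $\kappa=1$, and the auxiliary moment bound $\Erw_j|R_0|^p<\infty$ for $p\in(0,\kappa)$ derived from the geometric decay of $\Erw\,\e^{\theta S_n}$ supplied by Lemma~\ref{lem:Ptheta vs Lambda} together with subadditivity/Minkowski. The only cosmetic difference is the choice of elementary inequality for $\kappa>1$ --- you use the mean-value bound $|x^\kappa-y^\kappa|\le\kappa(x\vee y)^{\kappa-1}|x-y|$ plus an explicit H\"older step, whereas the paper uses the expansion $(x+y)^\theta\le x^\theta+\theta\,2^{\theta-1}(x^{\theta-1}y+xy^{\theta-1})+y^\theta$ --- but this does not change the substance of the argument.
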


\begin{proof}
Define
$$ \mu_{\theta}(A)\ :=\ \max_{i,j\in\cS}\Erw\big[|A_{0}|^{\theta}\,\big|\,\xi_{0}=i,\,\xi_{1}=j\big],
\qquad \theta>0, $$
and analogously for $\mu_{\theta}(B)$. 
Note that these quantities are finite for all $\theta\in[0,\kappa]$. 
For $\theta\in(0,\kappa)$, it follows from Lemma~\ref{lem:Ptheta vs Lambda} that
$$ \frac{1}{n}\log\Erw\big[\e^{\theta S_{n}}\big]\,\le\,\log\!\left(\frac{1+\rho(\theta)}{2}\right)
\,<\,0,\quad\text{for all $n$ sufficiently large.} $$
Hence, for $\theta\in(0,\kappa\wedge1)$, the perpetuity $R_{0}:=\sum_{n\ge 0}\itPi_{-n}B_{-n}=\sum_{n\ge 0}\e^{S_{-n}}B_{-n}$ satisfies, by subadditivity,
$$ \Erw|R_{0}|^{\theta}\,\le\,\mu_{\theta}(B)\sum_{n\ge 0}\Erw|\itPi_{-n}|^{\theta}
\,=\,\mu_{\theta}(B)\sum_{n\ge 0}\Erw\big[\e^{\theta S_{n}}\big]\,<\,\infty. $$
For $\theta\in[1,\kappa)$ with $\kappa>1$, the same conclusion follows from Minkowski's inequality for the $L_{\theta}$-norm of~$R_{0}$.

\vspace{.1cm}
Using the inequality
$$ (x+y)^{\theta}\ \le\ x^{\theta}+\theta\,2^{\theta-1}\big(x^{\theta-1}y+xy^{\theta-1}\big)+y^{\theta},
\qquad x,y\ge0, $$
valid for $\theta>1$, and the subadditivity of $x\mapsto x^{\theta}$ for $\theta\in(0,1)$, 
estimate~\eqref{eq:estimate 1} yields
\begin{align}\label{eq:estimate 1b}
\begin{split}
&(R_{0}^{\pm})^{\theta}-((A_{0}R_{-1})^{\pm})^{\theta}\ \le\ ((A_{0}R_{-1})^{\pm}+|B_{0}|)^{\theta}-((A_{0}R_{-1})^{\pm})^{\theta}\\
&\le\ 
\begin{cases}
 |B_{0}|^{\theta},&\text{if }\theta\in (0,1],\\[1mm]
|B_{0}|^{\theta}+c_{\theta}\big(((A_{0}R_{-1})^{\pm})^{\theta-1}|B_{0}|+(A_{0}R_{-1})^{\pm}|B_{0}|^{\theta-1}\big),&\text{if }\theta >1,
\end{cases}
\end{split}
\end{align}
where $c_{\theta}:=\theta\,2^{\theta-1}$. 
From this bound and the assumptions of the lemma, we deduce the finiteness of 
$\Erw\big|(R_{0}^{\pm})^{\kappa}-((A_{0}R_{-1})^{\pm})^{\kappa}\big|$, since
\begin{align*}
\Erw&\big|(R_{0}^{\pm})^{\kappa}-((A_{0}R_{-1})^{\pm})^{\kappa}\big|\\
&\le\ 
\begin{cases}
\Erw\big[|B_{0}|^{\kappa}\big],&\text{if }\kappa\in (0,1],\\[1mm]
\Erw|B_{0}|^{\kappa}
+c_{\kappa}\Big(\mu_{\kappa-1}(A)\mu_{1}(B)\Erw|R_{0}|^{\kappa-1}
+\mu_{1}(A)\mu_{\kappa-1}(B)\Erw|R_{0}|\Big),&\text{if }\kappa>1.
\end{cases}
\end{align*}
Finally, applying \eqref{eq:integration of abs tail differences} and \eqref{eq:integration of tail differences} from Lemma~\ref{lem:Goldie's lemma} yields $\|I_{i}^{\pm}\|_{1}<\infty$ and hence identity~\eqref{eq:Iplusminus expectation}.
\end{proof}

\begin{Lemma}\label{lem:Isigma finite}
Under the basic assumptions of Theorem~\ref{thm:main_{1}} together with $\Prob[A_{1}<0]>0$ and conditions \textsf{(B1)}, \textsf{(B2)}, 
let $\sigma$ denote the stopping time defined by~\eqref{eq:sign switch}. Then
$$
\Erw\Bigg|\sum_{k=0}^{\sigma-1}\dualA_{k}\dualB_{k}\Bigg|^{\kappa}
\ =\ 
\Erw\Bigg|\sum_{k=0}^{\sigma-1}\itPi_{-k}B_{-k}\Bigg|^{\kappa}
\ <\ \infty.
$$
As a consequence, the integrals $I_{\sigma,i}^{+}$ and $I_{\sigma,i}^{-}$, defined in 
\eqref{eq:def Isigmaplus} and~\eqref{eq:def Isigmaminus}, respectively, are both finite and satisfy
\begin{gather}\label{eq:Isigmaplusminus expectation}
I_{\sigma,i}^{\pm}\ =\ 
\Erw_{i}\big[(R_{0}^{\pm})^{\kappa}\big]
\,-\,\Erw_{i}\big[((\itPi_{-\sigma}R_{-\sigma})^{\pm})^{\kappa}\big]
\end{gather}
for every $i\in\cS$.
\end{Lemma}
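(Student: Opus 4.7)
The plan is to parallel the argument of Lemma~\ref{lem:Iplus finite}, adapting it to the presence of the random stopping time $\sigma$, and then deduce the identity \eqref{eq:Isigmaplusminus expectation} via Goldie's Lemma~\ref{lem:Goldie's lemma}. The proof splits naturally into two steps: first, a bound on the $\kappa$-th moment of $Y_\sigma := \sum_{k=0}^{\sigma-1}\itPi_{-k}B_{-k}$; second, an integrability bound on $(R_0^\pm)^\kappa - ((\itPi_{-\sigma}R_{-\sigma})^\pm)^\kappa$, obtained via a telescoping expansion over the cycle $[0,\sigma)$.

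For the first step, I start from $|Y_\sigma|\le\sum_{k=0}^{\sigma-1}e^{\dualS_k}|\dualB_k|$. In the case $\kappa\le 1$ I would apply subadditivity of $x\mapsto x^\kappa$ to obtain $|Y_\sigma|^\kappa\le\sum_{k\ge 0}e^{\kappa\dualS_k}|\dualB_k|^\kappa\1_{\{\sigma>k\}}$, then control the conditional moments $\Erw[|\dualB_k|^\kappa\mid\dualcF_k\vee\sigma(\dualxi_{k+1})]$ uniformly by $\mu_\kappa(B)<\infty$ (from \textsf{(B2)}), reducing matters to summing $\Erw_i[e^{\kappa\dualS_k}\1_{\{\sigma>k\}}]$ in $k$. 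A change of measure to $\dProb_i^{(\kappa)}$ via \eqref{eq:def dual Ptheta_i} (using $\rho(\kappa)=1$) converts this into a constant multiple of $\dErw_i^{(\kappa)}\sigma$, which is finite by \eqref{eq:Ehat sigma finite}. For $\kappa>1$, Minkowski's inequality gives $\|Y_\sigma\|_\kappa\le\sum_{k\ge 0}(C\dProb_i^{(\kappa)}[\sigma>k])^{1/\kappa}$, and summability then follows from the geometric decay of $\dProb_i^{(\kappa)}[\sigma>k]$, which holds because $\sigma$ is the hitting time of $\cS\times\{+1\}$ for an irreducible positive recurrent chain on a finite state space (under \textsf{(B+)}).

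For the second step, the key tool is the telescoping identity $(R_0^\pm)^\kappa-((\itPi_{-\sigma}R_{-\sigma})^\pm)^\kappa=\sum_{k=0}^{\sigma-1}\big[((\itPi_{-k}R_{-k})^\pm)^\kappa-((\itPi_{-k-1}R_{-k-1})^\pm)^\kappa\big]$, combined with the elementary bound adapted from \eqref{eq:estimate 1b}: for $U=\itPi_{-k-1}R_{-k-1}$ and $V=\itPi_{-k}B_{-k}$, one has $|((U+V)^\pm)^\kappa-(U^\pm)^\kappa|\le|V|^\kappa$ when $\kappa\le 1$, and $\le c_\kappa(|U|^{\kappa-1}|V|+|V|^\kappa)$ when $\kappa>1$. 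The $|V|^\kappa$ contribution is controlled by the first step. For the cross term $|U|^{\kappa-1}|V|=e^{\kappa\dualS_k}|A_{-k}|^{\kappa-1}|B_{-k}||R_{-k-1}|^{\kappa-1}\1_{\{\sigma>k\}}$, the decisive observation is a Markov decoupling: conditional on $\xi_{-k-1}$, the variable $R_{-k-1}$ (a function of indices $n\le -k-1$) is independent of the ``forward'' factor $e^{\kappa\dualS_k}|A_{-k}|^{\kappa-1}|B_{-k}|\1_{\{\sigma>k\}}$. Time-stationarity then yields $\Erw[|R_{-k-1}|^{\kappa-1}\mid\xi_{-k-1}=j]\le(\min_i\pi_i)^{-1}\Erw_\pi|R_0|^{\kappa-1}$, finite because $\kappa-1<\kappa$ (by the moment bound established in the proof of Lemma~\ref{lem:Iplus finite}). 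Combined with the summability from the first step and the uniform bound $\Erw[|A_{-k}|^{\kappa-1}|B_{-k}|\mid \xi_{-k-1},\xi_{-k}]<\infty$ provided by H\"older from \textsf{(B1)} and \textsf{(B2)}, this gives $\Erw_i|(R_0^\pm)^\kappa-((\itPi_{-\sigma}R_{-\sigma})^\pm)^\kappa|<\infty$, whence Goldie's Lemma~\ref{lem:Goldie's lemma} supplies both $\|I_{\sigma,i}^\pm\|_1<\infty$ and the identity \eqref{eq:Isigmaplusminus expectation}.

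The main obstacle I anticipate is the cross-term bound in the $\kappa>1$ case: a direct H\"older attack on $\Erw[|\itPi_{-\sigma}R_{-\sigma}|^{\kappa-1}|Y_\sigma|]$ would require moments of $R_0$ of order $\kappa$, which are precisely not at our disposal. The resolution via Markov conditioning on $\xi_{-k-1}$ cleanly separates the perpetuity factor $R_{-k-1}$ (depending on the deep past) from the random walk factor $e^{\kappa\dualS_k}\1_{\{\sigma>k\}}$ (depending on times $0\ge n\ge -k+1$), reducing matters to the bounded conditional moment $\Erw[|R_{-1}|^{\kappa-1}\mid\xi_{-1}=\cdot]$ and to the summability already established in the first step.
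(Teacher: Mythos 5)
Your proof is correct and takes a genuinely different route from the paper's. For the first moment bound $\Erw|Y_\sigma|^\kappa<\infty$, the paper runs an induction in $n$ with $\kappa\in(2^{n-1},2^{n}]$, splitting $Y_\sigma$ into a conditional-mean part and a zero-mean martingale controlled by Burkholder's inequality, so that only $\dErw^{(\kappa)}\sigma<\infty$ enters. You replace this by a single Minkowski step after the change of measure to $\dProb^{(\kappa)}$, at the price of needing geometric decay of $\dProb^{(\kappa)}[\sigma>k]$ rather than mere summability; this is indeed available under $\textsf{(B+)}$, since the sign chain on $\cS\times\{\pm1\}$ has a uniformly positive one-step flip probability, so $\dProb^{(\kappa)}[\sigma>k]$ decays geometrically. (A small index slip: to factor out $|\dualB_{k}|^{\kappa}$ you should condition on $\sigma(\dualxi_{0},\ldots,\dualxi_{k+1})$, not on $\dualcF_{k}\vee\sigma(\dualxi_{k+1})$, which already contains $\dualB_{k}$.) For the second assertion, the paper simply invokes \eqref{eq:estimate 2} and Lemma~\ref{lem:Goldie's lemma}, which for $\kappa\le1$ immediately reduces everything to $\Erw|Y_\sigma|^\kappa<\infty$. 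You are right that for $\kappa>1$ the cross terms of the form $\Erw\big[|\itPi_{-\sigma}R_{-\sigma}|^{\kappa-1}|Y_\sigma|\big]$ need care, and that a bare H\"older bound would demand the $\kappa$-th moment of $R_0$, which is exactly what is not available. Your telescoping over the cycle $[0,\sigma)$, combined with the Markov decoupling through $\xi_{-k-1}$ and the bound $\sup_{j}\Erw_{j}|R_0|^{\kappa-1}<\infty$ established inside Lemma~\ref{lem:Iplus finite}'s proof, handles this correctly, with the cross-term sum again controlled by $\dErw^{(\kappa)}\sigma<\infty$. On balance, your route is more explicit about the $\kappa>1$ cross-terms and avoids Burkholder and the dyadic induction, relying instead on the geometric tail of $\sigma$, which in the finite-state setting costs nothing extra.
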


\begin{proof}
In view of \eqref{eq:estimate 2} and Lemma~\ref{lem:Goldie's lemma}, it suffices to establish the first assertion.  
This will be done by induction over $n$, where the induction parameter is chosen such that $\kappa\in(2^{\,n-1},2^{\,n}]$.  
Throughout the proof, $\gamma$ denotes a generic positive constant that may change from line to line.  
Recall from~\eqref{eq:Ehat sigma finite} that $\dErw^{(\kappa)}\sigma<\infty$.

\vspace{.1cm}
\textsc{Base case $n=0$.}  
If $\kappa\in(\frac{1}{2},1]$, then subadditivity together with
$$ \sup_{i\in\cS}\Erw\big[|\dualB_{k}|^{\kappa}\big|\dualxi_{k-1}=i\big]\ \le\ \gamma\,\Erw\big[|\dualB_{1}|^{\kappa}\big]\ =\ \gamma\,\Erw\big[|B_{1}|^{\kappa}\big] $$
yields
\begin{align*}
\Erw\Bigg|\sum_{k=0}^{\sigma-1}\dualA_{k}\dualB_{k}\Bigg|^{\kappa}\ &=\ \Erw\Bigg|\sum_{k\ge 0}\e^{\dualS_{k}}\dualB_{k}\1_{\{\sigma>k\}}\Bigg|^{\kappa}\ \le\ \sum_{k\ge 0}\Erw \big[\e^{\kappa\dualS_{k}}|\dualB_{k}|^{\kappa}\1_{\{\sigma>k\}}\big]\\
&\le\ \sum_{k\ge 0}\Erw \big[\e^{\dualS_{k}}|\dualB_{k}|^{\kappa}\1_{\{\sigma>k\}}\big]\ =\ \sum_{k\ge 0}\Erw\Big[\e^{\dualS_{k}}\,\Erw\big[|\dualB_{k}|^{\kappa}|\dualxi_{k-1}\big]\1_{\{\sigma>k\}}\Big]\\
&\le\ \gamma\,\Erw|B_{1}|^{\kappa}\sum_{k\ge 0}\Erw \Big[\e^{\dualS_{k}}\1_{\{\sigma>k\}}\Big]\ \le\ \gamma\,\Erw|B_{1}|^{\kappa}\sum_{k\ge 0}\dProb^{(\kappa)}[\sigma>k]\\
&\le\ \gamma\,\Erw|B_{1}|^{\kappa}\,\dErw^{(\kappa)}\sigma\ <\ \infty.
\end{align*}
Obviously, the argument is valid for any $\kappa\in (0,1]$.

\vspace{.1cm}
\textsc{Inductive step.}  
Let $\kappa\in(2^{\,n},2^{\,n+1}]$ for some $n\ge0$.  
Assume the lemma holds for all $\kappa'\le 2^{\,n}$ and every Markov-modulated sequence $(A_{k},B'_{k})_{k\in\Z}$ satisfying the same assumptions.

Define
$$
\wh\sfm_{k}\ :=\ \Erw\big[\dualB_{k}\,\big|\,\dualxi_{k-1},\dualxi_{k}\big]
\ =\ 
\Erw\big[B_{-k}\,\big|\,\xi_{-k+1},\xi_{-k}\big]
\ =:\ \sfm_{-k}, \qquad k\in\Z.
$$
Then
$$
\sum_{k\ge 0}\e^{\dualS_{k}}\dualB_{k}\1_{\{\sigma>k\}}
\ =\ 
\sum_{k\ge 0}\e^{\dualS_{k}}\wh\sfm_{k}\1_{\{\sigma>k\}}
\,+\,
\sum_{k\ge 0}\e^{\dualS_{k}}(\dualB_{k}-\wh\sfm_{k})\1_{\{\sigma>k\}}.
$$
The first sum has finite expectation since, using again the finiteness of~$\cS$,
$$
\sum_{k\ge 0}\Erw\big[\e^{\dualS_{k}}\wh\sfm_{k}\1_{\{\sigma>k\}}\big]
\ \le\ \gamma\sum_{k\ge 0}\Erw\big[\e^{\dualS_{k}}\1_{\{\sigma>k\}}\big]
\ \le\ \gamma\sum_{k\ge 0}\dProb^{(\kappa)}[\sigma>k]
\ =\ \gamma\,\dErw^{(\kappa)}\sigma\ <\ \infty.
$$
The second sum is the a.s.~limit of the zero-mean martingale
$$
M_{n}\ :=\ \sum_{k=0}^{n-1}\e^{\dualS_{k}}(\dualB_{k}-\wh\sfm_{k})\1_{\{\sigma>k\}},
\qquad n\ge1.
$$
Applying Burkholder's inequality (see, e.g., \cite[Thm.\,2.10]{Hall+Heyde:80}) gives
$$
\Erw\Bigg|\sum_{k\ge 0}\e^{\dualS_{k}}(\dualB_{k}-\wh\sfm_{k})\1_{\{\sigma>k\}}\Bigg|^{\kappa}
\ \le\ \gamma\,\Erw\Bigg(\sum_{k\ge 0}\e^{2\dualS_{k}}(\dualB_{k}-\wh\sfm_{k})^{2}\1_{\{\sigma>k\}}\Bigg)^{\kappa/2}.
$$
By the inductive hypothesis, the expectation on the right-hand side is finite, since the sequence $(A_{k},(B_{k}-\sfm_{k})^{2})_{k\in\Z}$ satisfies the same assumptions with exponent $\kappa/2\le 2^{\,n}$.  
This completes the induction and hence the proof.
\end{proof}

\begin{Lemma}\label{lem:sampling at positive sign}
Under the assumptions of Theorem \ref{thm:main_{1}} including \textsf{(B+)}, the Cram\'er transform of $(\dualxi_{n}^{*},\dualzeta_{n}^{*})_{n\ge 0}$, viz.
$$
\dualsfP^{*}(\theta)\,:=\,\Big(\Erw_{i}\big[\e^{\theta\dualS_{1}^{*}}\1_{\{\dualxi_{1}^{*}=j\}}\big]\Big)_{i,j\in\cS}
\,=\,\Big(\Erw_{i}\big[\e^{\theta S_{-\sigma}}\1_{\{\xi_{-\sigma}=j\}}\big]\Big)_{i,j\in\cS},
$$
is positive for each $\theta\in[0,\kappa]$. For $\theta=\kappa$, it has dominant eigenvalue $\rho^{*}(\kappa)=1$ with associated left and right eigenvectors $\dualu^{\,*}(\kappa)=\pi(\kappa)^{\top}\dualD(\kappa)^{-1}$ and $\dualv^{\,*}(\kappa)=\dualv(\kappa)$, respectively, satisfying $\dualu^{\,*}(\kappa)^{\top}\dualv^{\,*}(\kappa)=1$. Furthermore, recalling $\dErw_{\pi(\kappa)}^{(\kappa)}\sigma=2$,
\begin{gather}
\dErw_{i}^{(\kappa)}(\dualS_{1}^{*})^{+}\,=\,\frac{1}{\dualv_{i}(\kappa)}\,\Erw_{i}\Big[\e^{\kappa S_{-\sigma}}S_{-\sigma}^{+}\,\dualv_{\xi_{-\sigma}}(\kappa)\Big]\,<\,\infty,\label{eq:LlogL for Ssigma}\\
\dErw_{\pi(\kappa)}^{(\kappa)}\dualS_{1}^{*}\,=\,2\rho'(\kappa)\ \in\ \IRg,\label{eq:drift embedded MRW}
\shortintertext{and}
\dProb_{\pi(\kappa)}^{(\kappa)}\big[S_{-\sigma}-a_{\xi_{-\sigma}}+a_{\xi_{0}}\in d\Z\big]\,<\,1,
\label{eq:lattice-type S_sigma}
\end{gather} 
for any $d>0$ and $\{a_{i}:i\in\cS\}\subset[0,d)$, so $(\dualxi_{n}^{*},\dualS_{n}^{*})_{n\ge 0}$ is nonlattice under $\dProb^{(\kappa)}$. 
\end{Lemma}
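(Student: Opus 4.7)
The plan is to establish the five claims sequentially using three tools: assumption \textsf{(B+)}, which supplies positivity of the one-step kernels; the change-of-measure identity \eqref{eq:def dual Ptheta_i} applied to the bounded stopping times $\sigma\wedge n$ followed by passage to the limit, justified because $\dProb_{i}^{(\kappa)}[\sigma<\infty]=1$ by Lemma \ref{lem:signed chain stationary law}; and the occupation measure formula \eqref{eq:occupation measure formula} already invoked further below in the main proof.

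Positivity of $\dualsfP^{*}(\theta)$ is immediate: for any $i,j\in\cS$ and $\theta\in[0,\kappa]$, the $(i,j)$-entry is bounded below by $\Erw_{i}[\e^{\theta S_{-1}}\1_{\{\xi_{-1}=j,\,A_{0}\ge 0\}}]$, which is strictly positive by \textsf{(B+)}. For the eigenvalue and right eigenvector, I apply \eqref{eq:def dual Ptheta_i} with $\rho(\kappa)=1$ to the bounded stopping time $\sigma\wedge n$ and $f=\1_{\{\dualxi_{\sigma\wedge n}=j\}}$, obtaining
$$\dualv_{i}(\kappa)\,\dProb_{i}^{(\kappa)}\!\big[\dualxi_{\sigma\wedge n}=j\big]\ =\ \Erw_{i}\!\Big[\e^{\kappa S_{-(\sigma\wedge n)}}\dualv_{\xi_{-(\sigma\wedge n)}}(\kappa)\,\1_{\{\xi_{-(\sigma\wedge n)}=j\}}\Big].$$
Splitting the right-hand side according to $\{\sigma\le n\}$ and $\{\sigma>n\}$ and letting $n\to\infty$, the $\{\sigma>n\}$-contribution equals $\dualv_{i}(\kappa)\,\dProb_{i}^{(\kappa)}[\dualxi_{n}=j,\,\sigma>n]\to 0$ because $\dProb_{i}^{(\kappa)}[\sigma=\infty]=0$, while the $\{\sigma\le n\}$-contribution converges monotonically. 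Summing over $j\in\cS$ yields $\Erw_{i}[\e^{\kappa S_{-\sigma}}\dualv_{\xi_{-\sigma}}(\kappa)]=\dualv_{i}(\kappa)$, i.e., $\dualsfP^{*}(\kappa)\dualv(\kappa)=\dualv(\kappa)$. Combined with positivity, Perron--Frobenius forces $\rho^{*}(\kappa)=1$ and $\dualv^{\,*}(\kappa)=\dualv(\kappa)$. For the left eigenvector I compute $(\pi(\kappa)^{\top}\dualD(\kappa)^{-1})_{i}=\pi_{i}(\kappa)v_{i}(\kappa)$, and the identity $\dualu^{\,*}(\kappa)^{\top}\dualsfP^{*}(\kappa)=\dualu^{\,*}(\kappa)^{\top}$ then reduces to $\pi(\kappa)$ being stationary for the stochastic matrix $\dualD(\kappa)^{-1}\dualsfP^{*}(\kappa)\dualD(\kappa)$, which is Lemma \ref{lem:signed chain stationary law}. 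The normalization $\dualu^{\,*}(\kappa)^{\top}\dualv^{\,*}(\kappa)=\sum_{i}\pi_{i}(\kappa)=1$ is immediate.

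For \eqref{eq:LlogL for Ssigma} and \eqref{eq:drift embedded MRW}, I use $\dualS_{1}^{*}=S_{-\sigma}=\sum_{k=1}^{\sigma}\dualzeta_{k}$ to bound $|\dualS_{1}^{*}|\le\sum_{k=1}^{\sigma}|\dualzeta_{k}|$, pass from $\dErw_{i}^{(\kappa)}$ to $\dErw_{\pi(\kappa)}^{(\kappa)}$ using finiteness of $\cS$, and apply the occupation measure formula to obtain
$$\dErw_{\pi(\kappa)}^{(\kappa)}\sum_{k=1}^{\sigma}|\dualzeta_{k}|\ =\ \dErw_{\pi(\kappa)}^{(\kappa)}\sigma\cdot\dErw_{\pi(\kappa)}^{(\kappa)}|\dualzeta_{1}|\ =\ 2\,\dErw^{(\kappa)}|\zeta_{1}|,$$
which is finite by \textsf{(B2)} and Lemma \ref{lem:Ptheta vs Lambda}. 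Dropping absolute values and applying the dual analogue of \eqref{eq:stationary drift} then yields $\dErw_{\pi(\kappa)}^{(\kappa)}\dualS_{1}^{*}=2\dErw^{(\kappa)}\zeta_{1}=2\rho'(\kappa)$, which is strictly positive since convexity of $\rho$ with $\rho(0)=\rho(\kappa)=1$ and $\rho'(0)<0$ (Lemma \ref{lem:Pdelta^kappa case 1}(e)) forces $\rho'(\kappa)>0$.

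For the nonlattice condition \eqref{eq:lattice-type S_sigma}, I exploit the event $\{\sigma=1\}=\{A_{0}\ge 0\}$, on which $S_{-\sigma}=\log|A_{0}|$ and $\xi_{-\sigma}=\xi_{-1}$. By \textsf{(B+)} this event has strictly positive $\dProb^{(\kappa)}$-probability, and if \eqref{eq:lattice-type S_sigma} were violated (probability equal to $1$), then conditioning on $\{\sigma=1\}$ together with time-reversal and mutual absolute continuity of $\Prob$ and $\dProb^{(\kappa)}$ on $\dualcF_{1}$ would force $\Prob[\log|A_{1}|-a_{\xi_{1}}+a_{\xi_{0}}\in d\Z\mid A_{1}\ne 0]=1$, contradicting \textsf{(B3)}. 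The main obstacle I anticipate is the optional-stopping step for the unbounded stopping time $\sigma$ in the right-eigenvector identity: uniform integrability of $(\e^{\kappa S_{-(\sigma\wedge n)}}\dualv_{\xi_{-(\sigma\wedge n)}}(\kappa))_{n\ge 1}$ is not transparent a priori, and the resolution is to bypass it by working with the bounded $\sigma\wedge n$, translating into a probability statement under $\dProb_{i}^{(\kappa)}$ via \eqref{eq:def dual Ptheta_i}, and then passing to the limit in the split described, rather than pursuing a direct UI argument.
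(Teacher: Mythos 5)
Parts one through four of your argument --- positivity, identification of $\rho^{*}(\kappa)=1$ with right eigenvector $\dualv(\kappa)$ and left eigenvector $\pi(\kappa)^{\top}\dualD(\kappa)^{-1}$, the finiteness claim \eqref{eq:LlogL for Ssigma} via the occupation measure formula, and the drift identity \eqref{eq:drift embedded MRW} --- are correct and essentially follow the paper's route. Your concern about uniform integrability in the right-eigenvector step is misplaced: the paper simply expands $1=\dProb_{i}^{(\kappa)}[\sigma<\infty]=\sum_{n\ge 1}\dProb_{i}^{(\kappa)}[\sigma=n]$ and applies the change-of-measure identity termwise, which needs only monotone convergence since all terms are nonnegative; your workaround through $\sigma\wedge n$ lands in the same place.

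The nonlattice claim \eqref{eq:lattice-type S_sigma} is where your argument has a genuine gap. Restricting the hypothesized lattice relation to the event $\{\sigma=1\}=\{A_{0}>0\}$ and passing through time-reversal and absolute continuity of the measures only yields $\Prob_{\pi}[\log|A_{1}|-a_{\xi_{1}}+a_{\xi_{0}}\in d\Z\mid A_{1}>0]=1$. This does \emph{not} contradict \textsf{(B3)}, which is conditioned on $\{A_{1}\ne 0\}=\{A_{1}>0\}\cup\{A_{1}<0\}$; your argument says nothing about the conditional law on $\{A_{1}<0\}$, so the inference ``would force $\Prob[\cdots\mid A_{1}\ne 0]=1$'' does not follow. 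On $\{\sigma>1\}$ the product $\itPi_{-\sigma}$ contains at least two factors of negative sign, so lattice information about a single negative $A$ cannot simply be read off. The paper closes this gap via characteristic functions and the event $\{\sigma=2\}$: with $\vph_{ij}^{\pm}(2\pi):=\dErw_{i}^{(\kappa)}\big[\e^{2\pi\bfi\log|A_{0}|}\mid\xi_{-1}=j,\ A_{0}\gtrless 0\big]$, the lattice hypothesis on $\{\sigma=2,\,\xi_{-1}=\xi_{-2}=j\}$ (where $A_{0}<0$ and $A_{-1}<0$) gives $\vph_{ij}^{-}(2\pi)\vph_{jj}^{-}(2\pi)=\e^{2\pi\bfi(a_{j}-a_{i})}$ and, taking $i=j$, $\vph_{jj}^{-}(2\pi)^{2}=1$, so $\vph_{ij}^{-}(2\pi)=\pm\e^{2\pi\bfi(a_{j}-a_{i})}$. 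Combined with $\vph_{ij}^{+}(2\pi)=\e^{2\pi\bfi(a_{j}-a_{i})}$ from $\{\sigma=1\}$, one obtains $\dProb_{i}^{(\kappa)}\big[S_{-1}-a_{j}+a_{i}\in\Z/2\mid\xi_{-1}=j\big]=1$ for all $i,j$, and \emph{this} does contradict \textsf{(B3)} with span $d/2$. Without such an argument controlling the conditional law of $\log|A_{1}|$ on $\{A_{1}<0\}$, your proof of the nonlattice assertion is incomplete.
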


\begin{proof}
The positivity of $\dualsfP^{*}(\theta)$ for $\theta\in[0,\kappa]$ follows from \textsf{(B+)}, since
$$
\Erw_{i}\big[\e^{\theta S_{-\sigma}}\1_{\{\xi_{-\sigma}=j\}}\big]
\,\ge\,\Erw_{i}\big[\e^{\theta S_{-1}}\1_{\{\sigma=1,\ \xi_{-1}=j\}}\big]
\,=\,\dualsfp_{ij}^{+}(\theta)\,>\,0
$$
for all $i,j\in\cS$. For finiteness, use \eqref{eq:Ehat sigma finite} to obtain
$$
1\,=\,\dProb_{i}^{(\kappa)}[\sigma<\infty]
\,=\,\sum_{n\ge1}\frac{1}{\dualv_{i}(\kappa)}\Erw_{i}\big[\e^{\kappa S_{-n}}\dualv_{\xi_{-n}}(\kappa)\1_{\{\sigma=n\}}\big]
\,=\,\frac{1}{\dualv_{i}(\kappa)}\Erw_{i}\big[\e^{\kappa S_{-\sigma}}\dualv_{\xi_{-\sigma}}(\kappa)\big],
$$
whence $\dualsfP^{*}(\kappa)$ has dominant eigenvalue $\rho^{*}(\kappa)=1$ with right eigenvector $\dualv(\kappa)$. Consequently,
$$
\dualP^{*}(\kappa)\,=\,\dualD(\kappa)^{-1}\,\sfP^{*}(\kappa)\,\dualD(\kappa),
$$
so the normalized left eigenvector of $\sfP^{*}(\kappa)$ is $\dualu^{\,*}(\kappa)=\pi(\kappa)^{\top}\dualD(\kappa)^{-1}$, and $\dualu^{\,*}(\kappa)^{\top}\dualv^{\,*}(\kappa)=1$ (see also the end of Subsection~\ref{subsec:Cramer transform}).

\vspace{.1cm}
Next, by \eqref{eq:Ehat sigma finite} and the occupation measure formula \eqref{eq:occupation measure formula},
$$
\dErw_{\pi(\kappa)}^{(\kappa)}S_{-\sigma}^{+}\,\le\,\dErw_{\pi(\kappa)}^{(\kappa)}\!\left[\sum_{k=1}^{\sigma}\zeta_{-k}^{+}\right]\,=\,\dErw_{\pi(\kappa)}^{(\kappa)}\zeta_{-1}^{+}\ \dErw_{\pi(\kappa)}^{(\kappa)}\sigma
\,<\,\infty,
$$
and therefore, for any $i,j\in\cS$,
$$
\Erw_{i}\big[\e^{\kappa S_{-\sigma}}S_{-\sigma}^{+}\1_{\{\xi_{-\sigma}=j\}}\big]
\,=\,\frac{\dualv_{i}(\kappa)}{\dualv_{j}(\kappa)}\,\dErw^{(\kappa)}S_{-\sigma}^{+}
\,<\,\infty.
$$
This proves \eqref{eq:LlogL for Ssigma}. Then
$$
\dErw_{\pi(\kappa)}^{(\kappa)}\dualS_{1}^{*}\,=\,\dErw_{\pi(\kappa)}^{(\kappa)}\!\Bigg[\sum_{n=1}^{\sigma}\dualzeta_{n}\Bigg]\,=\,\dErw_{\pi(\kappa)}^{(\kappa)}\dualS_{1}\ \dErw_{\pi(\kappa)}^{(\kappa)}\sigma
\,=\,\rho'(\kappa)\,\dErw_{\pi(\kappa)}^{(\kappa)}\sigma\,=\,2\rho'(\kappa),
$$
which yields \eqref{eq:drift embedded MRW}.

\vspace{.1cm}
For the lattice-type statement \eqref{eq:lattice-type S_sigma}, assume to the contrary that for some $d>0$ (w.l.o.g.\ $d=1$) and $a_i\in[0,1)$,
$$
\dProb^{(\kappa)}\big[S_{-\sigma}-a_{\xi_{-\sigma}}+a_{\xi_{0}}\in\Z\big]\,=\,1.
$$
Put $\bfi=\sqrt{-1}$ and define, for $i,j\in\cS$ and $t\in\R$,
\begin{gather*}
\vph_{ij}^{\pm}(t)\,:=\,\dErw_{i}^{(\kappa)}\big[\e^{\bfi t\log|A_{0}|}\,\big|\,\xi_{-1}=j,\ A_{0}\gtrless0\big]\quad\text{and}\quad
\vph_{ij}(t)\,:=\,\dErw_{i}^{(\kappa)}\big[\e^{\bfi t\log|A_{0}|}\,\big|\,\xi_{-1}=j\big].
\end{gather*}
Recall from \eqref{eq:def dual pplusminus} that $\dualp_{ij}^{\,\pm}(\kappa)=\dProb_{i}^{(\kappa)}\big[\xi_{-1}=j,\ A_{0}\gtrless 0\hspace{.8pt}\big]$.

\vspace{.1cm}
Our assumption implies, for all $i,j\in\cS$,
$$
\vph_{ij}^{+}(2\pi)
\,=\,\Erw_{i}\big[\e^{2\pi\bfi\zeta_{-1}}\,\big|\,\xi_{-1}=j,\ A_{0}>0\big]
\,=\,\Erw_{i}\big[\e^{2\pi\bfi S_{-\sigma}}\,\big|\,\xi_{-1}=j,\ A_{0}>0\big]
\,=\,\e^{2\pi\bfi(a_{j}-a_{i})}.
$$
By \textsf{(B+)}, all $\dualp_{ij}(\kappa)$ and $\dualp_{ij}^{\,\pm}(\kappa)$ are positive.  
For $\vph_{ij}^{-}(2\pi)$ we use the events $\{\xi_{-1}=j,\ \xi_{-2}=j,\ \sigma=2\}$ with $A_{0}<0$, $A_{-1}<0$ to obtain
$$
\e^{2\pi\bfi(a_{j}-a_{i})}
\,=\,\dErw_{i}^{(\kappa)}\big[\e^{2\pi\bfi S_{-\sigma}}\,\big|\,\xi_{-1}=j,\ \xi_{-2}=j,\ A_{0}<0,\ A_{-1}<0\big]
\,=\,\vph_{ij}^{-}(2\pi)\,\vph_{jj}^{-}(2\pi),
$$
and similarly
$$ 1\,=\,\dErw_{j}^{(\kappa)}\big[\e^{2\pi\bfi S_{-\sigma}}\,\big|\,\xi_{-1}=j,\ \xi_{-2}=j,\ A_{0}<0,\ A_{-1}<0\big]\,=\,\vph_{jj}^{-}(2\pi)^{2}. $$
Hence, for all $i,j$,
$$ \vph_{ij}^{-}(2\pi)\,=\,\e^{2\pi\bfi(a_{j}-a_{i})}\quad\text{or}\quad\vph_{ij}^{-}(2\pi)\,=\,-\,\e^{2\pi\bfi(a_{j}-a_{i})}. $$
Combining this with the formula for $\vph_{ij}^{+}(2\pi)$ yields
$$ \dProb_{i}^{(\kappa)}\big[S_{-1}-a_{j}+a_{i}\in \Z/2\,\big|\,\xi_{-1}=j\big]\,=\,1
\qquad\text{for all }i,j\in\cS, $$
which contradicts \textsf{(B3)}. Therefore \eqref{eq:lattice-type S_sigma} holds, and the chain $(\dualxi_{n}^{*},\dualS_{n}^{*})$ is nonlattice under $\dProb^{(\kappa)}$.
\end{proof}

\section{Proof of Theorem \ref{thm:main_2}}\label{sec:proof thm main_2}

Consider the jump epochs of $J$,
$$
T_{0}\,:=\,0,
\quad\text{and}\quad
T_{n}\,:=\,\inf\{t>T_{n-1}: J_{t-}\neq J_{t}\}\quad\text{for }n\ge 1.
$$
From \eqref{MMGOUexplicit} it follows that
$$
V_{T_{n}}
\ =\ \e^{-(\zeta_{T_{n}}-\zeta_{T_{n-1}})}\,V_{T_{n-1}}
 \,+\,\e^{-\zeta_{T_{n}}}\!\int_{(T_{n-1},T_{n}]}\e^{\zeta_{s-}}\,\diff\eta_s,
 \qquad n\in\N.
$$
Hence, $(\xi_{n},R_{n}):=(J_{T_{n}},V_{T_{n}})$ is an \MMLIFS\ of the form 
$R_{n}=A_{n}R_{n-1}+B_{n}$,
with coefficients
\begin{equation}\label{eq-MMRREcoeff}
(A_{n},B_{n})
\ =\
\bigg(\e^{-(\zeta_{T_{n}}-\zeta_{T_{n-1}})}\,,\,
\int_{(T_{n-1},T_{n}]}\e^{-(\zeta_{T_{n}}-\zeta_{s-})}\,\diff\eta_s\bigg),
\qquad n\ge 1.
\end{equation}

By replacing $(J,(\zeta,\eta))$ with its dual counterpart $(J^{*},(\zeta^{*},\eta^{*}))$, we can extend the above \MMLIFS\ by setting  $(\xi_{-n+1},A_{-n+1},B_{-n})_{n\ge 1}:=(\xi_{n}^{*},A_{n}^{*},B_{n}^{*})_{n\ge 1}$. This yields a doubly infinite stationary sequence $(\xi_{n},A_{n},B_{n})_{n\in\ZZ}$ with $\Prob[A_{1}\ge 0]=1$. We may thus apply Theorem \ref{thm:main_{1}} once conditions \textsf{(B1)} -- \textsf{(B4)} are checked.

\vspace{.2cm}
By definition \eqref{eq_upsilon},
$$
\Erw_{i}\Big[|A_{1}|^{\kappa}\,\1_{\{\xi_{1}=j\}}\Big]
\ =\ \Erw_{i}\!\left[\e^{-\kappa\zeta_{T_{1}}}\,\1_{\{J_{T_{1}}=j\}}\right]
 \,=\,\Upsilon_{\zeta}^{ij}(\kappa),
$$
and therefore Assumption \textsf{(B1)} of Theorem~\ref{thm:main_{1}} is equivalent to Assumption \textsf{(A1)}.

\vspace{.2cm}
Moreover,
$$
|A_{1}|^{\kappa}\log|A_{1}|
\ =\ -\,\zeta_{T_{1}}\,\e^{-\kappa\zeta_{T_{1}}}
\ =\ \frac{\partial}{\partial\theta}\,\e^{-\theta\zeta_{T_{1}}}\Big|_{\theta=\kappa},
$$
and the first part of Assumption \textsf{(B2)} of Theorem~\ref{thm:main_{1}} holds in the present setting if and only if
$$
\frac{\partial}{\partial\theta}\Erw_{\pi}\big[\e^{-\theta\zeta_{T_{1}}}\big]\Big|_{\theta=\kappa}
\ =\ \sum_{i\in\cS}\sum_{j\in\cS}\pi_{i}\,
   \frac{\partial}{\partial\theta}\Erw_{i}\!\big[\e^{-\theta\zeta_{T_{1}}}\,\1_{\{J_{T_{1}}=j\}}\big]\Big|_{\theta=\kappa}
 \,<\,\infty,
$$
which is precisely condition~\textsf{(A2)}.

\vspace{.2cm}
To verify the second moment condition in \textsf{(B2)}, we apply \cite[Thm.~3.5]{BDTS}, noting that $T_{1}$ is exponentially distributed and therefore possesses moments of all orders. 
Assume first that $\kappa\ge 1$, in which case $\Erw_{\pi}|\eta_{1}|^{\kappa}<\infty$ follows from \textsf{(A3)}. 
Moreover, for any $\varepsilon>0$,
\begin{align*}
&\Erw_{\pi}\!\left[\sup_{s\le T_{1}}\big|\e^{-(\zeta_{T_{1}}-\zeta_{s-})}\big|^{\kappa+\varepsilon}\right]<\infty\\
&\qquad\Longleftrightarrow\quad
\Erw_{j}\!\left[\sup_{s\le T_{1}}\big|\e^{-(\zeta_{T_{1}}-\zeta_{s-})}\big|^{\kappa+\varepsilon}\right]<\infty\quad\text{for all }j\in\cS\\
&\qquad\Longleftrightarrow\quad
\Erw_{j}\!\left[\e^{-(\kappa+\varepsilon)\Delta\zeta_{T_{1}}}\,
\sup_{s\le T_{1}}\big|\e^{-(\zeta^{(j)}_{T_{1}-}-\zeta^{(j)}_{s-})}\big|^{\kappa+\varepsilon}\right]<\infty
\quad\text{for all }j\in\cS\\
&\qquad\Longleftrightarrow\quad
\Erw_{j}\!\left[\e^{-(\kappa+\varepsilon)\Delta\zeta_{T_{1}}}\right]\,
\Erw_{j}\!\left[\sup_{s\le T_{1}}\e^{-(\kappa+\varepsilon)\zeta^{(j)}_{s}}\right]<\infty
\quad\text{for all }j\in\cS.
\end{align*}
Finiteness of the two factors in the last line for each $j$ and for some suitable $\varepsilon$ is ensured by \textsf{(A4)}. 
Hence, by \cite[Thm.~3.5]{BDTS}, we obtain $\Erw_{\pi}|B_{1}|^{\kappa}<\infty$. 
If $\kappa<1$, the same argument with $\kappa'=1$ yields $\Erw_{\pi}|B_{1}|^{\kappa}\le\Erw_{\pi}|B_{1}|<\infty$.

\vspace{.2cm}
Concerning \textsf{(B3)}, we observe that
$$\Prob\big[-\zeta_{T_{1}} - a_{J_{T_{1}}} + a_{J_{T_{0}}}\in d\ZZ\big]\,<\,1\quad\text{for all }d\in\R\text{ and }\{a_k:k\in\cS\}\subset\R,$$
holds if, for some pair $(i,j)\in\cS^2$,
$$\Prob\big[-\zeta^{(j)}_{T_{1}-} - Z^{ji}_{\zeta,1} -a_{i}+a_{j} \in d\ZZ\big]\,<\,1.$$
By independence of $-\zeta^{(j)}_{T_{1}-}$ and $Z^{ji}_{\zeta,1}$, this condition is clearly satisfied if either $\zeta^{(j)}$ has infinite variation (see \cite[Cor.~24.6]{sato2nd}) or if $\zeta^{(j)}$ has nonzero drift. In the case of pure-jump L\'evy processes $\zeta^{(j)}, j\in\cS$, the stated assumption that the jump sizes of some $\zeta^{(j)}$ are nonlattice, or that $Z^{ji}_{\zeta,1}$ has a nonlattice distribution, is sufficient for our purposes.\\
Lastly, since $\eta \not\equiv 0$, either there exists $j\in\cS$ such that $\eta^{(j)}\not\equiv 0$, or there exists a pair $(j,i)$ such that a transition of $J$ from $j$ to $i$ induces a nonzero jump in $\eta$ with positive probability. Both scenarios imply, for this $j$,
\begin{align*} 
\Prob_{j}[B_{1}=0]
\ &=\ \Prob_{j}\left[\e^{-\zeta_{T_{1}}}\int_{(0,T_{1})} \e^{\zeta_{s-}}\,\diff\eta^{(j)}_s + \e^{-\Delta \zeta_{T_{1}}} \Delta\eta_{T_{1}}\,=\,0 \right]\ <\ 1,
\end{align*}
as the jump size $\Delta \eta_{T_{1}}$ is independent of $T_{1}$ and of $(\zeta^{(j)},\eta^{(j)})$. Therefore, $\Prob_\pi[B_{1}=0]=\sum_{j\in\cS}  \pi_{j}\Prob_{j}[B_{1}=0]<1$, and Assumption \textsf{(B4)} of Theorem~\ref{thm:main_{1}} is satisfied.\\
Thus, Assumptions \textsf{(B1)}--\textsf{(B4)} of Theorem~\ref{thm:main_{1}} hold, and we conclude that
\begin{equation} \label{eq-zetatoinfty}
\Erw\zeta_{T_{1}}\,=\,- \Erw \log |A_{1}|\,>\,0,
\end{equation}
and that the stationary law $\nu$ of $(R_{n})$ has a power-law decay as stated in \eqref{eq3:left-right tail nu}. In particular, positivity of the sum of the constants follows, since degeneracy of the considered \MMLIFS~is equivalent to \eqref{eq-MMGOUdegenerate}, see \cite[Section 4.3]{BS_ExpFuncMAP}, which has been ruled out by assumption.  

\vspace{.1cm}
To show that $(V_{t})_{t\ge 0}$ admits a nontrivial stationary distribution, we recall from \cite[Thm.~3.3]{BSidMMGOU} that it suffices to prove that 
$\int_{(0,t]} \e^{\wh{\zeta}_{s-}}\,\diff \wh{L}_s$ converges in $\wh{\Prob}_{\pi}$-probability, as $t\to\infty$, to a finite-valued random variable. As $\cS$ is finite, this is equivalent to $\wh{\Prob}_\pi$-a.s. convergence by \cite[Rem.~4.2]{BS_ExpFuncMAP}. To prove the latter, note first that, by a generalization of Wald's equation (see, e.g., \cite[Cor.~XI.2.6]{Asmussen:03}), \eqref{eq-zetatoinfty} implies
$$ \Erw\zeta_{1}\,=\,\frac{\Erw\zeta_{T_{1}}}{\Erw T_{1}}\,>\,0, $$
and hence $\lim_{t\to \infty} \zeta_{t}=\infty$ $\Prob_\pi$-a.s. Together with \textsf{(A4)} we can therefore conclude that the long-term mean $\kappa_\zeta$ of $\zeta$ (see, e.g., \cite[Eq.~(3.6)]{BS_ExpFuncMAP} for its explicit definition) is positive and finite. From this, we immediately obtain for the dual process that $\lim_{t\to \infty} -\wh{\zeta}_{t}=\infty$ $\wh{\Prob}_{\pi}$-a.s. and $0<\kappa_{-\wh{\zeta}}<\infty$. Thus, by \cite[Prop.~5.2 and 5.7.1]{BS_ExpFuncMAP}, we obtain the desired convergence of $\int_{(0,t]} \e^{\wh{\zeta}_{s-}}\,\diff \wh{L}_s$ provided that
\begin{equation} \label{eq:intcriterion} 
\int_{(1,\infty)} \log(q)\ \Prob_j\Big[\sup_{0<t\leq \tau(j)} \e^{\wh{\zeta}_{t-}}\, |\Delta (-\wh{L}^\sharp_t)|\in\,\diff q\Big]\ <\ \infty,
\end{equation}
where $\tau(j):=\inf\{t>0: J_t=j\neq J_{t-}\}$ is the first return time of $J_t$ to $j$, and 
\begin{equation} \label{eq-Lbigjumps} 
\wh{L}^\sharp_t\ =\ \sum_{0<s\leq t} \Delta \wh{L}_s \mathds{1}_{|\Delta \wh{L}_s|>1} + \sum_{i,j\in\cS} \sum_{n: T_{n}\leq t} Z_{\wh{L},n}^{ij}
\end{equation}
is a finite-variation pure-jump process. 
By duality, we may replace $(\wh{\zeta}, \wh{L}^\sharp)$ in \eqref{eq:intcriterion} with $(-\zeta, -L^\sharp)$, where $L^\sharp$ is defined analogously to \eqref{eq-Lbigjumps}.
Further, from \eqref{eq-ULviaxieta},  
$$ \e^{-\zeta_{t-}} \Delta L_{t}\, =\, \e^{-\zeta_{t}} \Delta  \eta_{t}.$$
Thus, \eqref{eq:intcriterion} is equivalent to 
$$ \int_{(1,\infty)} \log(q)\, \Prob_j\bigg[\sup_{0<t\leq \tau(j)}\e^{-\zeta_{t}}\, |\Delta\eta^\sharp_t|\in\,\diff q\bigg]\ <\ \infty,$$
where $\eta^\sharp$ is a finite-variation pure-jump process defined analogously to \eqref{eq-Lbigjumps}. Now, by \cite[Cor.~3.9]{BDTS}, Assumption \textsf{(A3)} implies
\begin{gather*}
\Erw\log^{+}|\eta_{1}^{(j)}|\,<\,\infty\quad\text{for all }j\in\cS
\shortintertext{and}
\Erw\log^{+}|Z^{ij}_{\eta}|\,<\,\infty\quad\text{for all }i,j\in\cS\text{ with }q_{ij}>0,
\end{gather*}
while \textsf{(A4)} ensures finite moments for $\zeta$. Hence $(V_{t})_{t\ge0}$ admits a unique stationary law. As this law coincides with $\nu$, the proof is complete. \hfill\qedsymbol

\section*{Appendix. The occupation measure formula}

In the following, we provide a version of the \emph{occupation measure formula} that is not the most general one, but is tailored to our needs.

\vspace{.1cm}
Consider a probability space $(\Omega,\cF,\Prob)$ together with a filtration $\cF=(\cF_{n})_{n\ge 0}$ and a Markov chain $(M_{n},\cF_{n})_{n\ge 0}$ with general state space $(\cX,\fX)$. As before, we write $\Prob_{x}$ for $\Prob(\,\cdot\,|\,M_{0}=x)$ and define $\Prob_{\lambda}=\int_{\cX}\Prob_{x}\,\lambda(dx)$ for any probability measure $\lambda$ on $(\cX,\fX)$.
Let $\sigma\ge 1$ be a stopping time with respect to $\cF$ and $\lambda$ a distribution on $\cX$ such that
$$ \sfm\,:=\,\Erw_{\lambda}\sigma\,<\,\infty\quad\text{and}\quad\Prob_{\lambda}(M_{\sigma}\in\cdot)\,=\,\lambda. $$
Then the normalized occupation measure
\begin{equation}\label{eq:normalized occupation measure formula}
\pi(A)\ =\ \frac{1}{\sfm}\,\Erw_{\lambda}\Bigg[\sum_{n=1}^{\sigma}\1_{A}(M_{n})\Bigg],\quad A\in\fX
\end{equation}
defines a stationary distribution of $(M_{n})_{n\ge 0}$. Equivalently,
\begin{equation}\label{eq:occupation measure formula}
\Erw_{\lambda}\Bigg[\sum_{n=1}^{\sigma}h(M_{n})\Bigg]\ =\ \sfm\,\Erw_{\lambda}h(M_{0})\ =\ \sfm\int h(x)\,\pi(dx)
\end{equation}
for any measurable $h:\cS\times\R\to\R$ such that at least one of 
$\Erw^{(\theta)}h^{-}(M_{0})$ or $\Erw^{(\theta)}h^{+}(M_{0})$ is finite. The formulas remain valid when summing over $n=0,\dots,\sigma-1$.

\vspace{.1cm}
For the \MMLIFS\ $(\xi_{n},R_{n})_{n\ge 0}$, which is stationary Markov chain under $\Prob_{\pi}$, we can choose $\sigma=\tau(i):=\inf\{n\ge 1:\xi_{n}=i\}$ for arbitrary $i\in\cS$. Then $\sfm=\sfm_{i}:=\Erw_{i}\tau(i)<\infty$ by the positive recurrence of $(\xi_{n})_{n\ge 0}$. Hence, formula \eqref{eq:normalized occupation measure formula} provides
\begin{gather}
\Prob_{\pi}[(\xi_{0},R_{0})\in\cdot]\ =\ \frac{1}{\sfm_{i}}\,\Erw_{i}\Bigg[\sum_{n=0}^{\tau(i)-1}\1_{\{(\xi_{n},R_{n})\in\cdot\}}\Bigg],\label{eq2:normalized occupation measure formula}
\shortintertext{in particular}
\Prob_{\pi}[R_{0}\in\cdot]\ =\ \frac{1}{\sfm_{i}}\,\Erw_{i}\Bigg[\sum_{n=0}^{\tau(i)-1}\1_{\{R_{n}\in\cdot\}}\Bigg]\label{eq3:normalized occupation measure formula}
\end{gather}
for any $i\in\cS$.

\bibliographystyle{imsart-number}

\end{document}